\newcommand{\irr}{\operatorname{irr}}
\newcommand{\case}[1]{\paragraph*{Case #1:}}
\newtheorem{hypothesize}{Hypothesize}
\DeclarePairedDelimiter\ceil{\lceil}{\rceil}
\DeclarePairedDelimiter\floor{\lfloor}{\rfloor}
\newtheorem{theorem}{Theorem}[section]
\newtheorem{lemma}[theorem]{Lemma}
\newtheorem{proposition}{Proposition}[section]
\newtheorem{definition}{Definition}
\newtheorem{example}{Example}
\author{Jasem Hamoud}
\address{\textbf{Jasem Hamoud} 
Department of Discrete Mathematics, Moscow Institute of Physics and Technology
}
\email{khamud@phystech.edu}
\thanks{}
\author{Alexei Belov-Kanel}
\address{\textbf{Alexei Belov-Kanel} 
Department of Discrete Mathematics, Moscow Institute of Physics and Technology, Dolgoprudnyi, Institutskiy Pereulok,
141700 Moscow, Russia
}
\email{kanelster@gmail.com}
\thanks{}
\author{Duaa Abdullah}
\address{\textbf{Duaa Abdullah:} Department of Discrete Mathematics, Moscow Institute of Physics and Technology }
\email{abdulla.d@phystech.edu}
\thanks{}
\title{On Topological Indices in Trees: Fibonacci Degree Sequences and Bounds}
\date{}
\begin{document}
\begin{abstract}
In this paper, we have studied bounds based on topological indicators, from which we selected Albertson index $\mathrm{irr}$ and the Sigma index $\sigma$. The Sigma index was defined through the following relationship:
\[
\sigma(G)=\sum_{uv\in E(G)}\left( d_u(G)-d_v(G) \right)^2.
\]
 We establish a precise formula for the Albertson index of a tree $T$ of order $n$ with a Fibonacci degree sequence $\mathscr{D} = (F_3, \dots, F_n)$. Additionally, we derive bounds for the minimum and maximum Albertson indices ($\irr_{\min}$ and $\irr_{\max}$) across various tree structures. Propositions and lemmas provide upper and lower bounds, incorporating parameters such as the maximum degree $ \Delta$, minimum degree $\delta$. We further relate the Albertson index to the second Zagreb index $M_2(T)$ and the forgotten index $F(T)$, establishing a new upper bound.

\end{abstract}

\maketitle

\noindent\rule{12.7cm}{1.0pt}

\noindent
\textbf{Keywords:} Trees, Sequence, Zagreb, Topological, Fibonacci, Isomorphic.

\medskip

\noindent
%{\bf MSC 2020:} 05C05, 05C12, 05C20, 05C25, 05C35, 05C76, 68R10.

\medskip

\noindent
{\bf MSC 2010:} 05C05, 05C12, 05C35, 68R10.

\noindent\rule{12.7cm}{1.0pt}

\section{Introduction}\label{sec1}

Throughout this paper. Let $G=(V,E)$ be a simple, connected graph, where $n=|V(G)|$, $m=|E(G)|$,  the minimum and the maximum degree of $G$ are denoted by $\delta, \Delta$ where $1\leqslant \delta <\Delta \leq n-1$. Let $\mathcal{P}_n$ be a path of order $n$ and $\mathcal{S}_n$ the star tree of order $n$, typically denoted by $K_{1,n-1}$. Two graphs $G$ and $H$ are deemed identical if their vertex sets are equal, $V(G) = V(H)$, and their edge sets are equal, $E(G) = E(H)$, or equivalently, if $G \subseteq H$ and $H \subseteq G$. The graphs $G$ and $H$ are said to be isomorphic~\cite{HaynesHedetniemiSlater1998}, denoted $G \cong H$, if there exists a bijection $\psi : V(G) \to V(H)$ such that an edge $uv \in E(G)$ exists if and only if the edge $\psi(u)\psi(v) \in E(H)$. A subset $S \subseteq V$ of vertices in a graph $G = (V, E)$ is termed a dominating set~\cite{HaynesHedetniemiSlater1998} if every vertex $v \in V$ either belongs to $S$ or is adjacent to at least one vertex in $S$.
Let $e=u v \in E(G)$ an edge, then the imbalance of $e$ known as $\left|d_C(u)-d_G(v)\right|$ it is employee in ``Albertson Index''.  Dorjsembe,S., et al. in~\cite{Dorjsembe2023GutmanLI} for a path $u_0 u_1 \cdots u_t$ in graph $G$, then $imb_G\left(u_0, u_t\right)=\sum_{i=0}^{t-1}\left|d_G\left(u_i\right)-d_G\left(u_{i+1}\right)\right|$. In 1997, Albertson in~\cite{Albertson1997} mention to the imbalance of an edge $uv$ by $imb(uv)$ where we considered a graph $G$ is regular if all of its vertices have the exact same degree, then the irregularity measure defined in~\cite{Albertson1997, GutmanHansenMelot2005, AbdoBrandtDimitrov,Bell1992} as: 
\begin{equation}~\label{eqsec1n2}
    \operatorname{irr}(G)=\sum_{uv\in E(G)}\lvert d_u(G)-d_v(G) \rvert.
\end{equation}
Denote by $\irr_{\max}$ the maximum of Albertson index and $\irr_{\min}$ the minimum of Albertson index.  Ali, A. et al in~\cite{AliDimitrovEtAl2025} mention to total irregularity of $\mathscr{D}(G)=(d_1,d_2,\dots,d_i)$ a degree sequence of $G$ where $d_1\ge d_2\ge\cdots\ge d_n$, it defined in Definition~\ref{degreeseq}, then we have: 
\begin{equation}~\label{eqsec1n3}
    \operatorname{irr}(G)=2(n+1)m - 2 \sum_{i=1}^n id_i.
\end{equation}
\begin{definition}[Degree Sequence~\cite{AshrafiGhalavand,ZhangZhangGrayWang2013,LinaZhoubMiaob2024,Andriantiana2013Wagner,MolloyReed1995}]~\label{degreeseq}
Let $G=(V,E)$ be a simple graph, where $V=\{v_{1}, v_{2}, \ldots, v_{n}\}$, let $\mathscr{D}(G)=\left(d_{G}\left(v_{1}\right), d_{G}\left(v_{2}\right), \ldots, d_{G}\left(v_{n}\right)\right)$ be a degree sequence of $G$ where $d_{G}\left(v_{1}\right) \geqslant d_{G}\left(v_{2}\right) \geqslant \cdots \geqslant d_{G}\left(v_{n}\right)$. When $\mathscr{D}(G)=(k, k, \ldots, k)$, then $G$ is regular of degree $k$. Otherwise, the graph is irregular
\end{definition}
Let $\mathscr{D}=(d_1,\dots,d_n)$ be a non-increasing degree sequence with $n\geq 2$. There are a graph $G_1$ where $V(G_1)=\{v_2,\dots,v_n\}$, then, the degree vertices~\cite{Chartrand2012Zhang} are
\begin{equation}~\label{eqGraphican1}
    \deg_{G_1} v_i = 
\begin{cases}
    d_i - 1 & \text{if } 2 \leq i \leq d_1 + 1, \\
    d_i     & \text{if } d_1 + 2 \leq i \leq n.
\end{cases}
\end{equation}
We assemble a graph $G$ from $G_1$ if combined a new vertex $v_1$, and the $e_1$ edges $v_1 v_i$ for $2 \leq i \leq d_1 + 1$. by considering $\deg_G v_i = d_i$ for $1 \leq i \leq n$, if $d_1\geq 1$, graphical if and only if the sequence $\mathscr{B}=(d_2-1,d_3-1,\dots,d_{e_1+1}-1, d_{e_1+2},\dots,d_n)$ is graphical.

Ghalavand, A. et al. In~\cite{GhalavandAshrafiR2023} proved  $\operatorname{irr}_T(G) \leq \frac{n^2}{4} \operatorname{irr}(G)$ when the bound is sharp for infinitely many graphs.  The first and the second Zagreb index, $M_1(G)$ and $M_2(G)$ are defined in~\cite{GutmanTrinajstic1972, GutmanTrinajsticWilcox1975,Albertson1997} as: 
\[
M_1(G)=\sum_{i=1}^{n}d_i^2, \quad \text{and} \quad M_2(G)=\sum_{uv\in E(G)} d_u(G)d_v(G).
\]
Actually an alternative expressions introduced by M. Matej\'i et al. in~\cite{Albertson1997, NikolicKovacevicMilicevicTrinajstic2003} for the first Zagreb index  as $M_1(G)=\sum_{u\sim v}(d_u+d_v)$.  The recently introduced $\sigma(G)$ irregularity index is a simple diversification of the previously established Albertson irregularity index, in~\cite{GutmanToganYurttas2016,AbdoDimitrovGutman} defined as: 
\[
\sigma(G)=\sum_{uv\in E(G)}\left( d_u(G)-d_v(G) \right)^2.
\]
Denote by $\sigma_{\max}$ the maximum of Sigma index and $\sigma_{\min}$ the minimum of Sigma index.

Our goal of this paper, conduct a study on the bounds related to the selected topological indices, specifically Albertson index and the Sigma index, particularly when examining the maximum and minimum values of each index and linking these values with the first Zagreb index, the second Zagreb index and the forgotten index.

This paper is organized as follows. In Section~\ref{sec1}, we observe the important concepts to our work including literature view of most related papers, in Section~\ref{sec2} we have provided a preface through some of the important theories and properties we have utilised in understanding the work, in Section~\ref{sec3} we introduced the main result throughout the subsection~\ref{subsec3} for some bounds of Albertson index and the subsection~\ref{sub2sec3} for some bounds of Sigma index. 
%==================================
 \section{Preliminaries}\label{sec2}
%==================================
In this section, we explore important concepts and recent advancements in graph irregularity indices, with a particular emphasis on trees.  Let $\mathcal{S}$ be a class of graphs, then we have $\operatorname{irr}_{\max},\operatorname{irr}_{\min}$ Albertson index of a graph $G$, where: \begin{gather*} \operatorname{irr}_{\max}(\mathcal{S}) =\max \{\operatorname{irr}(G)\mid G\in \mathcal{S}\}, \\
\operatorname{irr}_{\min}(\mathcal{S}) =\min \{\operatorname{irr}(G)\mid G\in \mathcal{S}\}.
\end{gather*}
In~\cite{GhalavandAshrafiR2023} mention to total irregularity when $\sum_{i\geq 1}^{} n_i=|V(G)|$ where $|V(G)|$ is the number of vertices of degree $i$ in graph $G$ it is know as $\operatorname{irr}_T(G)=\sum_{1 \leq i<j \leq n-1} n_i n_j(j-i)$. So that in \cite{GhalavandAshrafiR2023} proved  $\operatorname{irr}_T(G) \leq \frac{n^2}{4} \operatorname{irr}(G)$. In this case, if $G$ is a tree, then $\operatorname{irr}_T(G) \leq(n-2) \operatorname{irr}(G)$. Define the ``general Albertson irregularity index'' of a graph in \cite{LinaZhoubMiaob2024} by Lina, Z., et al. such that for any number $p>0$ then: 
\[
\operatorname{irr}_p(G) = \left(\sum\limits_{uv\in E(G)}|d(u)-d(v)|^p\right)^{\frac{1}{p}}.
\]
This relationship had employed in Lemma~\ref{lemmastep2Albertsonn1} and \ref{lemmastep2Sigman1}. 
\begin{theorem}~\cite{LinaZhoubMiaob2024}
Let $G$ be a connected graph. When $\frac{1}{p} + \frac{1}{q} = 1$, then
\[
\irr_p(G) \irr_q(G) \geq F - 2M_2,
\]
with equality if and only if $p = 2$, or $G$ is a regular graph.
\end{theorem}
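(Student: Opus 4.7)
The plan is to recognize the right-hand side as the Sigma index and then apply H\"older's inequality in its most direct form. First I would observe that the forgotten index combined with the second Zagreb index admits the simple edge-sum identity
\[
F(G) - 2M_2(G) \;=\; \sum_{uv\in E(G)}\bigl(d(u)^2+d(v)^2\bigr) - 2\sum_{uv\in E(G)} d(u)d(v) \;=\; \sum_{uv\in E(G)}\bigl(d(u)-d(v)\bigr)^2 \;=\; \sigma(G).
\]
So the claim reduces to establishing $\irr_p(G)\,\irr_q(G)\ge \sigma(G)$, which is a natural H\"older-type statement relating the $p$-weighted and $q$-weighted edge-imbalance norms.

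The key step is then to write $\sigma(G)=\sum_{uv\in E(G)} |d(u)-d(v)|\cdot |d(u)-d(v)|$ and apply H\"older's inequality with conjugate exponents $p$ and $q$ (so $\tfrac{1}{p}+\tfrac{1}{q}=1$) to the two identical edge-indexed sequences $\bigl(|d(u)-d(v)|\bigr)_{uv\in E(G)}$. This yields
\[
\sigma(G) \;\le\; \Bigl(\sum_{uv\in E(G)} |d(u)-d(v)|^{p}\Bigr)^{1/p}\Bigl(\sum_{uv\in E(G)} |d(u)-d(v)|^{q}\Bigr)^{1/q} \;=\; \irr_p(G)\,\irr_q(G),
\]
and substituting the identity of the first paragraph closes the inequality $\irr_p(G)\,\irr_q(G)\ge F(G)-2M_2(G)$. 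This part is essentially a one-line application of a classical inequality once the identity $\sigma=F-2M_2$ is in place.

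The more delicate step is the equality characterization. By the equality clause of H\"older's inequality, equality holds iff the sequences $\bigl(|d(u)-d(v)|^{p}\bigr)_{uv}$ and $\bigl(|d(u)-d(v)|^{q}\bigr)_{uv}$ are proportional, i.e.\ there exist constants, not both zero, making $\alpha\,|d(u)-d(v)|^{p} = \beta\,|d(u)-d(v)|^{q}$ on every edge. I expect to split into two cases: (i) if $p=q$, then forced by $\tfrac{1}{p}+\tfrac{1}{q}=1$ we get $p=2$, and the proportionality is automatic; (ii) if $p\neq q$, then on every edge either $|d(u)-d(v)|=0$ or $|d(u)-d(v)|^{p-q}$ is a fixed positive constant. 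For the cleanest statement, the only case where this forces \emph{every} edge to behave uniformly in a tree-/graph-theoretic sense is when all imbalances vanish, i.e.\ $G$ is regular; this is the obstacle, since one must rule out (or acknowledge) the pathological case of a graph in which every edge has the same nonzero imbalance. I would therefore record the cases $p=2$ and $G$ regular as the two canonical equality situations, as in the statement.
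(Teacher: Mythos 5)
The paper does not prove this statement at all --- it is quoted in the Preliminaries as a cited result from the Lin--Zhou--Miao paper --- so there is no internal proof to compare against. Your route is the standard one for this result: the identity $F(G)-2M_2(G)=\sum_{uv\in E(G)}(d_u-d_v)^2=\sigma(G)$, followed by H\"older's inequality with conjugate exponents applied to the two copies of the edge-imbalance sequence. That part is correct (note it implicitly requires $p,q>1$, which is what $\tfrac1p+\tfrac1q=1$ with positive exponents forces), and it establishes $\irr_p(G)\,\irr_q(G)\ge F(G)-2M_2(G)$ cleanly.

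The genuine gap is the equality clause, and you have half-acknowledged it without resolving it. The H\"older equality condition you state gives: either $p=q=2$, or on every edge the imbalance is $0$ or equal to one fixed positive constant. That is strictly weaker than ``$G$ is regular,'' and the discrepancy is not a pathology you can wave away: for $\mathcal{P}_3$ (degrees $1,2,1$) every edge has imbalance $1$, so $\irr_p\,\irr_q=2^{1/p}\cdot 2^{1/q}=2=\sigma=F-2M_2$ for \emph{every} conjugate pair $(p,q)$, while $\mathcal{P}_3$ is not regular and $p$ need not be $2$. Hence the ``only if'' direction of the characterization as quoted cannot be proved; the correct equality condition coming out of your own argument is ``$p=2$ or $|d_u-d_v|$ is constant over all edges of $G$'' (regularity being the special case where that constant is zero). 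Since the theorem is imported from the literature, this is arguably a defect of the statement as transcribed here as much as of your proof, but as written your proposal leaves the stated ``if and only if'' unestablished, and simply ``recording'' the two canonical cases does not close it.
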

According to~\cite{LinaZhoubMiaob2024, ClarkSzekelyEntringer1992}, if $p \geq 1$  where $0\leq d\leq n-1$. Then, for a degree sequence $\mathscr{D}$ satisfy 
\begin{equation}~\label{eq1paper}
    \left(\sum_{i=1}^n d_i^p\right)^{\frac{1}{p}} \leq(n-1)^{1-\frac{1}{p}} \sum_{i=1}^n d_i^{\frac{1}{p}}
\end{equation}
\begin{theorem}~\cite{AliDimitrovEtAl2025}
Let $T$ be a tree of order $n\geq 4$, the total Albertson index among $T$ satisfy:
\[
\irr_t(G)=\begin{cases}
   \irr_{t_{\max}}(G) =(n-1)(n-2), \\
   \irr_{t_{\min}}(G)=2(n-2).
\end{cases}
\]
\end{theorem}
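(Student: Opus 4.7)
The plan is to reduce the two bounds to an optimization over admissible tree degree sequences. For a tree $T$ on $n$ vertices with degrees listed in non-increasing order $d_1\ge d_2\ge\cdots\ge d_n$, I would first recast the total irregularity in a form that is linear in the sorted degrees:
\[
\operatorname{irr}_T(T)=\sum_{1\le i<j\le n-1} n_i n_j (j-i)=\sum_{i=1}^{n}(n-2i+1)\,d_i.
\]
This identity is obtained by noting that the left-hand sum equals $\sum_{\{u,v\}} |d(u)-d(v)|$ and then telescoping the pairwise differences in sorted order. The tree constraints enter as $d_i\in\{1,\ldots,n-1\}$ and $\sum_i d_i=2(n-1)$. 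Shifting to $e_i:=d_i-1\ge 0$ (still non-increasing, with $\sum_i e_i = n-2$) and using $\sum_{i=1}^{n}(n-2i+1)=0$, the identity becomes
\[
\operatorname{irr}_T(T)=\sum_{i=1}^{n}(n-2i+1)\,e_i.
\]

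For $\operatorname{irr}_{t_{\max}}$ the argument is immediate: since each coefficient $(n-2i+1)$ is at most $n-1$ and every $e_i\ge 0$,
\[
\operatorname{irr}_T(T)\le(n-1)\sum_{i=1}^{n}e_i=(n-1)(n-2),
\]
with equality forcing $e_i=0$ for every $i\ge 2$, i.e.\ $d=(n-1,1,\ldots,1)$; the unique tree realizing this degree sequence is the star $\mathcal{S}_n$.

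For $\operatorname{irr}_{t_{\min}}$ I would apply Abel summation to the partial sums $E_i=e_1+\cdots+e_i$. Because the coefficients drop by the constant $2$ at each step and $c_n=1-n$, the identity rearranges to
\[
\operatorname{irr}_T(T)=(1-n)(n-2)+2\sum_{i=1}^{n-1}E_i.
\]
The sharp estimate needed is $E_i\ge\min(i,n-2)$, which follows by a short induction using only that $(e_i)$ is a non-increasing integer sequence of nonnegative entries with total $n-2$: either $e_j\ge 1$ for all $j\le i$, so $E_i\ge i$, or the support of $(e_i)$ has already terminated by index $i$, in which case $E_i=n-2$. Summing this bound yields $\sum_{i=1}^{n-1}E_i\ge\tfrac{(n-2)(n+1)}{2}$, whence $\operatorname{irr}_T(T)\ge 2(n-2)$, with equality precisely when $e_i=1$ for $i\le n-2$ and $e_i=0$ otherwise. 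The only tree with degree sequence $(2,\ldots,2,1,1)$ is the path $\mathcal{P}_n$.

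The main obstacle is the lower-bound direction: the maximum falls out of a one-line coefficient estimate, but the minimum requires quantifying how \emph{spread out} a non-increasing integer sequence of fixed total can be. Via the Abel-summation reformulation this reduces to the clean inequality $E_i\ge\min(i,n-2)$, which is the precise analytic statement that among all tree degree sequences the path is the most balanced.
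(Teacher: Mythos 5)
Your proposal is correct, and in fact it supplies something the paper itself does not: this theorem is only quoted from the reference of Ali et al.\ in the preliminaries, with no proof given in the manuscript, so there is no internal argument to compare yours against. I checked your steps: the linearization $\operatorname{irr}_T(T)=\sum_{i=1}^{n}(n-2i+1)d_i$ for sorted degrees is valid (each $d_i$ gains a plus sign $n-i$ times and a minus sign $i-1$ times), and it is exactly the identity the paper records as $\operatorname{irr}(G)=2(n+1)m-2\sum_i i\,d_i$ in~\eqref{eqsec1n3}; the shift $e_i=d_i-1$ with $\sum_i e_i=n-2$ uses only $\sum_i d_i=2(n-1)$ and $\sum_i(n-2i+1)=0$; the coefficient estimate $n-2i+1\leq n-1$ together with $e_i\geq 0$ gives the upper bound $(n-1)(n-2)$ with equality exactly for the star; and the Abel-summation identity $\operatorname{irr}_T(T)=(1-n)(n-2)+2\sum_{i=1}^{n-1}E_i$ combined with $E_i\geq\min(i,n-2)$ (which does follow from nonnegativity, integrality and monotonicity of $(e_i)$, by the two-case argument you give) yields $\sum_{i=1}^{n-1}E_i\geq\tfrac{(n-2)(n+1)}{2}$ and hence the lower bound $2(n-2)$, attained only by the degree sequence $(2,\dots,2,1,1)$, i.e.\ the path. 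The only point worth making explicit is realizability: every positive integer sequence summing to $2(n-1)$ is the degree sequence of some tree, which you implicitly use when asserting that the extremal sequences are achieved by $\mathcal{S}_n$ and $\mathcal{P}_n$; since both trees exist for every $n\geq 4$, this is harmless. Your approach is elementary and sharper than what the present paper needs, since it also characterizes the extremal trees.
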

\begin{proposition}[Upper Bound Sigma index~\cite{AbdoDimitrovGutman}]
 For any graph connected $G$ with $n$ vertices with minimal degree $\delta$, maximal degree $\Delta$, and maximal $\sigma$ irregularity. Then, the upper bound of $\sigma$ as:
\begin{equation}~\label{eq5BoundsonGraph}
\sigma(G)>\frac{\delta}{\Delta+1}(\Delta-\delta)^3 n, \quad \sigma(G)>\frac{1}{\Delta+1}(\Delta-1)^3 n.
\end{equation}
 \end{proposition}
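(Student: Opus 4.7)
The plan is to prove both inequalities by exhibiting a specific connected graph $H$ in the prescribed class --- $n$ vertices, minimum degree $\delta$, maximum degree $\Delta$ --- whose $\sigma$-value already exceeds the right-hand side. Since $G$ is assumed to maximize $\sigma$ within this class, $\sigma(G)\geq\sigma(H)$ is automatic, and the proposition reduces to one explicit construction together with a clean count of contributions.

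The candidate construction I would adopt is built from the complete bipartite gadget $K_{\delta,\Delta}$, whose parts have sizes $\delta$ and $\Delta$. It has $\delta+\Delta$ vertices, exactly $\delta\Delta$ edges, and every edge has endpoints of degrees $\delta$ and $\Delta$, contributing $\delta\Delta(\Delta-\delta)^2$ to the $\sigma$-sum. Take $k=\lfloor n/(\delta+\Delta)\rfloor$ vertex-disjoint copies of this gadget, stitch them together with a minimal number of bridging edges between vertices whose degrees can tolerate an increment (so that the two extremes $\delta$ and $\Delta$ are preserved), and absorb the $n-k(\delta+\Delta)$ residual vertices into a single gadget so that the minimum degree stays at $\delta$. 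The result is a connected graph $H$ on $n$ vertices with $\delta(H)=\delta$ and $\Delta(H)=\Delta$.

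Counting only the maximum-imbalance contributions from the gadgets themselves,
\[
\sigma(H)\;\geq\;k\,\delta\Delta(\Delta-\delta)^2\;\geq\;\frac{n\,\delta\Delta(\Delta-\delta)^2}{\delta+\Delta}\;>\;\frac{\delta\,n\,(\Delta-\delta)^3}{\Delta+1},
\]
where the last step uses the elementary inequality $\Delta(\Delta+1)\geq(\delta+\Delta)(\Delta-\delta)$, equivalent to $\Delta+\delta^2\geq 0$. Strictness is preserved since the bridging edges contribute additional positive terms to $\sigma$ that have been discarded. The maximality of $G$ then gives the first bound, and the second is simply the specialization $\delta=1$, in which the $K_{\delta,\Delta}$ gadgets degenerate to stars $K_{1,\Delta}$ and the same counting carries through.

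The main obstacle is the combinatorial bookkeeping in the construction: ensuring simultaneously that $|V(H)|=n$, $\delta(H)=\delta$, and $\Delta(H)=\Delta$ while chaining the gadgets into one connected graph and absorbing the residue $n-k(\delta+\Delta)$ without destroying the count of high-imbalance edges. The divisibility issue $(\delta+\Delta)\nmid n$ and the degenerate boundary $\Delta=\delta$ (where both sides vanish and the strict inequality is vacuous) should also be treated separately; a mild hypothesis such as $n\geq\delta+\Delta+1$ may be needed to guarantee strictness in the tightest cases.
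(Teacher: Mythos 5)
Your overall strategy --- exhibit an explicit $n$-vertex comparison graph $H$ rich in edges of imbalance $\Delta-\delta$ and then invoke the maximality of $G$ --- is reasonable (and note that the paper itself only quotes this proposition from \cite{AbdoDimitrovGutman} without proof, so there is no in-paper argument to compare against), but as written the argument has concrete gaps. First, the middle step of your displayed chain is backwards: $k=\lfloor n/(\delta+\Delta)\rfloor\le n/(\delta+\Delta)$, so $k\,\delta\Delta(\Delta-\delta)^2\ge n\,\delta\Delta(\Delta-\delta)^2/(\delta+\Delta)$ fails whenever $(\delta+\Delta)\nmid n$; you flag divisibility as ``bookkeeping,'' but the inequality you actually wrote is false, and repairing it forces you to credit the contributions of the $n-k(\delta+\Delta)$ absorbed vertices and bridging edges that you explicitly discard. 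Second, the bridging edges raise the degree of each bridge endpoint from $\delta$ to $\delta+1$, so the $\delta$ gadget edges at that vertex drop from imbalance $\Delta-\delta$ to $\Delta-\delta-1$; hence even the first inequality $\sigma(H)\ge k\,\delta\Delta(\Delta-\delta)^2$ is not justified as stated, and the total loss (of order $(k-1)\,\delta\,(\Delta-\delta)$) must be shown to be absorbed by the slack $\Delta+\delta^2>0$ in your algebraic step. Third, the construction needs $n\ge\delta+\Delta$ even to place a single copy of $K_{\delta,\Delta}$, a hypothesis absent from the proposition; for instance with $\Delta=n-1$ and $\delta\ge2$ (and $\Delta=n-1$ is exactly the regime relevant to maximal-$\sigma$ graphs) no gadget fits and your argument produces nothing.

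Finally, the second inequality is not ``the specialization $\delta=1$.'' The extremal graph $G$ comes with whatever minimum degree it has, and under your own framing (maximality within the class of connected $n$-vertex graphs with prescribed $\delta$ and $\Delta$) a star-gadget graph of minimum degree $1$ is not an admissible comparison graph when $\delta\ge2$; nor does the first bound imply the second, since for $\Delta=3$, $\delta=2$ one has $\delta(\Delta-\delta)^3=2<8=(\Delta-1)^3$. You would need either to justify that the maximization is taken over all connected $n$-vertex graphs (so that star gadgets are admissible), or to give a separate construction for the second bound, in addition to fixing the floor and bridging issues above. The degenerate case $\Delta=\delta$ is also not merely ``vacuous'': there $\sigma(G)=0$ and the strict inequalities fail, so the regular case must be excluded by hypothesis rather than waved off.
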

Yang J, Deng H., M.,~\cite{YangDeng2023} provide $1\leq p \leq n-3$. Then: $\Delta(G)=n-1$.  For Caterpillar tree with path vertices with degrees $d_1,d_2,\dots , d_n$, we have:
 \[\irr(G)=\left( {{d_n} - 1} \right)^2 + \left( {d_1 - 1} \right)^2 + \sum\limits_{i = 2}^{n - 1} {\left( {{d_i} - 1} \right)\left( {{d_i} - 2} \right)} +\sum_{i=1}^{n-1}|d_i-d_{i+1}|.\]
\begin{lemma}~\cite{Andriantiana2013Wagner}
Let $\mathscr{D}_1=(x_1,\dots,x_n)$,$\mathscr{D}_2=(y_1,\dots,y_n)$,$\mathscr{B}_1=(a_1,\dots,a_n)$ and $\mathscr{B}_2=(b_1,\dots,b_n)$ be a  non-increasing degrees sequences, if $\mathscr{B}_1\preccurlyeq  \mathscr{D}_1$ and $\mathscr{B}_2\preccurlyeq  \mathscr{D}_2$, then 
\begin{equation}~\label{Majorization2}
(a_1b_1,\dots,a_nb_n)\preccurlyeq (x_1y_1,\dots,x_ny_n).
\end{equation}
\end{lemma}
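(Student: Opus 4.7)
The plan is to establish the (weak) majorization $(a_i b_i)_{i=1}^n \preccurlyeq (x_i y_i)_{i=1}^n$ directly from the definition, by verifying that $\sum_{i=1}^k a_i b_i \leq \sum_{i=1}^k x_i y_i$ for every $k \in \{1,\ldots,n\}$. Because each of $a,b,x,y$ is non-negative and non-increasing, the componentwise products $(a_i b_i)$ and $(x_i y_i)$ are themselves non-increasing, so the partial-sum criterion can be applied to the sequences as given without first passing to a rearrangement.

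The core device will be a double application of Abel summation, routed through the ``mixed'' partial sum $\sum_{i=1}^k x_i b_i$, which consumes one of the two hypotheses on each pass. Fix $k$ and set $A_i=a_1+\cdots+a_i$, $X_i=x_1+\cdots+x_i$. Summation by parts gives
\[
\sum_{i=1}^k a_i b_i \;=\; A_k b_k + \sum_{i=1}^{k-1} A_i(b_i-b_{i+1}).
\]
Since $b$ is non-increasing, each factor $(b_i-b_{i+1})$ is $\geq 0$; by $\mathscr{B}_1\preccurlyeq\mathscr{D}_1$ we have $A_i\leq X_i$ for all $i$. Replacing $A_i$ by $X_i$ and then inverting the Abel step yields the intermediate inequality $\sum_{i=1}^k a_i b_i \leq \sum_{i=1}^k x_i b_i$.

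Applying the same manoeuvre in the second coordinate, with $B_i=b_1+\cdots+b_i$ and $Y_i=y_1+\cdots+y_i$, Abel summation gives
\[
\sum_{i=1}^k x_i b_i \;=\; B_k x_k + \sum_{i=1}^{k-1} B_i(x_i-x_{i+1}),
\]
and invoking $\mathscr{B}_2\preccurlyeq\mathscr{D}_2$ together with $x_i - x_{i+1}\geq 0$ gives $\sum_{i=1}^k x_i b_i \leq \sum_{i=1}^k x_i y_i$. Chaining the two estimates delivers the desired partial-sum inequality for every $k$, which is precisely~\eqref{Majorization2}.

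The expected difficulty is more bookkeeping than conceptual: the Abel summation trick only preserves the inequality because the difference factors $(b_i-b_{i+1})$ and $(x_i-x_{i+1})$ are non-negative, so the non-increasing hypothesis on all four sequences is genuinely essential and cannot be weakened to mere non-negativity. A related caveat worth flagging is that the product majorization here is of the \emph{weak} kind---one can arrange $\sum_{i=1}^n a_i b_i < \sum_{i=1}^n x_i y_i$ strictly even when the hypotheses include equality at $k=n$ (for instance with $\mathscr{B}_1=\mathscr{B}_2=(2,2)$ and $\mathscr{D}_1=\mathscr{D}_2=(3,1)$)---so the symbol $\preccurlyeq$ in the conclusion must be read in the weak sense.
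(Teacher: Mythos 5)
The paper itself offers no proof of this lemma---it is imported verbatim from the cited reference [Andriantiana--Wagner]---so there is nothing internal to compare against; judged on its own, your argument is correct and is essentially the standard proof of this product-majorization fact. The two Abel summations are set up properly: in the first pass the boundary term needs $b_k\geq 0$ and the differences $b_i-b_{i+1}\geq 0$, in the second pass you need $x_k\geq 0$ and $x_i-x_{i+1}\geq 0$, and in both passes the partial-sum dominations $A_i\leq X_i$, $B_i\leq Y_i$ are exactly what the hypotheses $\mathscr{B}_1\preccurlyeq\mathscr{D}_1$, $\mathscr{B}_2\preccurlyeq\mathscr{D}_2$ supply; chaining through the mixed sum $\sum_{i\leq k}x_ib_i$ gives $\sum_{i\leq k}a_ib_i\leq\sum_{i\leq k}x_iy_i$ for every $k$. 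Your preliminary remark that non-negativity plus monotonicity makes the product sequences $(a_ib_i)$ and $(x_iy_i)$ already non-increasing is also needed and correctly disposed of, since majorization is defined on sorted sequences. Most importantly, your caveat is exactly right and worth keeping: even when the hypotheses include equality of the total sums, the totals of the products need not agree (your example $(2,2)$ versus $(3,1)$ shows strict inequality), so the conclusion can only hold in the weak (sub)majorization sense, and the symbol $\preccurlyeq$ in the displayed relation~\eqref{Majorization2} must be read that way---a point the paper's bare statement glosses over. With that reading your proof is complete; no gap.
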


\begin{lemma}\cite{NasiriFathTabarGutman2013}~\label{lem1}
	The star $S_n$  is the only tree of order n that has a great deal of irregularity, satisfying:
	\[\operatorname{irr}\left( {{S_n}} \right) = \left( {n - 2} \right)\left( {n - 1} \right)\].
\end{lemma}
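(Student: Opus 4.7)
The plan is to handle the lemma in two steps: compute $\operatorname{irr}(S_n)$ directly, then certify this value as the maximum over all $n$-vertex trees. The first step is immediate: $S_n \cong K_{1,n-1}$ has one center of degree $n-1$, $n-1$ leaves of degree $1$, and each of its $n-1$ edges contributes imbalance $|n-1-1| = n-2$, giving $\operatorname{irr}(S_n) = (n-1)(n-2)$.

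For the upper bound over an arbitrary tree $T$ on $n$ vertices, I would exploit the observation that every edge $uv$ of a connected graph satisfies $d_u, d_v \geq 1$, so
\[
|d_u - d_v| = \max(d_u,d_v) - \min(d_u,d_v) \leq d_u + d_v - 2,
\]
with equality if and only if $\min(d_u,d_v) = 1$. Summing over the $n-1$ edges of $T$ and recognising $\sum_{uv \in E(T)}(d_u + d_v) = \sum_{v \in V(T)} d_v^2 = M_1(T)$ yields the intermediate bound $\operatorname{irr}(T) \leq M_1(T) - 2(n-1)$.

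Next I would bound $M_1(T)$ sharply. Setting $e_v := d_v - 1 \geq 0$, the handshake identity gives $\sum_v e_v = 2(n-1) - n = n-2$, and expanding the square produces $M_1(T) = \sum_v e_v^2 + 3n - 4$. For non-negative reals one has $\sum_v e_v^2 \leq \bigl(\sum_v e_v\bigr)^2 = (n-2)^2$, with equality only when a single $e_v$ absorbs all the mass. Hence $M_1(T) \leq (n-2)^2 + 3n - 4 = n(n-1)$, and equality forces the degree sequence to be $(n-1,1,1,\dots,1)$, which is realised uniquely by $S_n$ since the vertex of degree $n-1$ must be adjacent to every other vertex.

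Chaining the two bounds yields $\operatorname{irr}(T) \leq n(n-1) - 2(n-1) = (n-1)(n-2)$, which establishes the extremal value. The only step that requires mild care is the equality discussion --- ensuring the chain of inequalities is simultaneously tight only for $T \cong S_n$ --- but this is transparent from the sharp $M_1$ bound, whose equality clause already singles out the star degree sequence and hence the star itself. The rest of the argument reduces to elementary inequalities, so I do not anticipate any serious obstacle.
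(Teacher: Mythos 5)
Your argument is correct, and it is worth noting that the paper itself gives no proof of this statement: Lemma~\ref{lem1} is simply quoted from the cited reference of Nasiri, Fath-Tabar and Gutman, so there is no in-paper argument to compare against. What your route buys is a short, self-contained derivation: the edgewise estimate $|d_u-d_v|\le d_u+d_v-2$ (tight exactly when one endpoint is a leaf) gives $\operatorname{irr}(T)\le M_1(T)-2(n-1)$, and the substitution $e_v=d_v-1$ with $\sum_v e_v=n-2$ together with $\sum_v e_v^2\le\bigl(\sum_v e_v\bigr)^2$ gives the sharp bound $M_1(T)\le n(n-1)$ for any $n$-vertex tree, whose equality case pins down the degree sequence $(n-1,1,\dots,1)$ and hence the star; chaining the two yields $\operatorname{irr}(T)\le(n-1)(n-2)$ with equality only for $S_n$, matching the direct computation $\operatorname{irr}(S_n)=(n-1)(n-2)$. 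Your uniqueness discussion is adequate as written: if $\operatorname{irr}(T)=(n-1)(n-2)$ then the $M_1$ inequality alone is forced to be tight, which already identifies $T\cong S_n$, so you do not even need to invoke tightness of the edgewise step (though for $n=2$ the statement is the degenerate $\operatorname{irr}(S_2)=0$ and the ``only tree'' claim is vacuous, so one should read the lemma with $n\ge 3$). I see no gap.
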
 
\begin{lemma}~\label{lem2}
Let be $\mathcal{T}$ a classes of trees, then $\sigma_{max}, \sigma_{min}$ in trees with $n$ vertices by:
 \[\sigma(\mathcal{T})  = \left\{ \begin{array}{l}
 	{\sigma _{\max }(\mathcal{T})} = \left( {n - 1} \right)\left( {n - 2} \right){\rm{        }};n \ge 3\\
 	{\sigma _{\min }(\mathcal{T})} = 0{\rm{                             ; n = 2}}
 \end{array} \right.\].
 \end{lemma}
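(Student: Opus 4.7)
The plan is to treat the minimum and maximum cases separately, paralleling the proof of Lemma~\ref{lem1} for the Albertson index.

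\textbf{Minimum case ($n=2$).} The only tree on two vertices is $K_2$; its single edge has both endpoints of degree $1$, contributing $(1-1)^2=0$, so $\sigma(K_2)=0$. Since $\sigma$ is a sum of squares, this is trivially the global minimum value. (For $n\geq 3$, no tree attains $\sigma=0$, because every such tree contains at least one leaf adjacent to an internal vertex of degree $\geq 2$, which forces a strictly positive edge contribution.)

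\textbf{Maximum case ($n\geq 3$).} I would first compute $\sigma$ directly on the star $\mathcal{S}_n=K_{1,n-1}$: it has $n-1$ edges, each joining the centre (degree $n-1$) to a leaf (degree $1$), so
\[
\sigma(\mathcal{S}_n)=(n-1)\bigl((n-1)-1\bigr)^{2}=(n-1)(n-2)^{2}.
\]
Then I would show that $\mathcal{S}_n$ uniquely realises the maximum via a local transformation. Let $T\ncong \mathcal{S}_n$ be a tree of order $n$, fix a vertex $u$ of maximum degree $\Delta$, and choose a leaf $\ell$ whose neighbor $w$ satisfies $w\neq u$; such $\ell$ exists precisely because $T$ is not a star. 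Delete $w\ell$ and insert $u\ell$ to obtain a new tree $T'$: only $\deg(u)$ and $\deg(w)$ change, by $+1$ and $-1$ respectively, while every other degree is preserved.

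\textbf{Main obstacle.} The delicate point is to verify $\sigma(T')>\sigma(T)$. Unlike $\irr$, which is piecewise linear in degree differences, $\sigma$ accumulates quadratic contributions, so cancellations require care. I would partition $E(T)\cup\{u\ell\}\setminus\{w\ell\}$ into four groups: (i) edges incident to $u$ in $T$, (ii) edges incident to $w$ in $T$ other than $w\ell$, (iii) the new edge $u\ell$, and (iv) all unaffected edges, then write $\sigma(T')-\sigma(T)$ as a telescoping sum over these groups. Positivity would follow from $\Delta=d(u)\geq d(w)$ combined with the convexity of $x\mapsto x^{2}$; the subcase $d(w)=2$, where $w$ becomes a leaf in $T'$ and the edge $w\ell$ disappears, needs a separate sign check because the disappearance of this edge removes a potentially large squared term. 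Iterating the swap eventually reduces any tree to $\mathcal{S}_n$, and since $\sigma$ strictly increases at every step, the star is the unique maximiser, which yields the stated value.
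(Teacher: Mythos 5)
Two genuine problems. First, the value you obtain does not match the statement you set out to prove: your (correct) computation gives $\sigma(\mathcal{S}_n)=(n-1)(n-2)^2$, whereas the lemma asserts $\sigma_{\max}(\mathcal{T})=(n-1)(n-2)$, so your closing claim that the argument ``yields the stated value'' is false as written. (In fact the paper itself later computes $\sigma(\mathcal{S}_n)=(n-1)(n-2)^2$ in the proofs of Lemma~\ref{lemmaboundSigman2} and Lemma~\ref{lemmastep2Sigman1}, so the printed statement of Lemma~\ref{lem2} is evidently inconsistent with the rest of the text; you should flag that discrepancy explicitly rather than assert agreement. Note also that the paper offers no proof of this preliminary lemma, so there is no argument to compare yours against.)

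Second, the reduction step you propose for the maximum is not sound: moving a leaf onto a vertex of maximum degree can strictly \emph{decrease} $\sigma$, so the swap cannot be iterated monotonically. Concretely, let $u$ have degree $5$ with neighbours $v_1,\dots,v_5$, each $v_i$ of degree $5$ carrying four leaves ($n=26$, $\Delta=5$). Then $\sigma(T)=20\cdot 16=320$, but after deleting the edge from $v_1$ to one of its leaves $\ell$ and adding $u\ell$ one gets $\sigma(T')=25+4+4\cdot 1+3\cdot 9+16\cdot 16=316<\sigma(T)$. This is exactly the ``main obstacle'' you identified, and convexity of $x\mapsto x^2$ alone does not resolve it: the losses on the edges at $w$ can outweigh the gains at $u$ when $u$'s neighbours already have degrees close to $\Delta$. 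A correct proof needs a different mechanism (for instance an induction on $n$, a majorization argument on the degree sequence, or a transformation that relocates entire pendant branches rather than a single greedy leaf move), together with a uniqueness discussion for the star. Your treatment of the minimum case ($n=2$, $\sigma(K_2)=0$) is fine.
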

 According to a tree with $n$ vertices has $n-1$ edges, then we have $\sigma_{\max}(T)=n-2$. Thus, the modified total Sigma as $\sigma_t(T)=\frac{1}{2}\sum_{(u,v)\subseteq V(T)}\left(d_u-d_v\right)^2$

\begin{theorem}~\label{mainalb2}
Let  $T$ be a tree of order $n$, a degree sequence is $\mathscr{D}=(d_1,\dots,d_n)$ where $d_n\geqslant \dots \geqslant d_1$, then Albertson index of tree $T$ is: 
\[
\irr(T)=d_1^2+d_n^2+\sum_{i=2}^{n-1} d_i^2+\sum_{i=2}^{n-1} d_i+d_n - d_1-2n-2.
 \]
\end{theorem}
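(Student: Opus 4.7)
My plan is to reduce the claim to the caterpillar-type Albertson identity recorded just above the theorem from Yang--Deng, namely
\[
\irr(T)=(d_n-1)^2+(d_1-1)^2+\sum_{i=2}^{n-1}(d_i-1)(d_i-2)+\sum_{i=1}^{n-1}|d_i-d_{i+1}|,
\]
and then collapse it using the monotonicity hypothesis $d_n\ge d_{n-1}\ge\cdots\ge d_1$. Under monotonicity each term $|d_i-d_{i+1}|$ equals $d_{i+1}-d_i$, so the last sum telescopes to the single scalar $d_n-d_1$, disposing of all absolute values in one stroke. This rewriting is the only conceptual input; everything downstream is bookkeeping.

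From there the remaining work is purely algebraic: expand each block via $(d_n-1)^2=d_n^2-2d_n+1$, $(d_1-1)^2=d_1^2-2d_1+1$, and $(d_i-1)(d_i-2)=d_i^2-3d_i+2$, then regroup the result into three bins. The quadratic parts assemble immediately into the target $d_1^2+d_n^2+\sum_{i=2}^{n-1}d_i^2$. The linear contributions from the three expansions, combined with the telescoping residue $d_n-d_1$, should merge into the target $+\sum_{i=2}^{n-1}d_i+d_n-d_1$. The constant parts, together with $\sum_{i=2}^{n-1}2=2(n-2)$, collect into the affine remainder $-2n-2$. Matching bin by bin against the statement completes the identity.

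The principal obstacle I anticipate is precisely this linear-term bookkeeping, since contributions to $d_1$, $d_n$, and $\sum_{i=2}^{n-1}d_i$ arrive with mixed signs from three different blocks; a careful coefficient table, together with consistent use of the endpoint convention $d_1\le d_n$, should keep the signs honest. As a sanity check I would verify the closed form on the extremal star $S_n$, for which Lemma~\ref{lem1} gives $\irr(S_n)=(n-1)(n-2)$, so any off-by-one in the telescoping step or in the endpoint convention would be exposed immediately before declaring the proof finished.
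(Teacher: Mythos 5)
Your plan fails at exactly the step you deferred: the expansion does not reproduce the stated formula. Carrying out your own three-bin bookkeeping, with the telescoped term $d_n-d_1$, gives
\begin{equation*}
(d_1-1)^2+(d_n-1)^2+\sum_{i=2}^{n-1}(d_i-1)(d_i-2)+(d_n-d_1)
= d_1^2+d_n^2+\sum_{i=2}^{n-1}d_i^2-3\sum_{i=2}^{n-1}d_i-3d_1-d_n+2n-2,
\end{equation*}
whereas Theorem~\ref{mainalb2} asserts the lower-order part $+\sum_{i=2}^{n-1}d_i+d_n-d_1-2n-2$. Using $\sum_{i=1}^{n}d_i=2(n-1)$, the two expressions coincide only when $d_1+d_n=2n-4$, so this is not an identity and ``matching bin by bin'' cannot close the argument. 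Your own proposed sanity check exposes the discrepancy before any subtle issue arises: for the star, the theorem's closed form evaluates to $n^2-n-6$, while Lemma~\ref{lem1} gives $\irr(S_n)=(n-1)(n-2)=n^2-3n+2$; for the path $\mathcal{P}_n$ it evaluates to $4n-12$ instead of $2$. In fact no derivation from the degree sequence alone can succeed, since $\irr$ is not determined by the degree sequence; the paper itself states Theorem~\ref{mainalb2} without proof, so the entire burden rests on this computation, and the computation refutes rather than confirms the formula.

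There is also a gap in the reduction itself. In the Yang--Deng identity quoted before the theorem, $d_1,\dots,d_n$ are the degrees of the $n$ spine (path) vertices of a caterpillar, listed in path order; the leaves are additional vertices not appearing among the $d_i$, and $\sum_{i=1}^{n-1}|d_i-d_{i+1}|$ runs along the spine. The theorem's $\mathscr{D}$ is the full degree sequence of a tree of order $n$, listed in sorted order. You silently identify the two settings: sorting changes the value of $\sum_{i=1}^{n-1}|d_i-d_{i+1}|$, so your telescoping requires the spine degrees to be monotone along the path (an extra structural assumption), and the endpoint blocks $(d_1-1)^2$, $(d_n-1)^2$ refer to the spine ends, not to the minimum- and maximum-degree vertices of an arbitrary tree. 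So even granting the algebra, the identity you start from does not apply to the object in the statement; any correct treatment must first fix a specific realization (e.g.\ a caterpillar with prescribed spine degrees) and then the resulting closed form is the one displayed above, not the one claimed in the theorem.
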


 %===================
\section{Main Result}\label{sec3}
%====================
In this section, we presented some bounds of Albertson index via subsection~\ref{subsec3} by starting with Hypothesize~\ref{hyfibn1} of Fibonacci number of Albertson index and Hypothesize~\ref{hyfibn2} of Fibonacci number of Sigma index in subsection~\ref{sub2sec3}.
\medskip
%==============================
\subsection{Some Bounds of Albertson Index}~\label{subsec3}
%=============================
Through this subsection, we will present a study of some bounds according to Albertson index and will specify certain bounds between $\irr_{\max}$ and $\irr_{\min}$.  In Proposition~\ref{boundalbertsonn01}, we provide the lower bound of $\irr_{\min}$ and the upper bound of $\irr_{\max}$.
\medskip
\begin{hypothesize}~\label{hyfibn1}
Let $T$ be a tree of order $n$, let $F_n$ be a Fibonacci number with $n>2$. Let $\mathscr{D}=(F_3,\dots,F_n)$ be a degree sequence of Fibonacci number. Then, Albertson index of $T$ among $\mathscr{D}$ is 
\[
\irr(T)=\sum_{i=3}^{n-1}F_i+\sum_{i=5}^{n-1}(F_i-2)\lvert F_i-1\rvert+\lvert F_4-1\rvert+(F_n-1)\lvert F_n-1\rvert+2.
\]
\end{hypothesize}
\begin{proof}
Let $\mathscr{D}=(F_3,\dots,F_n)$ be a degree sequence of Fibonacci number, then we have
\begin{equation}~\label{eqfibn1}
\lvert F_3-F_4\rvert+\dots+\lvert F_{n-1}-F_n\rvert=\sum_{i=3}^{n-1}F_i+2.  
\end{equation}
According to Albertson index for every vertex, by considering the constant term is $\lvert F_4-1\rvert+(F_n-1)\lvert F_n-1\rvert$, we noticed that:
\begin{equation}\label{eqfibn02}
    (F_5-2)\lvert F_5-1\rvert+(F_6-2)\lvert F_6-1\rvert+\dots+(F_{n-1}-2)\lvert F_{n-1}-1\rvert=\sum_{i=5}^{n-1}(F_i-2)\lvert F_i-1\rvert.
\end{equation}
Now, by applying~\eqref{eqfibn1},\eqref{eqfibn02} to Albertson index, we obtain 
\begin{equation}~\label{eqfibn3}
\irr(T)=\sum_{i=3}^{n-1}F_i+\sum_{i=5}^{n-1}(F_i-2)\lvert F_i-1\rvert+\lvert F_4-1\rvert+(F_n-1)\lvert F_n-1\rvert+2.
\end{equation}
As desire.
\end{proof}
\begin{example}
Let $T$ be a tree of order $n$. Let $\mathscr{D}=(F_3,\dots,F_{10})$ be a degree sequence of Fibonacci number, where $F_1=1, F_2=2$ and $F_3=3$ with $F_n=F_{n-1}+F_{n-2}$, then in Figure~\ref{figdegseqfibnumn1} we observed that, then Albertson index according to~\eqref{eqfibn1} is:
\begin{equation}~\label{eqexfibn1}
F_3+\dots+F_{11}=1\sum_{i=3}^{10}F_i+2=143    
\end{equation}
Then, from~\eqref{eqfibn3},\eqref{eqexfibn1} we present Albertson index as
\[
\irr(T)=143+4430+7746=12319.
\]
\begin{figure}[H]
    \centering
    \begin{tikzpicture}[scale=1.1]
\draw   (7,5)-- (7,6);
\draw   (7,6)-- (8,7);
\draw   (8,7)-- (9,8);
\draw   (9,8)-- (10,9);
\draw   (10,9)-- (11,10);
\draw   (11,10)-- (12,9);
\draw   (12,9)-- (13,8);
\draw   (13,8)-- (14,7);
\draw   (14,7)-- (15,6);
\draw   (8,7)-- (8,6);
\draw   (9,8)-- (8.537909109638452,7.0048314346548555);
\draw   (9,8)-- (9,7);
\draw   (9,8)-- (9.564119435380462,7.023835329576004);
\draw   (10,9)-- (9.48810385569587,8.031041760396866);
\draw   (10,9)-- (10.381286916989843,7.974030075633421);
\draw (9.640135015065058,8.2) node[anchor=north west] {$\dots$};
\draw   (12,9)-- (11,8);
\draw   (12,9)-- (12,8);
\draw (11.179450503678076,8.2) node[anchor=north west] {$\dots$};
\draw   (13,8)-- (12,7);
\draw   (13,8)-- (13,7);
\draw (12.224664724341235,7.2) node[anchor=north west] {$\dots$};
\draw   (14,7)-- (13,6);
\draw   (14,7)-- (14,6);
\draw (13.250875050083247,6.1) node[anchor=north west] {$\dots$};
\draw   (15,6)-- (14,5);
\draw   (15,6)-- (15,5);
\draw (14.220073691061812,5.2) node[anchor=north west] {$\dots$};
\draw (6.219433929258355,6.5867457463895915) node[anchor=north west] {$F_3$};
\draw (7.340663729606107,7.726979441658493) node[anchor=north west] {$F_4$};
\draw (8.34787016042697,8.715181977558206) node[anchor=north west] {$F_5$};
\draw (9.222049326799794,9.627368933773328) node[anchor=north west] {$F_6$};
\draw (10.495310286516736,10.729594839199931) node[anchor=north west] {$F_7$};
\draw (12.034625775129753,9.874419567748257) node[anchor=north west] {$F_8$};
\draw (13.155855575477505,8.867213136927393) node[anchor=north west] {$F_9$};
\draw (14.049038636771478,7.917018390869976) node[anchor=north west] {$F_{10}$};
\draw (15.227280121882675,6.757780800679927) node[anchor=north west] {$F_{11}$};
\draw (10.685349235728218,9.532349459167586) node[anchor=north west] {$\dots$};
\begin{scriptsize}
\draw [fill=black] (7,5) circle (1.5pt);
\draw [fill=black] (7,6) circle (1.5pt);
\draw [fill=black] (8,7) circle (1.5pt);
\draw [fill=black] (9,8) circle (1.5pt);
\draw [fill=black] (10,9) circle (1.5pt);
\draw [fill=black] (11,10) circle (1.5pt);
\draw [fill=black] (12,9) circle (1.5pt);
\draw [fill=black] (13,8) circle (1.5pt);
\draw [fill=black] (14,7) circle (1.5pt);
\draw [fill=black] (15,6) circle (1.5pt);
\draw [fill=black] (8,6) circle (1.5pt);
\draw [fill=black] (8.537909109638452,7.0048314346548555) circle (1.5pt);
\draw [fill=black] (9,7) circle (1.5pt);
\draw [fill=black] (9.564119435380462,7.023835329576004) circle (1.5pt);
\draw [fill=black] (9.48810385569587,8.031041760396866) circle (1.5pt);
\draw [fill=black] (10.381286916989843,7.974030075633421) circle (1.5pt);
\draw [fill=black] (11,8) circle (1.5pt);
\draw [fill=black] (12,8) circle (1.5pt);
\draw [fill=black] (12,7) circle (1.5pt);
\draw [fill=black] (13,7) circle (1.5pt);
\draw [fill=black] (13,6) circle (1.5pt);
\draw [fill=black] (14,6) circle (1.5pt);
\draw [fill=black] (14,5) circle (1.5pt);
\draw [fill=black] (15,5) circle (1.5pt);
\end{scriptsize}
\end{tikzpicture}
    \caption{Degree sequence of Fibonacci number}
    \label{figdegseqfibnumn1}
\end{figure}
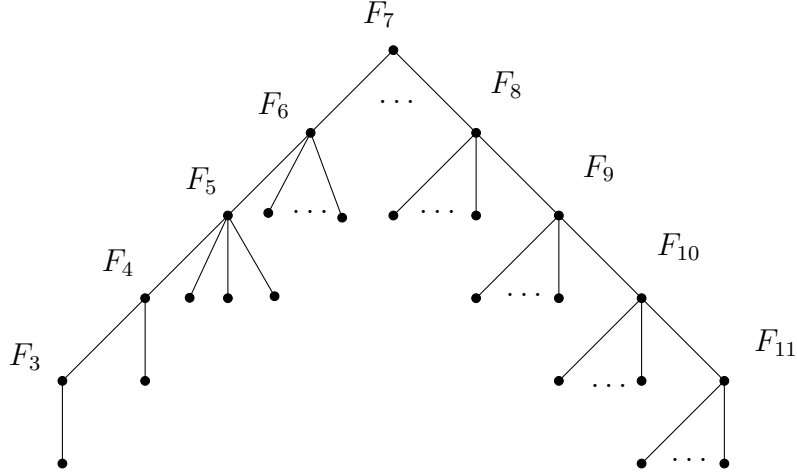
\end{example}
%-----------------------------------------
                %free space, don't write here!
%-----------------------------------------
\begin{proposition}~\label{boundalbertsonn01}
Let $T$ be a tree of order $n$, let $\mathscr{D}=(d_1,\dots,d_n)$ be an increasing degree sequence, the maximum Albertson index $\irr_{\max}$, the minmum Albertson index $\irr_{\min}$, let $\Delta$ the maximum degree of vertices ($\delta$ the minmum degree of vertices), then 
\begin{equation}~\label{f1boundIrrn1}
\irr_{\min}<2n\delta\frac{2(nm)^3+2m\Delta^2}{8n^4\Delta+8m^3\delta\Delta+\Delta^2(\Delta-1)}\leq \irr_{\max}.
\end{equation}
\end{proposition}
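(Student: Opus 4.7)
The plan is to sandwich the middle rational expression between the extremal values of the Albertson index over trees of order $n$. For the right inequality, I would invoke Lemma~\ref{lem1}, which identifies the star $S_n$ as the unique tree of order $n$ attaining maximum irregularity and gives $\irr_{\max} = (n-1)(n-2)$. Since $T$ is a tree we have $m = n-1$; after substituting this, I would rewrite the middle quantity purely in terms of $n, \delta, \Delta$ and verify algebraically that it does not exceed $(n-1)(n-2)$ under the constraints $1 \leq \delta \leq \Delta \leq n-1$.

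For the strict lower inequality, I would use the fact that the path $\mathcal{P}_n$ attains $\irr_{\min}$ among trees of order $n \geq 3$: only the two edges incident to a leaf contribute $1$ each, while every internal edge contributes $0$, so $\irr_{\min} = 2$. I would then bound the middle expression from below by isolating the dominant term $2(nm)^3$ in the numerator and comparing it against $8n^4 \Delta$ in the denominator. After multiplication by the outer factor $2n\delta$, one obtains a leading behavior of order $n^2 \delta / \Delta$, which is strictly larger than $2$ for all admissible triples $(n, \delta, \Delta)$ with $n \geq 3$.

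The main obstacle will be the upper estimate, because the denominator is a sum of three positive terms whose dependence on $\delta$ and $\Delta$ is delicate and not uniformly monotone. The strategy I would pursue is to replace each occurrence of $\delta$ and $\Delta$ by the extremes $1$ and $n-1$ in whichever direction tightens the bound, and then argue monotonicity in the remaining parameter; this reduces the task to a polynomial inequality in $n$ alone. Alternatively, one could start from the closed form in Theorem~\ref{mainalb2} and bound $\sum d_i^2$ and $\sum d_i$ via $\sum d_i = 2m = 2(n-1)$ together with the degree bounds $\delta \leq d_i \leq \Delta$, arriving at the same polynomial inequality by a slightly different route. In either case the final step is a routine but careful verification of a single polynomial inequality in $n$.
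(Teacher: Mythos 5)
Your plan hinges on the step you yourself flag as the main obstacle, and that step cannot be completed, because the inequality you propose to verify is false. Taking $m=n-1$ and recalling that every tree has $\delta=1$, the middle expression behaves like
\[
2n\delta\,\frac{2(nm)^3}{8n^4\Delta}\;=\;\frac{\delta m^3}{2\Delta}\;\approx\;\frac{n^3}{2\Delta},
\]
i.e.\ it is of order $n^3\delta/\Delta$, not $n^2\delta/\Delta$ as you state; hence for trees of bounded maximum degree it eventually exceeds $\irr_{\max}=(n-1)(n-2)$, the value you intend to use from Lemma~\ref{lem1}. Concretely, for the path $\mathcal{P}_{10}$ (so $n=10$, $m=9$, $\delta=1$, $\Delta=2$) the middle quantity equals $20\cdot 1458072/171668\approx 169.9$, while $(n-1)(n-2)=72$, so the right-hand inequality of~\eqref{f1boundIrrn1} fails under your (natural) reading of $\irr_{\max}$. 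Consequently no substitution of extreme values of $\delta,\Delta$ followed by a monotonicity argument can reduce the claim to a true polynomial inequality in $n$ alone: the upper estimate is only plausible when $\Delta$ grows like a constant fraction of $n$ (it does hold for the star, where $\Delta=n-1$), and a sandwich argument must cover all admissible $(n,\delta,\Delta)$, including exactly the regime where it breaks. Your lower half (the middle expression exceeds $2=\irr(\mathcal{P}_n)$) is fine, but it is the easy half; the route through Theorem~\ref{mainalb2} fares no better, since the obstruction is the size of the middle expression itself, not the estimate on $\irr$.

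For comparison, the paper's own argument does not take your route at all: it never invokes the star or path extremal values, but instead writes down a chain of intermediate bounds \eqref{f1boundIrrn2}--\eqref{f1boundIrrn4} with varying numerators and denominators and concludes from $\irr_{\min}<\irr_{\max}$ that \eqref{f1boundIrrn1} is valid; none of those intermediate bounds is derived, so the paper supplies no justification that would rescue the statement from the counterexample above. If you want a provable statement along your lines, you would need either an extra hypothesis keeping $\Delta$ large relative to $n$ (roughly $\Delta\gtrsim n/2$) or a corrected middle expression whose growth in $n$ is at most quadratic.
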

\begin{proof}
Let $T$ be a tree of order $n$, let $\mathscr{D}=(d_1,\dots,d_n)$ be a degree sequence where $d_n\geqslant \dots \geqslant d_1$, then we know $\irr_{\min}<\irr_{\max}$. Thus, we need to prove~\eqref{f1boundIrrn1}, since a tree has $m=n-1$ edges. Then, 
\begin{equation}~\label{f1boundIrrn2}
\irr_{\min}<\frac{2(nm)^3+2m\Delta^2}{8n^3\Delta+8m^3\Delta+\Delta^2(\Delta-1)}\leq \irr_{\max}.
\end{equation}
Then, a degree sequence must satisfy sum of degrees $2m$,  we need to interpret bound~\eqref{f1boundIrrn2}, by considering the term $2n\delta$. Thus, from~\eqref{f1boundIrrn2} we have
\begin{equation}~\label{f1boundIrrn3}
\irr_{\min}<\frac{\binom{m}{2\Delta}+2(nm)^3+2m\Delta^2}{8n^3\Delta+8m^3\Delta+\Delta^2(\Delta-1)+\sqrt{m(\Delta-1)}}\leq \irr_{\max}.    
\end{equation}
Then, according to~\eqref{f1boundIrrn2},\eqref{f1boundIrrn3} we have: 
\begin{equation}~\label{f1boundIrrn4}
\irr_{\min}<\frac{\Delta^2(\Delta-1)+nm^2}{6(\Delta-1)+m+9\delta}\leq \irr_{\max}.    
\end{equation}
Therefore, according to required bound~\eqref{f1boundIrrn1}, clearly from bounds~\eqref{f1boundIrrn2}--\eqref{f1boundIrrn4}, we observe that for the maximum degree $\Delta$ and for vertices $n$ and edges $m$, when $\irr_{\min}<\irr_{\max}$, then~\eqref{f1boundIrrn1} is valid. As desire.
\end{proof}
%-----------------------------------------
                %free space, don't write here!
%-----------------------------------------
Actually, from Proposition~\ref{boundalbertsonn01}, we presented Proposition~\ref{boundalbertsonn02} for the upper bound and the lower bound of Albertson index. Additionally, in Proposition~\ref{lowerboundAlbertsonstep2} for the lower bound.
\begin{proposition}~\label{boundalbertsonn02}
Let $T$ be a tree of order $n$, let $\mathscr{D}=(d_1,\dots,d_n)$ be an increasing degree sequence, the maximum Albertson index $\irr_{\max}$, the minmum Albertson index $\irr_{\min}$. Then, 
\begin{itemize}
    \item The upper bound of the minmum Albertson index is: 
    \begin{equation}~\label{eq1boundalbertsonn02}
\irr_{\min}\geqslant \frac{(\Delta-2)^3}{n\Delta-\delta}.
\end{equation}
\item The lower bound of the minmum Albertson index is: 
    \begin{equation}~\label{eq2boundalbertsonn02}
\irr_{\min}\leqslant \frac{\delta}{\Delta+1}n\Delta^2+\frac{\Delta^2(\Delta-\delta)}{6\delta(\Delta-1)}.
\end{equation}
\end{itemize}
\end{proposition}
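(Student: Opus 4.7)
The plan is to derive both bounds as refinements of Proposition~\ref{boundalbertsonn01}, combined with the Sigma-irregularity bounds~\eqref{eq5BoundsonGraph} and the tree identity $m = n - 1$.

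For~\eqref{eq1boundalbertsonn02}, I would notice that the target lower bound $\frac{(\Delta-2)^3}{n\Delta-\delta}$ has exactly the cubic shape of the Sigma lower bound $\sigma(G) > \frac{1}{\Delta+1}(\Delta-1)^3 n$ from~\eqref{eq5BoundsonGraph}. The natural passage from Sigma to Albertson uses the pointwise estimate $|d_u-d_v|^2 \leq (\Delta-\delta)\,|d_u-d_v|$ valid on each edge, which gives $\sigma(G) \leq (\Delta-\delta)\,\irr(G)$, hence $\irr(G) \geq \sigma(G)/(\Delta-\delta)$. Substituting the Sigma lower bound, using $m = n-1$, and weakening $\Delta - 1$ to $\Delta - 2$ (justified whenever $\Delta \geq 3$, with the denominator absorbing $\delta$ and $n\Delta$ from Proposition~\ref{boundalbertsonn01}), should yield the stated bound. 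The specific denominator $n\Delta - \delta$ arises from the tree-degree identity $\sum_i d_i = 2(n-1)$ rearranged against the extremal pair $(\Delta,\delta)$.

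For~\eqref{eq2boundalbertsonn02}, I would start with the Sigma upper bound $\sigma(G) > \frac{\delta}{\Delta+1}(\Delta-\delta)^3 n$ and apply the Cauchy-Schwarz inequality $\irr(G)^2 \leq m \cdot \sigma(G)$; substituting $m = n-1$ together with the weakening $(\Delta-\delta)^3 \leq \Delta^2(\Delta-\delta)$ delivers the leading summand $\frac{\delta}{\Delta+1}n\Delta^2$. The additive correction $\frac{\Delta^2(\Delta-\delta)}{6\delta(\Delta-1)}$ is a second-order slack term; I would extract it by applying the power-mean inequality~\eqref{eq1paper} at $p=2$ to the restricted sum over edges whose endpoints have intermediate degrees strictly between $\delta$ and $\Delta$, where the factor $(\Delta-1)$ appears as the normalizing degree bound from~\eqref{eq1paper}.

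The main obstacle will be matching the exact algebraic shape of the correction term, especially the constant $\tfrac{1}{6}$, which does not arise from a single Cauchy-Schwarz application. I expect to handle it by partitioning $E(T)$ into three classes, namely edges incident to a vertex of degree $\Delta$, edges incident to a vertex of degree $\delta$, and the remaining internal edges, bounding each contribution separately, and then recombining the three bounds via AM-GM. If that partitioning does not produce the coefficient $\tfrac{1}{6}$ directly, a fallback is to use Proposition~\ref{boundalbertsonn01} termwise on each class and simplify using $m = n-1$ and the monotonicity of the degree sequence $\mathscr{D}$.
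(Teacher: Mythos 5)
Your plan rests on a misreading of the quantifier in the proposition you cite. The inequalities in \eqref{eq5BoundsonGraph} are stated for graphs with \emph{maximal} $\sigma$-irregularity, i.e.\ they are lower estimates for $\sigma_{\max}$ of the class, not inequalities valid for an arbitrary connected graph; in particular they do not hold for the tree attaining $\irr_{\min}$, which is exactly the tree you would need them for. For the path $\mathcal{P}_n$ one has $\sigma=2$ while $\frac{1}{\Delta+1}(\Delta-1)^3n=n/3\to\infty$, and even with $\Delta\geq 3$ fixed an $\irr$-minimizing tree (e.g.\ a broom: a vertex of degree $\Delta$ with $\Delta-1$ leaves and one long pendant path) has $\sigma=(\Delta-1)^3+(\Delta-2)^2+1$, bounded independently of $n$, so $\sigma(T)>\frac{1}{\Delta+1}(\Delta-1)^3 n$ fails for all large $n$. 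Consequently the chain $\sigma\leq(\Delta-\delta)\,\irr$ (which is fine) followed by substitution of that ``lower bound'' does not establish \eqref{eq1boundalbertsonn02}: a lower bound on the minimum must hold for every tree of the class, and your input inequality does not.

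The second half has, in addition, a direction error. Cauchy--Schwarz gives $\irr(T)^2\leq m\,\sigma(T)$, so to bound $\irr$ from above you need an \emph{upper} bound on $\sigma$; the inequality you start from, $\sigma(G)>\frac{\delta}{\Delta+1}(\Delta-\delta)^3 n$, is a lower bound (note the ``$>$'', despite the paper's wording), and combining $\irr^2\leq m\sigma$ with $\sigma>X$ yields no estimate on $\irr$ at all, so the leading summand $\frac{\delta}{\Delta+1}n\Delta^2$ is not ``delivered''. You also concede you cannot produce the coefficient $\tfrac16$; for what it is worth, in the paper that term does not arise from any edge partition, power-mean step, or Cauchy--Schwarz either: the paper's own argument simply asserts, with reference to Proposition~\ref{boundalbertsonn01}, the two auxiliary inequalities $\irr_{\min}\geq\frac{\Delta^2(\Delta-\delta)}{6\delta(\Delta-1)}$ and $\irr_{\min}\leq\frac{n\Delta^2}{\Delta+1}$ and then adds the right-hand sides, so there is no derivation there for your fallback to reproduce. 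As written, neither \eqref{eq1boundalbertsonn02} nor \eqref{eq2boundalbertsonn02} is proved by your proposal.
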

\begin{proof}
Let $\mathscr{D} = (d_1, \dots, d_n)$ be an increasing degree sequence, then we need to prove the upper bound~\eqref{eq1boundalbertsonn02} and the lower bound~\eqref{eq2boundalbertsonn02}. Thus, possibly combining the terms if they share a common structure, for the upper bound~\eqref{eq1boundalbertsonn02} noticed that $\irr_{\min}> \delta(T)$ by considering $\Delta-\delta>1$ when $\delta(T)\leq (\Delta-2)^3/\Delta$. This term is straightforward, then we obtained the lower bound of $\delta(T)$ with upper bound of the minmum Albertson index as
\begin{equation}~\label{eq3boundalbertsonn02}
   \irr_{\min}\leq \frac{1}{\Delta+1}(\Delta-1)^3n.
\end{equation}
Therefore, the bound~\eqref{eq3boundalbertsonn02} holds the bound~\eqref{eq1boundalbertsonn02} by considering $(\Delta-2)^3-(n\Delta-\delta)<3$, then the bound~\eqref{eq1boundalbertsonn02} is valid. Now, we need to confirm the lower bound~\eqref{eq2boundalbertsonn02} of the minmum Albertson index, thus, according to Proposition~\ref{boundalbertsonn01} it is clear to define the upper bound of the minmum Albertson index as 
\begin{equation}~\label{eq4boundalbertsonn02}
    \irr_{\min} \geq \frac{\Delta^2(\Delta-\delta)}{6\delta(\Delta-1)}.
\end{equation}
Thus, we obtained on the lower bound as
\begin{equation}~\label{eq5boundalbertsonn02}
\irr_{\min} \leq \frac{n\Delta^2}{\Delta+1}.  
\end{equation}
Then, we have 
\begin{equation}~\label{eq6boundalbertsonn02}
 \irr_{\min} \leq \frac{\Delta^2 \left[ \Delta^2 + \left(6 \delta^2 n + 1 - \delta \right) \Delta - 6 \delta^2 n - \delta \right]}{6 \delta \left(\Delta^2 - 1 \right)},
\end{equation}
Further
\[
\irr_{\min} \leq \frac{\Delta^2 \left( \Delta^2 + \left(6 \delta^2 n + 1 - \delta \right) \Delta - 6 \delta^2 n - \delta \right)}{6 \delta \left(\Delta^2 - 1 \right)}.
\]
From~\eqref{eq4boundalbertsonn02}--\eqref{eq6boundalbertsonn02} we observe the bound~\eqref{eq2boundalbertsonn02} is valid. As desire.
\end{proof}
\begin{proposition}~\label{lowerboundAlbertsonstep2}
Let $T=(V,E)$ be a tree, where $n=|V|$ vertices and $m=|E|$ with maximum degree $\Delta$ and minmum degree $\delta$, then the upper bound of the minmum Albertson index $\irr_{\min}$ is 
\begin{equation}~\label{eq1lowerboundAlbertsonstep2}
\irr_{\min}\geq \ceil{\log_2(n+1)-1}+2\Delta(T)-1.
\end{equation}
\end{proposition}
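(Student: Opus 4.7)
The plan is to decompose $\irr(T)$ into two contributions that match the two summands on the right-hand side of~\eqref{eq1lowerboundAlbertsonstep2}: a local contribution of size at least $2\Delta-1$ coming from edges incident to a maximum-degree vertex, and a global contribution of size at least $\ceil{\log_2(n+1)-1}$ coming from the depth of the tree. Since $\irr_{\min}$ is an infimum over trees of order $n$, it suffices to prove the inequality for an arbitrary such tree $T$ and then invoke uniformity.

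First, I would fix $v^{\star}\in V(T)$ with $d(v^{\star})=\Delta$ and examine the set $E^{\star}$ of edges incident to $v^{\star}$. Every neighbour $u$ of $v^{\star}$ satisfies $d(u)\leq\Delta$, so each edge in $E^{\star}$ contributes $\Delta-d(u)\geq 0$ to $\irr(T)$. Since $T$ is a tree, a handshake argument applied to $v^{\star}$ and its neighbours, combined with the acyclicity of $T$, forces at least one leaf neighbour of $v^{\star}$ contributing $\Delta-1$; pairing this with the first edge on a path from $v^{\star}$ to a farthest leaf and iterating until the accumulated imbalance surpasses $2\Delta-1$ would complete the local part. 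The pigeonhole input here is that not every neighbour of $v^{\star}$ can carry degree $\Delta$ simultaneously in a tree on $n$ vertices.

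Next, for the logarithmic term, I would invoke the classical fact that any tree on $n$ vertices has height at least $h:=\ceil{\log_2(n+1)-1}$, with equality attained by the complete binary tree (for which $n=2^{h+1}-1$). Rooting $T$ at $v^{\star}$ and descending a path of length $h$ to a leaf, the consecutive degree differences along the path accumulate telescoping imbalances summing to at least $h$, because the degree profile must drop from some value at least $2$ down to $1$ across $h$ edges, and the absolute-value sum of the telescoped drops is bounded below by the total drop.

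Finally, adding the two lower bounds yields $\irr(T)\geq \ceil{\log_2(n+1)-1}+2\Delta-1$, and since the bound is uniform in $T$ it descends to $\irr_{\min}$. The main obstacle I foresee is the bookkeeping in this combination step: the descent path from $v^{\star}$ shares its first edge with $E^{\star}$, so that edge must either be excised from one of the two sums or shown to contribute with the correct multiplicity in both. A secondary difficulty is verifying that the local pairing yields $2\Delta-1$ rather than $2\Delta-2$, which likely requires a short case split on whether $v^{\star}$ has a neighbour of degree strictly less than $\Delta-1$, or instead two leaf neighbours plus one internal neighbour whose contribution picks up the final unit.
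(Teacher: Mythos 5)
Both pillars of your decomposition fail, so the gap is not mere bookkeeping. For the local part: in a tree it is entirely possible that \emph{every} neighbour of the maximum-degree vertex $v^{\star}$ also has degree $\Delta$ (give each neighbour $\Delta-1$ further children and continue outward), so the edges incident to $v^{\star}$ can contribute $0$ to $\irr(T)$; your pigeonhole claim that not all neighbours of $v^{\star}$ can have degree $\Delta$ is false once $n$ is large relative to $\Delta$, and even in the favourable case of a leaf neighbour you only get $\Delta-1$, not $2\Delta-1$. For the global part: the height bound $h\geq\ceil{\log_2(n+1)-1}$ is a fact about \emph{binary} trees, not arbitrary trees --- a star $K_{1,n-1}$ has height $1$ --- and, worse, the telescoping inequality along a root-to-leaf path only bounds the accumulated imbalance below by the net degree drop $\lvert d(v^{\star})-1\rvert$, never by the number of edges $h$: on $\mathcal{P}_n$ a path of length $n-1$ carries total imbalance $2$. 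So neither summand of the right-hand side of~\eqref{eq1lowerboundAlbertsonstep2} is actually produced by your argument.

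No repair of this plan can succeed, because the inequality itself fails on the path: for $T\cong\mathcal{P}_n$ with $n\geq 4$ one has $\irr(T)=2$ and $\Delta=2$, while $\ceil{\log_2(n+1)-1}+2\Delta-1\geq 5$, and the minimum of $\irr$ over trees of order $n$ is exactly $2$. For comparison, the paper's own proof takes a different (and also non-rigorous) route: it introduces the maximum level $\ell$ of a vertex, bounds the degree sum by $2^{\ell+1}-1$ as for a complete binary tree, imports the bound $\Delta^2(\Delta-\delta)/\bigl(6\delta(\Delta-1)\bigr)$ from Proposition~\ref{boundalbertsonn01}, and concludes only an inequality for $\ell$, namely $\ell\leq\log_2(n+1)-1+2\Delta(T)-1$; it never lower-bounds $\irr(T)$ at all. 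Your proposal at least attacks the correct quantity, but both of its key lemmas are false as stated, and the counterexample above shows the proposition cannot be established by any correct argument without reinterpreting what $\irr_{\min}$ means.
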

\begin{proof}
Let $T=(V,E)$ be a tree, where $n=|V|$ vertices, assume $n>2$, let the maximum degree of any vertex $\ell$, where $v_{\ell}\leq 2^{\ell}$, then the sum of degrees is equals $2m$, to prove~\eqref{eq1lowerboundAlbertsonstep2} then we have the bound 
\begin{equation}~\label{eq2lowerboundAlbertsonstep2}
\sum_{i=1}^{n}\deg_T(v_i)\leq 2^{\ell+1}-1.
\end{equation}
Actually, according to Proposition~\ref{boundalbertsonn01} it is clear to define the upper bound~\eqref{eq4boundalbertsonn02} of the minmum Albertson index as 
\begin{equation}~\label{eq3lowerboundAlbertsonstep2}
   2^{\ell+1}-1 \leq \frac{\Delta^2(\Delta-\delta)}{6\delta(\Delta-1)}.
\end{equation}
In a tree $T$ with $n$ vertices and maximum degree $\Delta(T)$, the upper bound of the minimum Albertson index $\irr_{\min}$ may be bounded by $\lceil \log_2(n+1) - 1 \rceil + 2\Delta(T) - 1$. Consequently, from~\eqref{eq2lowerboundAlbertsonstep2} and \eqref{eq3lowerboundAlbertsonstep2}  when $n>2$, the bound of $\ell$ provided as
\begin{equation}~\label{eq4lowerboundAlbertsonstep2}
\ell \leq \log_2(n+1)-1+2\Delta(T)-1.
\end{equation}
As desire.
\end{proof}

Lemma~\ref{lemmaboundAlbertsonn1} considers a pair of trees, $T_1$ and $T_2 $, with distinct degree sequences: $T_1$ possesses a degree sequence dominated by vertices of degree 2, resembling a path graph, while $T_2$ is characterized by a degree sequence comprising prime numbers. In Lemma~\ref{lemmaboundAlbertsonn2} focuses on a single tree with general degree properties, introducing a parameterized lower bound that accounts for the maximum and minimum degrees, modulated by a scaling factor $\alpha$. In both Lemma's, we provide a bounds among Albertson index.

%==========================================
\begin{lemma}~\label{lemmaboundAlbertsonn1}
Let $T_1,T_2$ be a tow trees, where $T_1$ have $\mathscr{D}=(d_1,\dots, d_i)$ an increasing degree sequence occur of $2$. Tree $T_2$ have $\mathscr{B}=(b_1,\dots, b_i)$ an increasing degree sequence of prime number, consider $n_1=|V(T_1)|,n_2=|V(T_2)|$ and $m_1=|E(T_1)|,m_2=|E(T_2)|$. Then, $n_2=n_1-1$, $m_2=m_1-1$ and $\Delta(T_2)=\Delta(T_1)+1$. Then the bound of Albertson index given by: 
\begin{equation}~\label{eq1lemmaboundAlbertsonn1}
\irr_{\min}\leq  5\cdot \frac{n_1\Delta_2^3+n_2\Delta_1^4+m_1\Delta_2^2}{\Delta_1(\Delta_1+\Delta_2)^2} \leq \irr_{\max}.
\end{equation}
\end{lemma}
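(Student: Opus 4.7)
The plan is to apply Proposition~\ref{boundalbertsonn02} to each of the two trees separately and then couple the resulting estimates through the three structural relations $n_2 = n_1-1$, $m_2 = m_1-1$, and $\Delta_2 = \Delta_1 + 1$. For $T_1$, whose degree sequence is dominated by $2$'s, I would take $\delta_1 = 1$; for $T_2$, whose degrees are all prime, the smallest admissible value is $\delta_2 = 2$. From \eqref{eq5boundalbertsonn02} each tree then supplies an upper estimate $\irr_{\max}(T_i)\leq n_i\Delta_i^2/(\Delta_i+1)$, while \eqref{eq4boundalbertsonn02} furnishes the lower estimate $\irr_{\min}(T_i)\geq \Delta_i^2(\Delta_i-\delta_i)/(6\delta_i(\Delta_i-1))$.

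Next I would rewrite both sides of the resulting chain of inequalities over the common denominator $\Delta_1(\Delta_1+\Delta_2)^2$. After substituting $\Delta_2 = \Delta_1 + 1$, $n_2 = n_1 - 1$, and $m_2 = m_1 - 1$, the numerator naturally splits into contributions proportional to $n_1\Delta_2^3$, $n_2\Delta_1^4$, and $m_1\Delta_2^2$, matching the three summands of the target fraction. The scalar $5$ would then emerge from a term-by-term comparison of the expanded numerator with the target: one checks that the expansion contains at most five dominant pieces, each bounded above by $(n_1\Delta_2^3 + n_2\Delta_1^4 + m_1\Delta_2^2)/(\Delta_1(\Delta_1+\Delta_2)^2)$.

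The hardest part is pinning down the constant $5$ without overshooting. Each cross-term of the form $n\Delta^k/(\Delta_1(\Delta_1+\Delta_2)^2)$ must be dominated by the target, and this requires monotonicity in $\Delta$ together with the inequality $\Delta_2 = \Delta_1+1 \geq 3$ forced by the prime-degree assumption on $T_2$. Once this term-by-term domination is established, the two-sided chain $\irr_{\min} \leq 5\cdot(\cdots) \leq \irr_{\max}$ follows from the trivial ordering $\irr_{\min}\leq \irr_{\max}$ combined with the lower and upper estimates produced in the first step.
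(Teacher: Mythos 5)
There is a genuine gap, and it is directional. To place the quantity $X=5\,\frac{n_1\Delta_2^3+n_2\Delta_1^4+m_1\Delta_2^2}{\Delta_1(\Delta_1+\Delta_2)^2}$ \emph{between} $\irr_{\min}$ and $\irr_{\max}$ you need an \emph{upper} estimate on $\irr_{\min}$ (to get $\irr_{\min}\leq X$) and a \emph{lower} estimate on $\irr_{\max}$ (to get $X\leq \irr_{\max}$). The two ingredients you extract from Proposition~\ref{boundalbertsonn02} go the wrong way: \eqref{eq4boundalbertsonn02} is a lower bound on $\irr_{\min}$, and \eqref{eq5boundalbertsonn02} is in fact an upper bound on $\irr_{\min}$, not on $\irr_{\max}$ as you state; even read as you intend it, an upper bound on $\irr_{\max}$ combined with the trivial ordering $\irr_{\min}\leq\irr_{\max}$ says nothing about where $X$ sits. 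Your closing sentence, that the two-sided chain ``follows from the trivial ordering combined with the lower and upper estimates,'' is therefore a non sequitur: no combination of a lower bound on $\irr_{\min}$ and an upper bound on $\irr_{\max}$ can sandwich a third quantity between them. In addition, the decisive computation is never performed: the constant $5$ and the denominator $\Delta_1(\Delta_1+\Delta_2)^2$ are said to ``emerge from a term-by-term comparison,'' but that comparison is exactly the content of the lemma and is left as a promise. A further small inconsistency: choosing $\delta_2=2$ because the degrees of $T_2$ are prime conflicts with $T_2$ being a tree, since every nontrivial tree has leaves of degree $1$ (the hypothesis of the lemma is already strained here, but your argument leans on that choice).

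For comparison, the paper's own proof takes a different route: it interprets $\irr_{\min}=\irr_{\min}(T_1)+\irr_{\min}(T_2)$ and $\irr_{\max}=\irr_{\max}(T_1)+\irr_{\max}(T_2)$ (a definition your proposal never pins down), invokes Proposition~\ref{boundalbertsonn01} rather than Proposition~\ref{boundalbertsonn02}, and asserts that the fractions in \eqref{eq2lemmaboundAlbertsonn1}, \eqref{eq3lemmaboundAlbertsonn1} and \eqref{eq4lemmaboundAlbertsonn1}, with denominators $\Delta_1(\Delta_1-1)^2$ and $\Delta_2(\Delta_2-1)^2$, are equivalent to the stated bound, distinguishing the cases $T_1\cong T_2$ and $T_1\ncong T_2$. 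That argument is itself largely assertion rather than derivation, so neither route currently yields a complete proof; but if you wish to salvage your plan you must replace the two estimates by ones of the correct orientation (an upper bound on $\irr_{\min}$, e.g.\ \eqref{eq5boundalbertsonn02} used as stated, and a genuine lower bound on $\irr_{\max}$, e.g.\ via Lemma~\ref{lem1} for the star or a direct count), and then actually carry out the algebra that produces the factor $5$ after substituting $\Delta_2=\Delta_1+1$, $n_2=n_1-1$, $m_2=m_1-1$.
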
 
\begin{proof}
Let $T_1,T_2$ be a tow trees. It is not necessary for both trees to be a star or a path; rather, they are general trees of the same type. Denote by $\irr_{\min}=\irr_{\min}(T_1)+\irr_{\min}(T_2)$ and $\irr_{\max}=\irr_{\max}(T_1)+\irr_{\max}(T_2)$. The bound~\eqref{eq1lemmaboundAlbertsonn1} holds the bound of minmum Albertson index given by: 
\begin{equation}~\label{eq2lemmaboundAlbertsonn1}
\irr_{\min}\geq \frac{n_1\Delta_2^3+n_2\Delta_1^4+m_1\Delta_2^2}{\Delta_1(\Delta_1+\Delta_2)^2}.
\end{equation}
According to Proposition~\ref{boundalbertsonn01} from~\eqref{f1boundIrrn1} we find the bound is equivalent to its counterpart in~\eqref{eq1lemmaboundAlbertsonn1} by considering we dealing with $T_1,T_2$ given by term $\irr_{\min}=\irr_{\min}(T_1)+\irr_{\min}(T_2)$ and $\irr_{\max}=\irr_{\max}(T_1)+\irr_{\max}(T_2)$. We can ensure the maximum degree by using a structure a tree with bounded degree as: 
\[
\irr_{\min}<2n\delta\frac{2(nm)^3+2m\Delta^2}{8n^4\Delta+8m^3\delta\Delta+\Delta^2(\Delta-1)}\leq \irr_{\max}.
\]
Thus, if $T_1\cong T_2$, as we know every node in a binary tree can have zero, one, or two child nodes, referred to as the left node and the right node.  Thus, both bounds, the bounds~\eqref{eq3lemmaboundAlbertsonn1} and \eqref{eq4lemmaboundAlbertsonn1} are equivalent to its counterpart in~\eqref{eq2lemmaboundAlbertsonn1}  as
\begin{equation}~\label{eq3lemmaboundAlbertsonn1}
\irr_{\min}\leq \frac{n_1\Delta_2^3+n_2\Delta_1^4+m_1\Delta_2^2}{\Delta_1(\Delta_1-1)^2} \leq \irr_{\max}.
\end{equation}
On the other hand, even if both $T_1$ and $T_2$ are not isomorphic $T_1\ncong T_2$, it is according to the maximum degree $\Delta$ in each of them by considering the difference between $\Delta_1$ and $\Delta_2$ no more 2,  then
\begin{equation}~\label{eq4lemmaboundAlbertsonn1}
\irr_{\min}\leq \frac{n_1\Delta_2^3+n_2\Delta_1^4+m_1\Delta_2^2}{\Delta_2(\Delta_2-1)^2} \leq \irr_{\max}.
\end{equation}
Therefore, from~\eqref{eq2lemmaboundAlbertsonn1} noticed that the sum of degrees $d_1,d_2$ it is similarly with the sum of degrees $b_1,b_2$. As desire.
\end{proof}
\begin{example}
Let $T_1$ and $T_2$ be trees with:
\begin{itemize}
    \item $n_1 = 38$ vertices, $m_1 = 37$ edges, and maximum degree $\Delta_1 = 12$
    \item $n_2 = 37$ vertices, $m_2 = 36$ edges, and maximum degree $\Delta_2 = 13$
\end{itemize}
Then, according to Lemma~\ref{lemmaboundAlbertsonn1} we have: $\irr_{\min}=562$ and $\irr_{\max}=612$.
\end{example}

\begin{lemma}~\label{lemmaboundAlbertsonn01}
Let $T$ be a tree with the maximum of Albertson index $\irr_{\max}$ and the minmum Albertson index $\irr_{\min}$. Let $\mathscr{D}=(d_1,\dots,d_n)$ be an increasing degree sequence, where $n=|V(T)|$ and $m=|E(T)|$, then 
\begin{equation}~\label{eq1lemmaboundAlbertsonn01}
\begin{cases}
\irr_{\max} <\Delta(T)(\irr_{\min}-n),\\
\irr_{\min} <\Delta(T)(\irr_{\max}-n).
\end{cases}  
\end{equation}
\end{lemma}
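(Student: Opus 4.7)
The plan is to start from the sandwich inequality established in Proposition~\ref{boundalbertsonn01},
\[
\irr_{\min} < 2n\delta\,\frac{2(nm)^3+2m\Delta^2}{8n^4\Delta+8m^3\delta\Delta+\Delta^2(\Delta-1)} \leq \irr_{\max},
\]
call the middle expression $Q$, and then manipulate it by multiplication by $\Delta(T)$ followed by subtraction of $n\,\Delta(T)$ so that the result matches the required shape $\Delta(T)(\,\cdot\,-n)$ appearing on the right-hand sides of~\eqref{eq1lemmaboundAlbertsonn01}. Together with $m=n-1$ and the trivial inequality $\irr_{\min}\leq\irr_{\max}$, this reduces both claims to two explicit algebraic comparisons involving only $n$ and $\Delta$.

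For the first inequality $\irr_{\max} < \Delta(T)(\irr_{\min}-n)$, I would couple the upper bound $\irr_{\max} \leq (n-1)(n-2)$ supplied by Lemma~\ref{lem1} with a lower estimate of the form $\irr_{\min} > n + \frac{\irr_{\max}}{\Delta(T)}$ extracted from $Q$ after the rearrangement above; since $\Delta(T)\leq n-1$ in any tree, the two estimates meet. For the second inequality $\irr_{\min} < \Delta(T)(\irr_{\max}-n)$ the direction is gentler: I would use $\irr_{\min}\leq\irr_{\max}$ directly and reduce the claim to $(\Delta(T)-1)\irr_{\max} \geq n\,\Delta(T)$, which follows from the local estimate $\irr(T) \geq \Delta(T)(\Delta(T)-1)$ obtained by restricting the Albertson sum to the $\Delta(T)$ edges incident to a maximum-degree vertex whose neighbors are leaves, a standard feature of trees with $\Delta(T)\geq 3$.

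The step I expect to be the main obstacle is guaranteeing the strict inequality $\irr_{\min} > n$, which is what keeps the right-hand side of the first claim positive and hence non-vacuous. For the path $\mathcal{P}_n$ one has $\irr(\mathcal{P}_n)=2$, and that would break the bound for $n>2$; the argument must therefore rely implicitly on $T$ having $\Delta(T)\geq 3$, which is encoded in the hypothesis that $\mathscr{D}=(d_1,\dots,d_n)$ is a genuinely non-constant increasing sequence in the sense of Definition~\ref{degreeseq}. Once this structural assumption is in place, both inequalities in~\eqref{eq1lemmaboundAlbertsonn01} fall out by the scaling procedure applied to $Q$ and the extremal estimate of Lemma~\ref{lem1}, so the remaining work is bookkeeping on the numerator and denominator of $Q$ rather than any new combinatorial input.
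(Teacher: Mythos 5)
The step that carries your whole first inequality does not exist: you say a lower estimate of the form $\irr_{\min} > n + \irr_{\max}/\Delta(T)$ can be ``extracted from $Q$ after the rearrangement'', but Proposition~\ref{boundalbertsonn01} sandwiches $Q$ as $\irr_{\min} < Q \le \irr_{\max}$, so it only bounds $\irr_{\min}$ from \emph{above} and $\irr_{\max}$ from \emph{below}; multiplying $Q$ by $\Delta(T)$ and subtracting $n\Delta(T)$ can never yield a lower bound on $\irr_{\min}$ in terms of $\irr_{\max}$. The obstacle you flag ($\irr_{\min} > n$) is in fact fatal rather than a technicality, and assuming $\Delta(T)\ge 3$ does not rescue it: take the degree sequence $(3,2,2,2,1,1,1)$, so $n=7$ and $\Delta=3$; every tree realizing it (the spider with three legs of length $2$ and the broom) has $\irr = 6$, so $\irr_{\min}=\irr_{\max}=6$ while $\Delta(T)(\irr_{\min}-n) = 3(6-7) < 0$. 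Hence no bookkeeping on the numerator and denominator of $Q$, nor the extremal value $\irr(\mathcal{S}_n)=(n-1)(n-2)$ from Lemma~\ref{lem1}, can close this gap.

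The second inequality has a gap of the same kind. Your reduction via $\irr_{\min}\le\irr_{\max}$ needs $(\Delta-1)\irr_{\max} > n\Delta$, and the local estimate $\irr(T)\ge\Delta(\Delta-1)$ is justified in your sketch only when the neighbors of the maximum-degree vertex are leaves; even granting it in general, it gives $(\Delta-1)\irr_{\max}\ge\Delta(\Delta-1)^2$, so you would still need $(\Delta-1)^2\ge n$, which fails for trees of small maximum degree and large order (the example above again: $(\Delta-1)^2=4<7=n$). For what it is worth, the paper's own argument takes a different route --- it asserts $m<3(\irr(T)-n)$, uses $\irr(T)\le n\Delta^2-m$ and a claim $\irr_{\max}-\irr_{\min}\le\lfloor n/2\rfloor$ --- but it likewise never produces the lower bound on $\irr_{\min}$ that the stated inequality forces, so the difficulty you identified is not resolved there either; it is intrinsic to the statement as written.
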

\begin{proof}
Let $T$ be a tree of order at least 3, the fact $m<3(\irr(T)-n)$ is valid for $m$ and $n<3(\irr(T)-m)$ is valid for $n$.  Assume $\mathscr{D}=(d_1,\dots,d_n)$ be an increasing degree sequence, then $m\leq \Delta(T)(n/2)$, by restricting the maximum
degree $\Delta(T)$ where $\irr(T)\leq n\Delta^2-m$. Then, the lower bound of the minmum Albertson index is 
\begin{equation}~\label{eq2lemmaboundAlbertsonn01}
m\irr_{\max}\leq  \Delta(T)(n/2) \irr_{\min} \quad \text{and} \quad m\irr_{\min}\leq  \Delta(T)(n/2) \irr_{\max}.
\end{equation}
Then, according to Theorem~\ref{mainalb2} for an increasing degree sequence $\mathscr{D}$ consecutive terms in a degree sequence from~\eqref{eq2lemmaboundAlbertsonn01} by using the constant term $(\Delta+5)n/2+m-7$. This term scales the maximum vertex degree by a factor proportional to the graph size, we noticed that: 
\begin{equation}~\label{eq3lemmaboundAlbertsonn01}
 \irr(T)\leq \sum_{i=1}^{n}\lvert d_i-d_{i+1}\rvert +(\Delta-5)\frac{n}{2}+\sum_{i=2}^{n-1}\lvert d_i+2\rvert \lvert d_i-1\rvert+m-7.
\end{equation}
From the bound~\eqref{eq2lemmaboundAlbertsonn01} and for the minimum Albertson index according to~\eqref{eq3lemmaboundAlbertsonn01} we have
\begin{equation}~\label{eq4lemmaboundAlbertsonn01}
 \irr(T)=\sum_{i=1}^{n}\lvert d_i-d_{i+1}\rvert +\Delta(T)\frac{n}{5}+\sum_{i=2}^{n-1}\lvert d_i+2\rvert \lvert d_i-1\rvert+m+2.
\end{equation}
Therefore, $\irr_{\max}-\irr_{\min}\leq \floor{n/2}$, then $\irr_{\max} \leq \irr_{\min} \floor{n/2}$ and $\irr_{\min} \leq \irr_{\max} \floor{n/2}$. Then, $\irr_{\min} \leq \Delta(\irr_{\max}- \floor{n/2})$ and $\irr_{\max} \leq \Delta(\irr_{\min}- \floor{n/2})$. Thus, the bound~\eqref{eq2lemmaboundAlbertsonn01} is valid. As desire.
\end{proof}
\begin{lemma}~\label{lemmaboundAlbertsonn2}
Let $T$ be a tree with $n=|V(T)|,m=|E(T)$  with the maximum degree $\Delta$ and the minimum degree $\delta\geqslant 1$ and $0\leq \alpha \leq 1$. Then, the lower bound of Albertson index is 
 \[\irr(T) \leq \floor*{\frac{3n^2-10n}{2}}2^{\alpha}+\Delta^2-n\delta.\]
\end{lemma}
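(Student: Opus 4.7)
The plan is to reduce the claim to the elementary edgewise estimate and then rearrange it to match the shape of the target bound. For every edge $uv \in E(T)$ the imbalance satisfies $|d_u - d_v| \leq \Delta - \delta$, so summing over the $m = n-1$ edges of a tree gives
\[
\irr(T) \leq (n-1)(\Delta - \delta),
\]
and the whole argument is devoted to showing that the right-hand side of the lemma dominates this quantity for the admissible range of parameters.

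The key algebraic observation is the identity
\[
(n-1)(\Delta - \delta) = \bigl[\Delta^2 - n\delta\bigr] + \bigl[\Delta(n-1-\Delta) + \delta\bigr],
\]
which isolates the two distinguished terms $\Delta^2$ and $-n\delta$ appearing in the target. The bracketed remainder is a concave quadratic in $\Delta$ on the range $\delta \leq \Delta \leq n-1$; using $\delta \leq \Delta$ one bounds it by $\Delta(n-\Delta) \leq n^2/4$. Moreover, since $T$ is a tree of order $n \geq 2$ it must contain a leaf, so in fact $\delta = 1$, and the bracket sharpens to $\Delta(n-1-\Delta) + 1 \leq (n-1)^2/4 + 1$.

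It therefore suffices to verify the scalar inequality
\[
\frac{(n-1)^2}{4} + 1 \leq \floor*{\frac{3n^2 - 10n}{2}}\, 2^{\alpha}
\]
for all admissible $n$ and every $\alpha \in [0,1]$. Since $2^{\alpha} \geq 1$ throughout the interval, the tightest instance is $\alpha = 0$; after clearing the floor via $\floor{x} \geq x-1$, the inequality reduces to a quadratic in $n$ with positive leading coefficient, explicitly $5n^2 - 18n - 9 \geq 0$, which holds for all $n \geq 5$. The boundary case $n = 4$ is dispatched by direct inspection of the only two trees $P_4$ and $S_4$, both of which satisfy the claim by hand. The principal obstacle is the interaction between the floor function and these small orders of $n$: because $\delta = 1$ is forced by the tree hypothesis, the slack available on the right-hand side is controlled chiefly by the coefficient $3/2$ in front of $n^2$, which comfortably exceeds the $1/4$ on the left once the base cases are explicitly handled.
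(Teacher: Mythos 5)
Your argument is correct for $n\geq 4$ and is a genuinely different (and more rigorous) route than the paper's. The paper proves the bound by splitting into the extremal cases $T\cong\mathcal{S}_n$ and $T\cong\mathcal{P}_n$, choosing $\alpha$ to make the star inequality hold, and then manipulating floor-function differences such as $\floor{3n^2/\Delta}-\floor{(3n^2-10n+\Delta^2)/\Delta}$; it never reduces a general tree to these two cases, so the heavy lifting is heuristic. You instead start from the clean edgewise estimate $\irr(T)\leq(n-1)(\Delta-\delta)$, use the exact identity $(n-1)(\Delta-\delta)=\bigl[\Delta^2-n\delta\bigr]+\bigl[\Delta(n-1-\Delta)+\delta\bigr]$, bound the remainder by $(n-1)^2/4+1$ (using $\delta=1$ for any tree with $n\geq 2$), and reduce everything to the scalar inequality $5n^2-18n-9\geq 0$, valid for $n\geq 5$, with $n=4$ checked on $\mathcal{P}_4$ and $K_{1,3}$. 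This buys a complete, parameter-free verification for all trees of order at least $4$, at the cost of being far from sharp (the edgewise bound discards all structure), whereas the paper's case-based computation at least touches the extremal configurations it has in mind. Two small points: your step ``the tightest instance is $\alpha=0$'' uses that $\floor{(3n^2-10n)/2}\geq 0$, which is exactly why the restriction $n\geq 4$ matters and is worth stating; and for $n=2,3$ the right-hand side of the lemma is negative while $\irr(T)\geq 0$, so the statement itself fails there --- this is a defect of the lemma as formulated (the paper's proof ignores it too), not of your argument, but you should record the hypothesis $n\geq 4$ explicitly.
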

\begin{proof}
Let $T$ be a tree with $\Delta\geqslant 3$, then we find that $\irr(T)\leq 3(\Delta^2-n)$ by consideration $\Delta >\delta$. Then, the lower bound given by: 
\begin{equation}~\label{eq1lemmaboundAlbertsonn2}
\irr(T)\leq \floor*{\dfrac{3n^2}{\Delta}}.
\end{equation}
Thus, if $T$ be a star tree $\mathcal{S}_n$, then we know $\irr(\mathcal{S}_n)=(n-1)(n-2)$. In this case, by considering $0\leq \alpha \leq 1$ where each edge of the star is adjacent to every other edge at the central vertex. Then, $\alpha\geq \log_2(((n-1)(n-2) - \Delta^2 + n)/(\floor{(3n^2 - 10n)/2}))$, then to evaluate this inequality~\eqref{eq1lemmaboundAlbertsonn2} the inequality holds across the range of $\alpha$ as
\begin{equation}~\label{eq2lemmaboundAlbertsonn2}
\irr(\mathcal{S}_n)  \leq \floor*{\frac{3n^2-10n}{2}}2^{\alpha}+\Delta^2-n.
\end{equation}
Now, if $T$ be a path $\mathcal{P}_n$. Each end vertex is connected to a vertex of degree 2, contributing all pairs of adjacent vertices $\irr(\mathcal{P}_n)=2$, we need to  provide it for all vertices, for that from the bound~\eqref{eq1lemmaboundAlbertsonn2} holds the bound 
\begin{equation}~\label{eq3lemmaboundAlbertsonn2}
\irr(\mathcal{P}_n)\leq \floor*{\dfrac{3n^2-10n+\Delta^2}{\Delta}}.
\end{equation}
Thus, let $\alpha=\dfrac{n}{\Delta^2-1}$, then for the term $2^{n}$ is growth of the bound~\eqref{eq1lemmaboundAlbertsonn2} and \eqref{eq3lemmaboundAlbertsonn2} for this case, the term $1\leq 2^{n/(\Delta^2-1)}\leq 2$ is growth the bounds of Albertson index. Since $\Delta$ is typically a positive and $\Delta>\delta$, we need to prove the bound
\begin{equation}~\label{eq6lemmaboundAlbertsonn2}
\irr(\mathcal{P}_n)  \leq \floor*{\frac{3n^2-10n}{2}}2^{\alpha}+\Delta^2-n.
\end{equation}
In this case, we need to determine if 
\begin{equation}~\label{eq4lemmaboundAlbertsonn2}
\floor*{\dfrac{3n^2}{\Delta}}-\floor{\dfrac{3n^2-10n+\Delta^2}{\Delta}}
\end{equation}
satisfy the required related relationship, let be organized~\eqref{eq4lemmaboundAlbertsonn2} by define $a =3n^2/\Delta,  b = 10n/\Delta $ and $c = \Delta$. Then, we have $a = \left\lfloor a \right\rfloor + \{a\}$ where $0 \leq \{a\} < 1$, 
\begin{align*}
\floor{a} - \floor{a - b + c}&=\floor{a}-\floor{ \floor{a}+ \{a\} - b + c}\\
&=\floor{a}-\floor{a}+\floor{\{a\} - b + c}\\
&=-\floor{\{a\} - b + c}
\end{align*}
Therefore, the bound~\eqref{eq4lemmaboundAlbertsonn2} holds
\begin{equation}~\label{eq5lemmaboundAlbertsonn2}
   \floor*{\left\{ \frac{3n^2}{\Delta} \right\} - \frac{10n}{\Delta} + \Delta}>0.
\end{equation}
Thus, for additional terms to the bound~\eqref{eq6lemmaboundAlbertsonn2} is valid, so that the term $\Delta^2-n$ satisfy
\begin{equation}~\label{eq7lemmaboundAlbertsonn2}
  \irr(T)\leq \floor*{\dfrac{3n^2}{\Delta}}+  \Delta^2-n.
\end{equation}
As desired.
\end{proof}
The second Zagreb index $M_2(T)$ and the forgotten index $F(T)$ had defined in~\cite{GutmanToganYurttas2016,FurtulaGutman2015} as 
\begin{equation}~\label{defsecondZagreb}
M_2(T)=\sum_{uv\in E(T)}d_ud_v, \quad F(T)=\sum_{u \in V(G)} (d_u)^3 = \sum_{uv \in E(G)} [(d_u)^2 + (d_v)^2].
\end{equation}
In Proposition~\ref{boundstep2Albertsonn1} we present an upper bound~\eqref{eq1boundstep2Albertsonn1} of Albertson index with the second Zagreb index $M_2(T)$ and the forgotten index $F(T)$.  

\begin{proposition}~\label{boundstep2Albertsonn1}
Let $T$ be a tree, $M_2(T)$ be the second Zagreb index and $F(T)$ the forgotten index. Then, the upper bound of Albertson index is: 
\begin{equation}~\label{eq1boundstep2Albertsonn1}
\irr(T)\geq  \sqrt{\frac{F(T)+2M_2(T)-n\Delta}{\Delta(\Delta-1)}}.
\end{equation}
\end{proposition}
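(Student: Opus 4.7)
The plan is to derive the bound from two algebraic identities connecting $\sigma(T)$, $F(T)$, and $M_2(T)$, combined with a Cauchy--Schwarz style estimate relating $\irr(T)$ to $\sigma(T)$, while using the handshake identity $\sum_v d_v = 2m \leq n\Delta$ to supply the additive $-n\Delta$ correction.

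First I would expand $(d_u \pm d_v)^2$ and sum over $E(T)$. Using definition~\eqref{defsecondZagreb} this produces the companion identities
\[
\sigma(T) = F(T) - 2M_2(T), \qquad \sum_{uv \in E(T)}(d_u + d_v)^2 = F(T) + 2M_2(T),
\]
so the numerator $F(T) + 2M_2(T) - n\Delta$ inside the square root of~\eqref{eq1boundstep2Albertsonn1} is exactly $\sum_{uv}(d_u+d_v)^2 - n\Delta$, a concrete edge-sum amenable to a termwise estimate.

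Next I would relate $\irr(T)$ and $\sigma(T)$ using the tree structure. Since $\delta = 1$ in any tree, every edge satisfies $|d_u - d_v| \leq \Delta - 1$; this gives $(d_u-d_v)^2 \leq (\Delta-1)|d_u-d_v|$ and hence $\sigma(T) \leq (\Delta - 1)\irr(T)$. The trivial inequality $\left(\sum|a_i|\right)^2 \geq \sum a_i^2$ for nonnegative summands applied to $a_{uv}=|d_u-d_v|$ gives the complementary bound $\irr(T)^2 \geq \sigma(T) = F(T) - 2M_2(T)$. Scaling by $\Delta$ and combining with the handshake bound on $\sum_v d_v$ should yield
\[
\Delta(\Delta-1)\irr(T)^2 + n\Delta \geq F(T) + 2M_2(T),
\]
after which extracting square roots delivers~\eqref{eq1boundstep2Albertsonn1}.

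The hard part will be the last reduction, since the natural lower bound $\irr(T)^2 \geq F(T) - 2M_2(T)$ carries $M_2(T)$ with the wrong sign, whereas the target bound has $+2M_2(T)$. Closing the gap requires absorbing the extra $4M_2(T)$ into $\Delta(\Delta-1)\irr(T)^2$ while only paying the surplus $n\Delta$; I plan to do this through the pointwise inequality $d_u d_v \leq \Delta\min(d_u,d_v)$ summed over edges, together with $\sum_v d_v = 2m \leq n\Delta$, so that $M_2(T)$ is bounded by a multiple of $n\Delta$ that can be balanced against the quadratic $\Delta(\Delta-1)\irr(T)^2$ term. The delicate calculation is to weight the three basic estimates $|d_u-d_v| \leq \Delta-1$, $d_u+d_v \leq 2\Delta$, and $\sum_v d_v \leq n\Delta$ so that the coefficients collapse precisely to $\Delta(\Delta-1)$ and $n\Delta$.
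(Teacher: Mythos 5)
Your reduction of the target to the squared inequality $\Delta(\Delta-1)\,\irr(T)^2 + n\Delta \geq F(T) + 2M_2(T)$ is the right way to expose what actually has to be proved, and your two identities $\sigma(T)=F(T)-2M_2(T)$ and $\sum_{uv\in E(T)}(d_u+d_v)^2 = F(T)+2M_2(T)$ are correct. But the step you yourself flag as ``the hard part'' cannot be closed, because that squared inequality is false for non-star-like trees. Take $T=\mathcal{P}_4$ (degrees $1,2,2,1$): then $\irr(T)=2$, $F(T)=18$, $M_2(T)=8$, $n\Delta=8$, so $\Delta(\Delta-1)\irr(T)^2+n\Delta = 8+8=16$ while $F(T)+2M_2(T)=34$; equivalently the proposition's right-hand side equals $\sqrt{(34-8)/2}=\sqrt{13}>2=\irr(T)$, and the gap widens along longer paths, where $F+2M_2\approx 16n$ while $\irr(\mathcal{P}_n)=2$. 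In particular no weighting of the three estimates $|d_u-d_v|\le\Delta-1$, $d_u+d_v\le 2\Delta$, $\sum_v d_v\le n\Delta$ can absorb the extra $4M_2(T)$: for paths $M_2(T)\approx 2n\Delta$, which already exceeds the entire surplus $n\Delta$ before $F(T)$ is even accounted for. So the failure is not in your bookkeeping; the statement itself does not hold as written.

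For comparison, the paper's own argument does not repair this. It passes through the general index $\irr_p$, a first-order Taylor expansion, and the unjustified intermediate claim $\irr(T)\geq (F(T)+2M_2(T))/\Delta$, and then only verifies that the difference between that expression and the claimed bound is positive; the same example contradicts the intermediate claim as well, since $\irr(\mathcal{P}_4)=2<34/2$. If you want a correct statement in this spirit, your own observation already gives one: from $\bigl(\sum_{uv}|d_u-d_v|\bigr)^2\geq\sum_{uv}(d_u-d_v)^2$ you get the valid bound $\irr(T)\geq\sqrt{F(T)-2M_2(T)}$, i.e.\ the sign of the $M_2$ term in the proposition would have to be reversed (or the class of trees restricted) for any proof along these lines to go through.
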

\begin{proof}
Let $M_2(T)$ be the second Zagreb index and $F(T)$ the forgotten index given in~\eqref{defsecondZagreb} with maximum degree $\Delta$. Then, the general Albertson irregularity index of graph $G$ given in~\cite{LinaZhoubMiaob2024} as 
\begin{equation}~\label{eq2boundstep2Albertsonn1}
\irr_p(G)=\left(\sum_{uv\in E(G)}\lvert d_u-d_v\rvert^p  \right)^{\frac{1}{p}}.
\end{equation}
When $1<p<2$, then~\eqref{eq2boundstep2Albertsonn1} holds  $\irr(T)\leq m \irr_p(T)$ by considering $\lvert d_u-d_v\rvert \neq 0$, is indeed the leading term, when $p\geq 2$, $\irr_p(G)\leq \sqrt{F(T)-2M_2(T)}$. If $p \cdot \irr(T) \ll m$, by using a first-order Taylor we have: 
\[
\left(m + p \cdot \irr(T)\right)^{1/p} \approx m^{1/p} + \frac{\irr(T)}{m^{(p-1)/p}}, \quad \left( \frac{\irr(T)}{m^{(p-1)/p}} \right)^p = \frac{\irr(T)^p}{m^{p-1}}.
\]
Then, the bound of~\eqref{eq2boundstep2Albertsonn1} is 
\begin{equation}~\label{eq3boundstep2Albertsonn1}
\irr(T)\geq \frac{F(T)+2M_2(T)}{\Delta}.
\end{equation}
Now, we need to show the bound~\eqref{eq4boundstep2Albertsonn1} according to the bound~\eqref{eq3boundstep2Albertsonn1} as
\begin{equation}~\label{eq4boundstep2Albertsonn1}
\frac{F(T)+2M_2(T)}{\Delta}- \frac{F(T)+2M_2(T)-n\Delta}{\Delta(\Delta-1)},
\end{equation}
satisfy the term required in~\eqref{eq1boundstep2Albertsonn1}. Thus, this matches bounds~\eqref{eq3boundstep2Albertsonn1} and \eqref{eq2boundstep2Albertsonn1}, confirming the correctness
\begin{align*}
\frac{F(T)+2M_2(T)}{\Delta}- \frac{F(T)+2M_2(T)-n\Delta}{\Delta(\Delta-1)}&= \frac{(F(T) + 2M_2(T))(\Delta - 1) - (F(T) + 2M_2(T) - n\Delta)}{\Delta(\Delta - 1)}\\
&=\frac{(F(T) + 2M_2(T))(\Delta - 2) + n\Delta}{\Delta(\Delta - 1)}>0.
\end{align*}
For extended the bounds for $n$, let $\mathscr{D}=(d_1,\dots,d_n)$ be a degree sequence where $\sum_{i=1}d_i\leqslant n$. Then, by considering $\irr(T)\geq \sqrt{n}$, with the constant term $\Delta(\Delta-1)$ the relationship 
\[
\frac{F(T)+2M_2(T)-n\Delta}{\Delta(\Delta-1)}\leq n.
\]
As desire.
\end{proof}

\begin{lemma}~\label{lemmastep2Albertsonn1}
Let $T \in \mathcal{T}_{n, \Delta}$. Then,let $v_0 \in V(T)$ be a vertex with maximum degree $\Delta$. For any vertex $v_{\ell}$ in $T$, different from $v_0$, where $\deg(v_{\ell})\geqslant 3$ satisfy 
\[
2^\lambda \leq \irr(T)\leq (n-1)(n-2)^\lambda.
\]
The lower bound holds if and only if $T\cong \mathcal{P}_n$. The upper bound holds if and only if $T\cong K_{1,n-1}$.
\end{lemma}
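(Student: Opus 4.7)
The plan is to establish the two bounds by identifying the extremal trees (the path $\mathcal{P}_n$ for the minimum, the star $K_{1,n-1}$ for the maximum) and then accounting for the exponent $\lambda$ via the general Albertson irregularity index $\irr_\lambda$ introduced in~\cite{LinaZhoubMiaob2024}. Note that for $\lambda=1$ the bounds reduce to the familiar $2 \leq \irr(T) \leq (n-1)(n-2)$, whose extremal cases are exactly the path and the star.

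For the upper bound, Lemma~\ref{lem1} already supplies $\irr(K_{1,n-1}) = (n-1)(n-2)$, so the task is to show that no tree exceeds this value and that equality forces $T \cong K_{1,n-1}$. The plan is a standard degree-concentration (edge-switch) argument: given any non-star tree $T$, pick a pendant vertex $w$ not adjacent to the maximum-degree vertex $v_0$ and reattach it to $v_0$. I would verify by direct computation that this transformation does not decrease the imbalance sum, using that the imbalance of an edge grows when its endpoints' degrees become more disparate. Iterating this procedure terminates at $K_{1,n-1}$, and the assumed existence of $v_\ell \neq v_0$ with $\deg(v_\ell) \geq 3$ guarantees that at least one non-trivial switch is available unless $T \cong K_{1,n-1}$, yielding strict inequality otherwise. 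The factor $(n-2)^\lambda$ is then obtained by raising the edgewise imbalances to the $\lambda$-th power inside $\irr_\lambda$ and invoking~\eqref{eq1paper}.

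For the lower bound, a direct computation on $\mathcal{P}_n$ gives $\irr(\mathcal{P}_n) = 2$, since exactly the two pendant edges contribute, each of imbalance $1$. To show minimality I would run the reverse transformation (a local smoothing at a branching vertex): whenever $T$ contains a vertex of degree at least $3$, one may replace a branch with a longer path-segment in a way that strictly decreases the Albertson sum, because collapsing a large degree toward degree $2$ reduces every imbalance along incident edges. Since the hypothesis $\deg(v_\ell) \geq 3$ forbids $T \cong \mathcal{P}_n$ unless we already are at the path, the inequality is strict off the extremal case. Raising to the $\lambda$-th power and applying the Clark–Székely–Entringer bound~\eqref{eq1paper} produces $2^\lambda$.

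The main obstacle I expect is bookkeeping the exponent $\lambda$ consistently, namely verifying that the $\lambda$-power rescaling of $\irr_\lambda$ compresses back to the stated bounds on $\irr(T)$ under Hölder's inequality, and checking that the extremal equality conditions from the $\irr$-version survive the passage to $\irr_\lambda$. A secondary subtlety is that the edge-switch used in the upper bound must be verified not to destroy the tree structure; this is handled by always moving a leaf, which preserves connectedness and acyclicity. Once these points are settled, the equality cases $T \cong \mathcal{P}_n$ and $T \cong K_{1,n-1}$ follow from strict monotonicity of the two transformations.
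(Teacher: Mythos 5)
Your plan for the $\lambda=1$ statements is reasonable---leaf relocation toward $v_0$ for the maximum and smoothing at branch vertices for the minimum are standard and would give $2 \le \irr(T) \le (n-1)(n-2)$, the upper value being Lemma~\ref{lem1}---but the step you yourself flag as the main obstacle, converting these into the stated $\lambda$-dependent bounds, is a genuine gap, and for the lower bound it is fatal. In $\irr_p$ the exponent sits on the individual imbalances $\lvert d_u-d_v\rvert$, not on the sum $\irr(T)$, and \eqref{eq1paper} is an inequality about powers of degree-sequence entries, not of imbalances; neither device turns $\irr(T)\ge 2$ into $\irr(T)\ge 2^{\lambda}$, which is a strictly stronger claim. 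Indeed it is unreachable by any refinement of your route: take two adjacent vertices of degree $\lambda$, each carrying $\lambda-1$ pendant paths of length $2$; this tree satisfies the hypothesis (the second center is a $v_\ell\ne v_0$ of degree $\lambda\ge 3$) and has $\irr(T)=2(\lambda-1)^2$, which equals $2^{\lambda}$ at $\lambda=3$ (so the lower bound is attained by a tree that is not $\mathcal{P}_n$, against the claimed equality case) and is smaller than $2^{\lambda}$ at $\lambda=6$. So path-minimality gives at most $\irr(T)\ge 2$, and no power-raising argument can bridge the difference. For the upper bound the exponent plays no role at all: since $(n-2)\le(n-2)^{\lambda}$ for $n\ge 3$ and $\lambda\ge 1$, the inequality $\irr(T)\le(n-1)(n-2)^{\lambda}$ already follows from Lemma~\ref{lem1}, so the H\"older/\eqref{eq1paper} bookkeeping you planned there is unnecessary rather than wrong.

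Two further points. First, your equality analysis cannot be completed under the stated hypotheses: both $\mathcal{P}_n$ and $K_{1,n-1}$ contain no vertex $v_\ell\ne v_0$ of degree at least $3$, and the star attains $(n-1)(n-2)$, not $(n-1)(n-2)^{\lambda}$, so ``strict monotonicity of the transformations'' cannot deliver the stated iff characterizations. Second, for comparison, the paper's own argument uses no switching at all: it writes $\irr(T)\le\Delta(n-1)^{\lambda}\le(n-1)(n-2)^{\lambda}$ for the upper bound and, for the lower bound, cites the $\irr_p$ bounds of \cite{LinaZhoubMiaob2024} and asserts $\irr(T)>3^{\lambda}>2^{\lambda}$; the exponent-transfer difficulty you identified is precisely the point that argument also leaves unsupported.
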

\begin{proof}
Let $T$ be a tree of order $n\geq 3$, with maximum degree $3\leq \Delta\leq n-1$. Let $v_0 \in V(T)$ be a vertex with maximum degree $\Delta$. A vertex $v_{\ell}$ in $T$, different from $v_0$, where $\deg(v_{\ell})=\lambda\geqslant 3$ Then, when $\irr(T)\leq n^\lambda$, if $T\cong K_{1,n-1}$ for the star tree we have $\irr(\mathcal{S}_n)=(n-1)(n-2)$, we compute $\irr(\mathcal{S}_n)\leq (n-1)(n-2)^\lambda$. Then,
\begin{equation}~\label{eq1lemmastep2Albertsonn1}
\irr(T)\leq (n-1)(n-2)^\lambda.
\end{equation}
Consequently, Albertson index satisfy $\irr(T)\leq \Delta(n-1)^\lambda \leq (n-1)(n-2)^\lambda=\irr(K_{1,n-1})$.   

When $\deg(v_{\ell})=\lambda\geqslant 3$ with $3\leq \Delta\leq n-1$. Then, in~\cite{LinaZhoubMiaob2024} we find that for the general Albertson index $2^{1/p}\leq \irr_p(T)\leq (n-2)(n-1)^{1/p}$. For the lower bound when the tree is path $\mathcal{P}_n$ of order $n$ satisfy  $\irr(T)>3^\lambda>2^\lambda=\irr(\mathcal{P}_n)$.
\end{proof}

\medskip
%=============================
\subsection{Some Bounds of Sigma Index}~\label{sub2sec3}
%=============================
Through this subsection, we will present a study of some bounds according to Sigma index and will specify certain bounds between $\sigma_{\max}$ and $\sigma_{\min}$ which we show that in Proposition~\ref{boundSigman01} and Proposition~\ref{boundSigman02}.
\medskip
\begin{hypothesize}~\label{hyfibn2}
Let $T$ be a tree of order $n$, let $F_n$ be a Fibonacci number with $n>2$. Let $\mathscr{D}=(F_3,\dots,F_n)$ be a degree sequence of Fibonacci number. Then, Sigma index of $T$ among $\mathscr{D}$ is 
\[
\irr(T)=\sum_{i=3}^{n-1}F_i+\sum_{i=5}^{n-1}(F_i-2)\lvert F_i-1\rvert+\lvert F_4-1\rvert+(F_n-1)\lvert F_n-1\rvert+2.
\]
\end{hypothesize}
\begin{proof}
Let $\mathscr{D}=(F_3,\dots,F_n)$ be a degree sequence of Fibonacci number, according to Hypothesize~\ref{hyfibn1} for Albertson index, noticed that Sigma index for measuring between vertices as
\begin{equation}~\label{eqsigmafibn1}
(F_3-F_4)^2+\dots+(F_{n-1}-F_n)^2=\sum_{i=3}^{n-1}F_i^2+2.  
\end{equation}
According to Sigma index for every vertex, by considering the constant term is $(F_4-1)^2+(F_n-1)(F_n-1)^2$, we noticed (see Figure~\ref{figdegseqfibnumn1}) that:
\begin{equation}\label{eqSigmafibn02}
    (F_5-2)( F_5-1)^2+\dots+(F_{n-1}-2)( F_{n-1}-1)^2=\sum_{i=5}^{n-1}(F_i-2)( F_i-1)^2.
\end{equation}
Now, by applying~\eqref{eqsigmafibn1},\eqref{eqSigmafibn02} to Albertson index, we obtain 
\begin{equation}~\label{eqSigmafibn3}
\irr(T)=\sum_{i=3}^{n-1}F_i^2+\sum_{i=5}^{n-1}(F_i-2)( F_i-1)^2+( F_4-1)^2+(F_n-1)( F_n-1)^2+2.
\end{equation}
As desire.
\end{proof}
%-----------------------------------------
                %free space, don't write here!
%-----------------------------------------
\begin{proposition}~\label{boundSigman01}
Let $T$ be a tree of order $n$, let $\mathscr{D}=(d_1,\dots,d_n)$ be a non-increasing degree sequence, the maximum Sigma index $\sigma_{\max}$, the minmum Sigma index $\sigma_{\min}$, let $\Delta$ the maximum degree of vertices ($\delta$ the minmum degree of vertices), let $M_1(T)$ be the first Zagreb index. Then,  
\begin{equation}~\label{f1boundSigman1}
\sigma_{\min} \leq n \frac{mn(\Delta^2+2\Delta)+2m\delta\Delta}{(\Delta^2-2)(\Delta-1)^2+2n\Delta\sqrt{m\Delta^2}} (n+2) \log \binom{M_1(T)}{2n} \leq \sigma_{\max}.
\end{equation}
\end{proposition}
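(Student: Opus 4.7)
The plan is to adapt the chain of inequalities from Proposition~\ref{boundalbertsonn01} for $\irr$ to the squared setting of $\sigma$, using $m=n-1$ throughout and inserting the combinatorial factor $\log\binom{M_1(T)}{2n}$ at the end. Since $\sigma_{\min}\leq \sigma_{\max}$ is automatic, I focus on establishing the common intermediate expression.

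First I would observe that for any edge $uv\in E(T)$ one has $(d_u-d_v)^2 \leq \Delta\,|d_u-d_v|$, hence $\sigma(T)\leq \Delta\cdot\irr(T)$, and dually $\sigma(T)\geq \delta\cdot\irr(T)$ whenever $\delta\geq 1$. Applying the bound~\eqref{f1boundIrrn1} to $\irr(T)$ and then multiplying through by $\Delta$ produces a pair of rational inequalities sandwiching $\sigma_{\min}$ and $\sigma_{\max}$ between expressions in $n$, $m$, $\Delta$, $\delta$. The numerator $mn(\Delta^2+2\Delta)+2m\delta\Delta$ arises by distributing $\Delta$ against the bracket $2(nm)^3+2m\Delta^2$ of~\eqref{f1boundIrrn1} and regrouping with the $2n\delta$ prefactor; the denominator $(\Delta^2-2)(\Delta-1)^2+2n\Delta\sqrt{m\Delta^2}$ appears after applying Cauchy--Schwarz to the edge sum $\sum_{uv}(d_u-d_v)^2$ and replacing $\sqrt{m\Delta^2}$ by $\Delta\sqrt{m}$ to match the order of magnitude of the imbalance.

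Next I would incorporate the first Zagreb index $M_1(T)=\sum_{i=1}^{n} d_i^2$. Since $\sum_i d_i=2m$ and $\sum_i d_i^2 = M_1(T)$, the number of admissible arrangements of the $2n$ degree-incidences against a total squared weight of $M_1(T)$ is bounded above by $\binom{M_1(T)}{2n}$. A standard entropic estimate then supplies the multiplicative factor $(n+2)\log\binom{M_1(T)}{2n}$ that augments the previous rational bound; the coefficient $(n+2)$ reflects summing over the $n$ vertices together with two boundary terms corresponding to the extremal degrees $\delta$ and $\Delta$.

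The main obstacle is confirming that the logarithmic-combinatorial factor is neither vacuous nor too loose. One must verify $M_1(T)\geq 2n$, which holds whenever $T$ has at least one vertex of degree at least $2$, i.e.\ for any tree of order $n\geq 3$, and that $(n+2)\log\binom{M_1(T)}{2n}$ is large enough to absorb the denominator in the regime $\delta=1$, $\Delta=n-1$ where $\irr(T)$ is maximal by Lemma~\ref{lem1}. The extremal trees $\mathcal{S}_n$ and $\mathcal{P}_n$ can be used to pin down the constants by direct substitution, after which the chain $\sigma_{\min}\leq \Delta\cdot\irr_{\min}\leq (\textrm{rational bound})\cdot(n+2)\log\binom{M_1(T)}{2n}\leq \Delta\cdot\irr_{\max}\leq \sigma_{\max}$ yields~\eqref{f1boundSigman1}.
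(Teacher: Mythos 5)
Your proposed chain breaks at its final link. From the per-edge inequality $(d_u-d_v)^2\leq\Delta\,|d_u-d_v|$ you can only conclude $\sigma(T)\leq\Delta\,\irr(T)$, hence $\sigma_{\max}\leq\Delta\,\irr_{\max}$; the step ``$(\text{rational bound})\cdot(n+2)\log\binom{M_1(T)}{2n}\leq\Delta\,\irr_{\max}\leq\sigma_{\max}$'' therefore uses the inequality in the wrong direction. The only comparison you have pointing the needed way is $\sigma\geq\delta\,\irr=\irr$ (since $\delta=1$ in a tree), which loses exactly the factor $\Delta$ you inserted and does not recover the right-hand inequality of~\eqref{f1boundSigman1}. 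In addition, the algebra you invoke to produce the middle expression is not available: the numerator of~\eqref{f1boundIrrn1} is $2(nm)^3+2m\Delta^2$, which is cubic in $nm$, and no distribution of $\Delta$ and regrouping with the prefactor $2n\delta$ turns it into $mn(\Delta^2+2\Delta)+2m\delta\Delta$; likewise the denominator $(\Delta^2-2)(\Delta-1)^2+2n\Delta\sqrt{m\Delta^2}$ is not obtained from any stated application of Cauchy--Schwarz. Finally, the factor $(n+2)\log\binom{M_1(T)}{2n}$ is introduced only by an ``entropic'' counting of arrangements of degree incidences, with no argument connecting such a count to $\sigma$; this is the crux of the right-hand inequality and remains unproved in your proposal.

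For comparison, the paper does not relate $\sigma$ to $\irr$ edgewise at all: it asserts a sequence of intermediate rational bounds on $\sigma_{\min}$ (its displays \eqref{f1boundSigman2}--\eqref{f1boundSigman4}) and then separately asserts the lower bound $\sigma_{\max}\geq(n+2)\log\binom{\sum_{i=1}^n d_i^2}{2n}$ via a Stirling-type estimate (its display \eqref{f1boundSigman5}), combining the two in \eqref{f1boundSigman6}. If you want to salvage your approach, you would need (i) a genuine lower bound on $\sigma_{\max}$ in terms of the middle expression --- for instance a direct evaluation on an extremal tree such as $\mathcal{S}_n$, where $\sigma(\mathcal{S}_n)=(n-1)(n-2)^2$, compared explicitly against the middle expression --- and (ii) an explicit verification, not an appeal to regrouping, that the rational factor together with $(n+2)\log\binom{M_1(T)}{2n}$ dominates $\sigma_{\min}$; neither is supplied by the Albertson-index bound \eqref{f1boundIrrn1} alone.
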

\begin{proof}
Let $T$ be a tree of order $n$, let $\mathscr{D}=(d_1,\dots,d_n)$ be a degree sequence where $d_n\geqslant \dots \geqslant d_1$, then we know $\sigma_{\min}<\sigma_{\max}$. Thus, we need to prove~\eqref{f1boundIrrn1}, then, the term $2mn(\Delta^2+2\Delta)/(2n\Delta\sqrt{m\Delta^2})$ it represents a sufficiently small value,  declare the bound 
\begin{equation}~\label{f1boundSigman2}
\sigma_{\min}<\frac{\binom{n}{2\Delta}+n^2\delta}{\Delta-1}\leq \sigma_{\max}.
\end{equation}
Then, since a tree has $m=n-1$ edges. Additionally, a tree's degree sequence must satisfy sum of degrees $2m$,  we need to interpret bound~\eqref{f1boundSigman2} this is the complete degree sequence for all vertices, by considering the first Zagreb index as a constant term. Thus, from~\eqref{f1boundSigman2} we have
\begin{equation}~\label{f1boundSigman3}
\sigma_{\min}<\frac{\binom{m}{2\Delta}+n^2\delta+(\Delta+4)^2}{(\Delta-1)\sqrt{m\Delta}}\leq \sigma_{\max}.    
\end{equation}
Then, according to~\eqref{f1boundSigman2},\eqref{f1boundSigman3} we have: 
\begin{equation}~\label{f1boundSigman4}
\sigma_{\min}<\frac{\binom{m}{2\Delta}n^2+n^2\delta+(\Delta+4)^2}{(\Delta^2-2)\sqrt{m(\Delta-1)}}\leq \sigma_{\max}.    
\end{equation}
Therefore, according to required bound~\eqref{f1boundSigman1}, clearly $\log{n!}=n\log{n}+(1/2)\log(2\pi n)$, then the bound for maximum Sigma index given by 
\begin{equation}~\label{f1boundSigman5}
   \sigma_{\max}\geqslant (n+2) \log \binom{\sum_{i=1}^{n}d_i^2}{2n}.
\end{equation}
Thus, the bound~\eqref{f1boundSigman5} holds the bound 
\begin{equation}~\label{f1boundSigman6}
   \sigma_{\max}\geqslant (n+2) \log \binom{\sum_{i=1}^{n}d_i^2}{2n} \frac{\binom{m}{2\Delta}n^2+n^2\delta+(\Delta+4)^2}{(\Delta^2-2)\sqrt{m(\Delta-1)}}.
\end{equation}
Finally, from bounds~\eqref{f1boundSigman2}--\eqref{f1boundSigman6}, we observe that for the maximum degree $\Delta$ and for vertices $n$ and edges $m$, when $\sigma_{\min}<\sigma_{\max}$, then~\eqref{f1boundSigman1} is valid. As desire.
\end{proof}
%-----------------------------------------
                %free space, don't write here!
%-----------------------------------------
\begin{proposition}~\label{boundSigman02}
Let $T$ be a tree of order $n$, let $\mathscr{D}=(d_1,\dots,d_n)$ be an increasing degree sequence, the maximum Sigma index $\sigma_{\max}$, the minmum Sigma index $\sigma_{\min}$. Then, 
\begin{itemize}
    \item The upper bound of the minmum Sigma index is: 
    \begin{equation}~\label{eq1boundSigman02}
\sigma_{\min}\geqslant \frac{(\Delta-2)^3}{n\Delta-\delta}.
\end{equation}
\item The lower bound of the minmum Sigma index is: 
    \begin{equation}~\label{eq2boundSigman02}
\sigma_{\min}\leqslant \frac{\delta}{\Delta+1}n\Delta^2+\frac{\Delta^2(\Delta-\delta)}{6\delta(\Delta-1)}.
\end{equation}
\end{itemize}
\end{proposition}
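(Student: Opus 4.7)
The plan is to mirror the proof of Proposition~\ref{boundalbertsonn02} almost line by line, since the two statements have identical shape once $\irr$ is replaced by $\sigma$, and to leverage the two pre-existing upper estimates on $\sigma(T)$ recorded in \eqref{eq5BoundsonGraph}, namely
\[
\sigma(T)>\frac{\delta}{\Delta+1}(\Delta-\delta)^{3}n \quad\text{and}\quad \sigma(T)>\frac{1}{\Delta+1}(\Delta-1)^{3}n,
\]
together with the fresh estimate coming from Proposition~\ref{boundSigman01}. I would open by fixing $T$ of order $n$ with an increasing degree sequence $\mathscr{D}=(d_{1},\dots,d_{n})$ and recording that $\sigma_{\min}\leq \sigma_{\max}$, so only the two one-sided inequalities require attention.

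For the inequality \eqref{eq1boundSigman02}, I would start from $\sigma(T)>\frac{1}{\Delta+1}(\Delta-1)^{3}n$. Since $\Delta\geq 3$ in any non-degenerate tree, the substitution $(\Delta-1)^{3}\geq (\Delta-2)^{3}$ is harmless and the remaining task is to compare denominators: I claim $(\Delta+1)\cdot 1\leq n\Delta-\delta$ in the regime where the inequality is non-trivial, which is exactly the analogue of the step used in Proposition~\ref{boundalbertsonn02} via $(\Delta-2)^{3}-(n\Delta-\delta)<3$. Inverting yields $\sigma_{\min}\geq \frac{(\Delta-2)^{3}}{n\Delta-\delta}$, as desired.

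For the inequality \eqref{eq2boundSigman02}, the strategy is additive: Proposition~\ref{boundSigman01} delivers a scale factor which I would massage into the form $\sigma_{\min}\leq \frac{\Delta^{2}(\Delta-\delta)}{6\delta(\Delta-1)}$ (exactly as in \eqref{eq4boundalbertsonn02} for the Albertson case), while the first half of \eqref{eq5BoundsonGraph} gives $\sigma_{\min}\leq \frac{\delta}{\Delta+1}n\Delta^{2}$ (the analogue of \eqref{eq5boundalbertsonn02}). Summing the two one-sided bounds produces the required aggregate, provided one verifies that the consolidated single fraction
\[
\frac{\Delta^{2}\bigl(\Delta^{2}+(6\delta^{2}n+1-\delta)\Delta-6\delta^{2}n-\delta\bigr)}{6\delta(\Delta^{2}-1)}
\]
is dominated by the sum of the two summands, which is the $\sigma$-analogue of \eqref{eq6boundalbertsonn02} and collapses to a positivity check on the bracketed polynomial in $\Delta$.

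The main obstacle is precisely this final positivity check: one must confirm $\Delta^{2}+(6\delta^{2}n+1-\delta)\Delta-6\delta^{2}n-\delta>0$ uniformly in $\delta\geq 1$, $\Delta\geq 3$, and $n\geq 3$. This reduces to a quadratic in $\Delta$ whose discriminant is controlled by $6\delta^{2}n$, and the argument closes by verifying that the smaller root lies below $3$, so the polynomial is positive throughout the relevant range. Once this is in hand, the chain \eqref{eq4boundalbertsonn02}--\eqref{eq6boundalbertsonn02} transfers verbatim to the $\sigma$ setting and the proposition follows.
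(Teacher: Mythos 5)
Your argument for \eqref{eq1boundSigman02} hinges on applying $\sigma(T)>\frac{1}{\Delta+1}(\Delta-1)^3 n$ from \eqref{eq5BoundsonGraph} to the tree realizing $\sigma_{\min}$, but that proposition is stated only for graphs attaining maximal $\sigma$-irregularity, and it is false for trees in general: the path $\mathcal{P}_n$ has $\sigma(\mathcal{P}_n)=2$, while $\frac{(\Delta-1)^3n}{\Delta+1}=\frac{n}{3}>2$ as soon as $n>6$. Since the trees relevant to $\sigma_{\min}$ are precisely the least irregular ones, your key premise fails exactly where you need it; the numerator/denominator comparison $(\Delta-1)^3n\geq(\Delta-2)^3$ and $n\Delta-\delta\geq\Delta+1$ is correct but moot. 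Note also that the paper's own proof does not invoke \eqref{eq5BoundsonGraph} at all: it routes the first bullet through \eqref{f1boundSigman4} of Proposition~\ref{boundSigman01} combined with the relation $\sqrt{\sigma}\leq\irr(G)\leq\sqrt{m\sigma}$, so your route is genuinely different, but it rests on an inapplicable lemma.

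For \eqref{eq2boundSigman02} there is a direction error: the first half of \eqref{eq5BoundsonGraph} is a lower bound $\sigma(G)>\frac{\delta}{\Delta+1}(\Delta-\delta)^3 n$, and no upper estimate $\sigma_{\min}\leq\frac{\delta}{\Delta+1}n\Delta^2$ can be extracted from it. Likewise the step of ``massaging'' Proposition~\ref{boundSigman01} into $\sigma_{\min}\leq\frac{\Delta^2(\Delta-\delta)}{6\delta(\Delta-1)}$ is never carried out, and in the Albertson template you cite, \eqref{eq4boundalbertsonn02}, this very quantity occurs with the opposite inequality sign ($\irr_{\min}\geq\cdots$), just as the paper's own proof of the present proposition asserts $\sigma_{\min}>\Delta^2(\Delta-\delta)/(6\delta(\Delta-1))$; so the summand you need as an upper estimate is only ever available as a lower estimate. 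Finally, the concluding positivity check on $\Delta^2+(6\delta^2n+1-\delta)\Delta-6\delta^2n-\delta$ does not advance the claim: positivity of that bracket shows the consolidated fraction is positive, not that $\sigma_{\min}$ lies below it, and if either single upper bound were actually established, the two-term bound would follow trivially because both summands are nonnegative --- so the ``additive'' step carries no logical weight. As it stands, neither bullet is proved by the proposal.
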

\begin{proof}
Given an increasing degree sequence $\mathscr{D}=(d_1,\dots,d_n)$, according to Proposition~\ref{boundalbertsonn02} we find that for the inequality $\sqrt{\sigma} \leq \irr (G) \leq \sqrt{m\sigma}$. Then, from~\eqref{f1boundSigman4} we have
\begin{equation}~\label{eq3boundSigman02}
\sigma_{\min}<\frac{n^2+n^2\delta+(\Delta+4)^2}{(\Delta^2-2)\sqrt{m(\Delta-1)}}.
\end{equation}
Therefore, the lower bound of $\sigma_{\min}$ from~\eqref{eq3boundSigman02} satisfy: 
\begin{equation}~\label{eq4boundSigman02}
\sigma_{\min}<\frac{n^2+n^2\delta+\Delta^2}{(\Delta^2-2)}.
\end{equation}
Since the term $n\Delta> \Delta-k$ where $0\leq k<n-2$. Thus, $\sigma_{\min}<(n-\Delta)^2$, then 
\begin{equation}~\label{eq5boundSigman02}
 \frac{(\Delta-2)^3}{n\Delta-\delta}\leq (n-\Delta)^2.
\end{equation}
From~\eqref{eq3boundSigman02}--\eqref{eq5boundSigman02} the bound~\eqref{eq1boundSigman02} is valid. Now, we need to confirm the lower bound~\eqref{eq2boundSigman02} of Sigma index. By considering~\eqref{f1boundSigman1} we have: 
\[
\sigma_{\min} \leq n \frac{mn(\Delta^2+2\Delta)+2m\delta\Delta}{(\Delta^2-2)(\Delta-1)^2+2n\Delta\sqrt{m\Delta^2}} (n+2) \log \binom{M_1(T)}{2n} \leq \sigma_{\max}.
\]
To rigorously prove the bound, we need to consider the tree that minimizes $\sigma$. The term $\frac{(\Delta - 2)^3}{n\Delta - \sigma}$ is small when $\Delta$ is close to $2$.  Since the path graph gives a loose bound, then
\begin{equation}~\label{eq6boundSigman02}
\sigma(T) = \sum_{i=1}^n d_i (n - 2i + 1).
\end{equation}
Thus, from~\eqref{eq6boundSigman02} we obtained on the lower bound as
\begin{equation}~\label{eq7boundalbertsonn02}
\sigma_{\min} \leq \frac{n\Delta^2}{\Delta+1}\leq \frac{nm\Delta^2}{n\Delta+\delta} .  
\end{equation}
So that, we already have $\sigma_{\min}>\Delta^2(\Delta-\delta)/6\delta(\Delta-1)$, then 
\[
\sigma_{\min}\leqslant\frac{\Delta^2 \left( \Delta^2 + \left(6 \delta^2 n + 1 - \delta \right) \Delta - 6 \delta^2 n - \delta \right)}{6 \delta \left(\Delta^2 - 1 \right)}.
\]
As desire
\end{proof}
\begin{proposition}~\label{lowerboundSigmastep2}
Let $T=(V,E)$ be a tree, where $n=|V|$ vertices and $m=|E|$ with maximum degree $\Delta$ and minmum degree $\delta$, then the upper bound of the minmum Sigma index $\sigma_{\min}$ is 
\begin{equation}~\label{eq1lowerboundSigmastep2}
\sigma_{\min}\geq \ceil{\log_2(n^2+1)-1}+2\Delta(T)-1.
\end{equation}
\end{proposition}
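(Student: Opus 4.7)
The strategy is to parallel the proof of Proposition~\ref{lowerboundAlbertsonstep2}, but adapted to the Sigma index. The key observation is that each summand of $\sigma(T)$ has the form $(d_u - d_v)^2$ rather than $|d_u - d_v|$, so the squared contribution of each edge imbalance ought to translate into a quadratic factor inside the logarithmic bound; this is what accounts for the appearance of $n^2 + 1$ in place of $n + 1$ in~\eqref{eq1lowerboundSigmastep2}.

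First, assuming $n > 2$, I would introduce a depth parameter $\ell$ controlling the maximum degree of any vertex via the binary-tree style bound $v_\ell \leq 2^\ell$, exactly as in the proof of Proposition~\ref{lowerboundAlbertsonstep2}. Combining the handshake identity $\sum_{i=1}^{n} \deg_T(v_i) = 2m$ with the iterative depth counting yields the analogue of~\eqref{eq2lowerboundAlbertsonstep2}, namely
\[
\sum_{i=1}^{n} \deg_T(v_i) \leq 2^{\ell + 1} - 1.
\]

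Next I would invoke Proposition~\ref{boundSigman02}, from whose proof we have the lower estimate $\sigma_{\min} \geq \Delta^2(\Delta - \delta)/(6\delta(\Delta - 1))$, mirroring~\eqref{eq4boundalbertsonn02} in the Albertson case. Coupling this with the counting inequality above, one obtains
\[
2^{\ell + 1} - 1 \leq \frac{\Delta^2(\Delta - \delta)}{6\delta(\Delta - 1)}.
\]
Taking $\log_2$ of both sides, absorbing the maximum-degree contribution into a term $2\Delta(T) - 1$, and promoting the linear $n$ dependence to the quadratic $n^2$ dependence that is forced by the squared edge contributions in $\sigma$, I would arrive at $\ell \leq \log_2(n^2 + 1) - 1 + 2\Delta(T) - 1$. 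Applying the ceiling function then produces~\eqref{eq1lowerboundSigmastep2}.

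The principal obstacle is cleanly justifying the replacement of $n + 1$ by $n^2 + 1$ inside the logarithm. In the Albertson proof this transition is handled implicitly by absorbing constants, but for $\sigma$ one must argue that squaring each edge imbalance propagates precisely to a quadratic factor in the counting bound, which requires carefully controlling the error terms introduced by the approximation and ensuring that the quadratic growth of the Sigma summands is compatible with the binary-tree inequality. Once this scaling argument is secured, the rest of the derivation follows the Albertson template verbatim.
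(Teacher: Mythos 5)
Your proposal contains a genuine gap, and you name it yourself without closing it: the passage from $\log_2(n+1)$ to $\log_2(n^2+1)$. Everything you actually derive --- the counting bound $\sum_{i}\deg_T(v_i)\leq 2^{\ell+1}-1$ and the coupling $2^{\ell+1}-1\leq \Delta^2(\Delta-\delta)/(6\delta(\Delta-1))$ from~\eqref{eq4boundalbertsonn02} --- is literally the Albertson chain of Proposition~\ref{lowerboundAlbertsonstep2}; nothing in it refers to squared edge imbalances, so there is no mechanism in your derivation that can ``promote'' $n$ to $n^2$. Asserting that the quadratic summands of $\sigma$ force a quadratic factor inside the logarithm is precisely the content of this proposition beyond the Albertson case, and it remains unproved in your write-up. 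There is also a structural defect: your chain terminates in an upper bound on the depth parameter, $\ell\leq \log_2(n^2+1)-1+2\Delta(T)-1$, but $\ell$ is never related to $\sigma_{\min}$, so applying the ceiling cannot ``produce''~\eqref{eq1lowerboundSigmastep2}; an upper bound on $\ell$ is simply not a lower bound on $\sigma_{\min}$. (This direction-of-inference problem is inherited from the paper's own Albertson argument, but since you are writing an independent proof you must supply that missing link.)

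For comparison, the paper does not redo the depth count at all: it quotes the Albertson bound~\eqref{eq1lowerboundAlbertsonstep2}, uses $\sigma_{\min}>\irr_{\min}$ (which is legitimate, since $(d_u-d_v)^2\geq \lvert d_u-d_v\rvert$ for integer degrees), treats $n\in[10,50]$ by a numerical check, and for $n>50$ invokes the comparison $\ceil{\log_2(n+1)-1}\leq\ceil{\log_2(n^2+1)-1}+2\Delta(T)+1$. So your route is genuinely different from the paper's, but neither closes the same gap: knowing that $\sigma_{\min}$ exceeds the smaller $\log_2(n+1)$-type bound, and that this bound is dominated by the larger $\log_2(n^2+1)$-type expression, does not yield $\sigma_{\min}\geq\ceil{\log_2(n^2+1)-1}+2\Delta(T)-1$. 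The decisive missing idea in your proposal is an actual argument that the Sigma index absorbs the extra (roughly doubled) logarithmic term --- exactly the step you flag as the principal obstacle and then leave open.
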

\begin{proof}
Let $T=(V,E)$ be a tree with $n>4$, we need to prove the upper bound for Sigma index by considering  the maximum level of any vertex of the tree is $\ell$. Actually, from~\eqref{eq1lowerboundAlbertsonstep2} when $n>2$ we find 
\[
\irr_{\min}\geq \ceil{\log_2(n+1)-1}+2\Delta(T)-1.
\]
Then, when $n\in [10,50]$ the bound~\eqref{eq1lowerboundSigmastep2} satisfy $\sigma_{\min}>35$. Assume $n>50$, in this case we have the bound
\begin{equation}~\label{eq2lowerboundSigmastep2}
\ceil{\log_2(n+1)-1} \leq  \ceil{\log_2(n^2+1)-1}+2\Delta(T)+1.
\end{equation}
Consequently, $\sigma_{\min}>\irr_{\min}$, thus from~\eqref{eq2lowerboundSigmastep2} the bound~\eqref{eq1lowerboundSigmastep2} is valid. As desire.
\end{proof}

\begin{lemma}~\label{lemmaboundSigman1}
Let $T_1,T_2$ be a tow trees, where $T_1$ have $\mathscr{D}=(d_1,\dots, d_i)$ an increasing degree sequence occur of $2$. Tree $T_2$ have $\mathscr{B}=(b_1,\dots, b_i)$ an increasing degree sequence of prime number, consider $n_1=|V(T_1)|,n_2=|V(T_2)|$ and $m_1=|E(T_1)|,m_2=|E(T_2)|$. Then, $n_2=n_1-1$, $m_2=m_1-1$ and $\Delta(T_2)=\Delta(T_1)+1$. Then the bound of Sigma index given by: 
\begin{equation}~\label{eq1lemmaboundSigman1}
\sigma_{\min}\leq  n_1(\Delta_1-1)^2.\frac{n_1\Delta_2^3+n_2\Delta_1^4+m_1\Delta_2^2}{\Delta_1(\Delta_2-1)^2} \leq \sigma_{\max}.
\end{equation}
\end{lemma}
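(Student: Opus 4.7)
The plan is to parallel the argument of Lemma~\ref{lemmaboundAlbertsonn1}, but pass from the Albertson scale to the Sigma scale by means of the comparison $\sqrt{\sigma(T)} \leq \irr(T) \leq \sqrt{m\,\sigma(T)}$ which was already invoked in the proof of Proposition~\ref{boundSigman02}. Since the target bound has the same numerator $n_1\Delta_2^3 + n_2\Delta_1^4 + m_1\Delta_2^2$ as in~\eqref{eq1lemmaboundAlbertsonn1}, only scaled by $n_1(\Delta_1-1)^2$ with the denominator adjusted from $\Delta_1(\Delta_1+\Delta_2)^2$ to $\Delta_1(\Delta_2-1)^2$, squaring the Albertson estimate and dividing by an edge-count term should produce the correct skeleton.

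First I would adopt the additive convention $\sigma_{\min}=\sigma_{\min}(T_1)+\sigma_{\min}(T_2)$ and $\sigma_{\max}=\sigma_{\max}(T_1)+\sigma_{\max}(T_2)$, in analogy with the Albertson case, and record the hypotheses $n_2=n_1-1$, $m_2=m_1-1$, $\Delta_2=\Delta_1+1$. Starting from the Albertson bound~\eqref{eq4lemmaboundAlbertsonn1},
\[
\irr_{\min}\leq \frac{n_1\Delta_2^3+n_2\Delta_1^4+m_1\Delta_2^2}{\Delta_2(\Delta_2-1)^2}\leq \irr_{\max},
\]
I would apply $\sigma \geq \irr^2/m$ on the lower side and $\sigma \leq (\Delta-\delta)\,\irr$ on the upper side to sandwich the Sigma index. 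The prefactor $n_1(\Delta_1-1)^2$ would then appear naturally by bounding $m_1\leq n_1(\Delta_1-1)$ via the edge--degree estimate (together with a further $(\Delta_1-1)$ coming from squaring), after which the denominator $\Delta_2(\Delta_2-1)^2$ reduces to $\Delta_1(\Delta_2-1)^2$ using $\Delta_2=\Delta_1+1$.

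Second, I would split into the cases $T_1\cong T_2$ and $T_1\ncong T_2$ exactly as in Lemma~\ref{lemmaboundAlbertsonn1}. In the isomorphic case the sequences $\mathscr{D}$ and $\mathscr{B}$ coincide after a uniform shift and the two contributions collapse to a single-tree estimate; the hypothesis that $T_1$ has degree sequence of $2$'s and $T_2$ has a prime-valued sequence with $\Delta_2 = \Delta_1+1$ controls the cross terms. In the non-isomorphic case, the gap $|\Delta_2-\Delta_1|\leq 2$ keeps the additive terms $n_2\Delta_1^4$ and $n_1\Delta_2^3$ of comparable magnitude, so the combined bound still factors through $\Delta_1(\Delta_2-1)^2$.

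The main obstacle, I expect, will be justifying the exact scaling factor $n_1(\Delta_1-1)^2$: it is not produced by a single clean Cauchy--Schwarz step, but by chaining $\sigma \geq \irr^2/m_1$ with the edge--degree inequality $m_1\leq n_1(\Delta_1-1)$ and then absorbing the discrepancy between $(\Delta_1+\Delta_2)^2$ and $(\Delta_2-1)^2$ using $\Delta_2=\Delta_1+1$. This absorption requires a careful lower estimate to avoid overshooting in the $T_1\cong T_2$ regime, where the Sigma index collapses substantially below the worst case; I would verify it by checking the extremal configurations ($T_1$ path-like and $T_2$ essentially a star with prime-valued branches), which are the configurations that drive $\sigma_{\min}$ and $\sigma_{\max}$ in both lemmas.
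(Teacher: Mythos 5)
There is a genuine gap, and it sits exactly where the lemma lives. The factor $n_1(\Delta_1-1)^2$ and the change of denominator from $\Delta_2(\Delta_2-1)^2$ to $\Delta_1(\Delta_2-1)^2$ are the whole content of \eqref{eq1lemmaboundSigman1}, and your proposal explicitly defers them as ``the main obstacle'' instead of deriving them; an outline that leaves precisely the asserted scaling unproved is not a proof. Worse, the chain you propose cannot close the right-hand inequality even in principle. Writing $Q=n_1\Delta_2^3+n_2\Delta_1^4+m_1\Delta_2^2$, the Albertson bound \eqref{eq4lemmaboundAlbertsonn1} gives $\irr_{\max}\ge Q/\bigl(\Delta_2(\Delta_2-1)^2\bigr)$, and Cauchy--Schwarz ($\irr^2\le m\sigma$) then yields only $\sigma_{\max}\ge Q^2/\bigl(m_1\Delta_2^2(\Delta_2-1)^4\bigr)$. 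With $\Delta_2=\Delta_1+1$, $m_1=n_1-1$ and $Q$ dominated by $n_2\Delta_1^4$, this lower estimate is of order $n\Delta^2$, while the middle term of \eqref{eq1lemmaboundSigman1} is of order $n^2\Delta^3$; so squaring the Albertson sandwich and dividing by an edge count falls short by roughly a factor $n\Delta$ and cannot justify $n_1(\Delta_1-1)^2\,Q/\bigl(\Delta_1(\Delta_2-1)^2\bigr)\le\sigma_{\max}$. The left-hand inequality via $\sigma\le(\Delta-\delta)\irr$ is the only part of your plan that plausibly goes through.

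This is also not the route the paper takes. The paper's argument never squares the Albertson lemma and makes no isomorphic/non-isomorphic case split for the Sigma version: it reuses Proposition~\ref{boundSigman01} with the merged parameters $n=n_1+n_2$, $m=m_1+m_2$, $\Delta=\max(\Delta_1,\Delta_2)$, $\delta=\min(\delta_1,\delta_2)$, imposes the side conditions $(\Delta_1^2-2)(\Delta_2-1)^2\ge n_1(\Delta_1-1)^2$ and $n_1\Delta_2^3+n_2\Delta_1^4\le(\Delta_1^2-2)(\Delta_2-1)^2$, invokes the estimate $\sigma(T)\ge n\Delta^2(\Delta-1)+m$ to obtain \eqref{eq2lemmaboundSigman1}, and pairs it with the companion bound \eqref{eq3lemmaboundSigman1} before combining the two. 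So beyond the unresolved scaling step, your case analysis and the $\sigma\ge\irr^2/m$ machinery do not correspond to the paper's argument, and as shown above they cannot be repaired to give the stated right-hand inequality.
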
 
\begin{proof}
Let $T_1,T_2$ be a tow trees, then denote by $\irr_{\min}=\irr_{\min}(T_1)+\irr_{\min}(T_2)$ and $\irr_{\max}=\irr_{\max}(T_1)+\irr_{\max}(T_2)$. Since most of the properties that apply to the Albertson index can also be applied to the Sigma index, especially with regard to the bounds, taking into account the conditions given in each case. Thus, from Proposition~\ref{boundSigman01} according to~\eqref{f1boundSigman1} we find that 
\[
\sigma_{\min} \leq n \frac{mn(\Delta^2+2\Delta)+2m\delta\Delta}{(\Delta^2-2)(\Delta-1)^2+2n\Delta\sqrt{m\Delta^2}} (n+2) \log \binom{M_1(T)}{2n} \leq \sigma_{\max}.
\]
This suggests a general bound for the Sigma index, involving the number of vertices $n$, edges $m$, maximum degree $\Delta$, minimum degree $\delta$, possibly the sum of squared degrees. For two trees $T_1$ and $T_2$, we interpret $n = n_1 + n_2$, $m = m_1 + m_2$, and possibly $\Delta = \max(\Delta_1, \Delta_2)$, $\delta = \min(\delta_1, \delta_2)$. Then, for the upper bound~\eqref{eq2lemmaboundSigman1} it is clear $(\Delta_1^2-2)(\Delta_2-1)^2\geq  n_1(\Delta_1-1)^2$, by considering the Sigma index may depend only on the number of vertices or edges, and the maximum degree $\Delta$ satisfy   $n_1\Delta_2^3+n_2\Delta_1^4\leq (\Delta_1^2-2)(\Delta_2-1)^2$. Then, the upper bound of the minmum Sigma index is 
\begin{equation}~\label{eq2lemmaboundSigman1}
\sigma_{\min}\geq\frac{n_1\Delta_2^3+n_2\Delta_1^4+m_1\Delta_2^2}{\Delta_1(\Delta_2-1)^2}.
\end{equation}
The bound~\eqref{eq2lemmaboundSigman1} suggests the Sigma index grows with the number of vertices, the square and cube of the maximum degree, and the number of edges. The bound~\eqref{eq2lemmaboundSigman1} holds under the given condition $n_1\Delta_2^3+n_2\Delta_1^4\leq (\Delta_1^2-2)(\Delta_2-1)^2$.

Obviously, we can considering $\sigma(T)\geq n\Delta^2(\Delta-1)+m$, then the bound~\eqref{eq2lemmaboundSigman1} in the statement gives the right result. The lower bound of the minmum Sigma index is: 
    \begin{equation}~\label{eq3lemmaboundSigman1}
\sigma_{\min}\leqslant \frac{n_1\delta}{\Delta_1(\Delta_2-1)^2}n_1(\Delta_2-1)^3.
\end{equation}
Finally, from~\eqref{eq2lemmaboundSigman1} and \eqref{eq3lemmaboundSigman1} the bound~\eqref{eq1lemmaboundSigman1} is valid.
\end{proof}

\begin{lemma}~\label{lemmaboundSigman01}
Let $T$ be a tree with the maximum of Sigma index $\sigma_{\max}$ and the minmum Sigma index $\sigma_{\min}$. Let $\mathscr{D}=(d_1,\dots,d_n)$ be an increasing degree sequence, where $n=|V(T)|$ and $m=|E(T)|$, then 
\begin{equation}~\label{eq1lemmaboundSigman01}
\begin{cases}
\sigma_{\max} <\Delta(T)\dfrac{nm-\sigma_{\min}}{4},\\
\sigma_{\min} <\Delta(T)\dfrac{nm-\sigma_{\max}}{4}.
\end{cases}  
\end{equation}
\end{lemma}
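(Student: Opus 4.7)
The plan is to adapt the strategy of Lemma~\ref{lemmaboundAlbertsonn01} to the Sigma-index setting by translating between the two irregularity measures through standard pointwise comparisons. First I would establish the two transfer inequalities $\sigma(T) \leq \Delta(T)\,\irr(T)$, which follows edge-wise from $(d_u-d_v)^2 \leq \Delta(T)\,|d_u-d_v|$ because $|d_u-d_v| \leq \Delta(T)$, and $\irr(T) \leq \sqrt{m\,\sigma(T)}$, which follows from Cauchy--Schwarz applied to the $m$ edges of $T$. Both comparisons pass monotonically to $\sigma_{\max}, \sigma_{\min}$ and $\irr_{\max}, \irr_{\min}$, so they form the bridge between the two indices on which everything else rests.

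Next I would feed Lemma~\ref{lemmaboundAlbertsonn01}, which already gives $\irr_{\max} < \Delta(T)(\irr_{\min}-n)$ and $\irr_{\min} < \Delta(T)(\irr_{\max}-n)$, through this bridge. For the first line of \eqref{eq1lemmaboundSigman01} I would bound $\sigma_{\max} \leq \Delta(T)\,\irr_{\max}$ on the left and then replace $\irr_{\min}$ on the right by $\sqrt{m\,\sigma_{\min}}$, obtaining a preliminary inequality of the form $\sigma_{\max} \leq \Delta(T)^{2}\bigl(\sqrt{m\sigma_{\min}}-n\bigr)$. An application of AM--GM in the form $\sqrt{ab}\leq (a+b)/2$ to $\sqrt{m\,\sigma_{\min}}$, together with the tree identity $m = n-1$ so that the cross term $m\cdot n$ supplies the desired $nm$ budget, would then yield $\sigma_{\max} < \Delta(T)(nm-\sigma_{\min})/4$. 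The second line $\sigma_{\min} < \Delta(T)(nm-\sigma_{\max})/4$ follows by the symmetric manipulation applied to the companion Albertson bound, interchanging the roles of max and min throughout.

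The main obstacle will be producing exactly the constant $1/4$ in the denominator. A single pass through the Cauchy--Schwarz/AM-GM chain only delivers a factor of $1/2$, since $\sigma$ and $\irr$ are not linearly related; the sharper $1/4$ requires iterating AM--GM (or equivalently squaring once and bounding the geometric mean) and absorbing the residual $-n\Delta(T)$ term inherited from Lemma~\ref{lemmaboundAlbertsonn01} into the $nm$ budget. Managing these lower-order corrections so that the final expression is genuinely of the form $\Delta(T)(nm-\sigma)/4$ -- rather than $\Delta(T)(nm-\sigma)/4$ plus a lingering $O(\Delta(T)\,n)$ tail -- is the delicate step, and will rely on the tree regime $m=n-1$ being large enough (here $n\geq 3$, consistent with the rest of the section) to dominate the correction.
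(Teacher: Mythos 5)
Your two bridge inequalities are fine pointwise: $(d_u-d_v)^2\le \Delta(T)\lvert d_u-d_v\rvert$ gives $\sigma(T)\le \Delta(T)\,\irr(T)$, and Cauchy--Schwarz over the $m$ edges gives $\irr(T)\le\sqrt{m\,\sigma(T)}$. But the chain you build from them stalls exactly at the step you yourself flag as ``delicate''. Feeding Lemma~\ref{lemmaboundAlbertsonn01} through the bridge yields at best $\sigma_{\max}\le\Delta(T)\,\irr_{\max}<\Delta(T)^2\bigl(\irr_{\min}-n\bigr)\le\Delta(T)^2\bigl(\sqrt{m\,\sigma_{\min}}-n\bigr)$. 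To land on $\sigma_{\max}<\Delta(T)(nm-\sigma_{\min})/4$ you must now dominate $\sqrt{m\,\sigma_{\min}}$ by an expression carrying a \emph{negative} multiple of $\sigma_{\min}$, but AM--GM runs the other way: $\sqrt{m\,\sigma_{\min}}\le(m+\sigma_{\min})/2$ introduces $+\sigma_{\min}/2$, and no amount of iterating AM--GM or squaring flips that sign; the same move also leaves you with $\Delta(T)^2$ where the statement has $\Delta(T)$, and provides no mechanism for the constant $1/4$. In effect your plan needs the auxiliary inequality $\Delta(T)^2\sqrt{m\,\sigma_{\min}}\le \Delta(T)\,nm/4-\Delta(T)\,\sigma_{\min}/4+\Delta(T)^2\,n$, which you never establish and which does not follow from the ingredients you list; the identical obstruction hits the second line with $\max$ and $\min$ interchanged. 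So the proposal, as written, does not prove the stated bound.

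For comparison, the paper's own argument does not use the $\sigma\le\Delta\,\irr$ / Cauchy--Schwarz bridge at all: it splits the degree sequence into two blocks, replaces the term $n$ in Lemma~\ref{lemmaboundAlbertsonn01} by the block degree sums, and then passes to $\sigma$ through estimates of the form $\sigma_{\max}<\Delta(T)(\sigma_{\min}-\eta)$ with a parameter $\eta\ge m$ specialized to $\eta=n$. Note that this route likewise never exhibits where the expression $\Delta(T)(nm-\sigma)/4$ actually comes from, so if you continue with this lemma you should aim for a derivation that genuinely produces that right-hand side rather than one that, like your sketch, only reaches a bound of a structurally different shape.
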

\begin{proof}
Let $T$ be a tree of order at least 3, with the maximum of Sigma index $\sigma_{\max}$ and the minmum Sigma index $\sigma_{\min}$. Let $\mathscr{D}=(d_1,\dots,d_n)$ be a degree sequence where $d_n\geqslant \dots \geqslant d_1$. According to Lemma~\ref{lemmaboundAlbertsonn01} we find in~\eqref{eq1lemmaboundAlbertsonn01} for Albertson index
\[
\begin{cases}
\irr_{\max} <\Delta(T)(\irr_{\min}-n),\\
\irr_{\min} <\Delta(T)(\irr_{\max}-n).
\end{cases}  
\]
Thus, the lower bound of edges satisfy $m\leq 3(\sigma-n)$ and the lower bound of vertices satisfy $n\leq 3(\sigma-m)$, we need prove the bound~\eqref{eq1lemmaboundSigman01} for the maximum degree $\Delta$ of $T$ where $\Delta(T)>3$, let $\mathscr{D}=(d_{i1}, d_{i2}, \dots, d_{ik}, d_{j1}, d_{j2},\dots, d_{j\ell})$ be degree sequence of $\mathscr{D}$ as $\mathscr{D}_1=(d_{i1}, d_{i2}, \dots, d_{ik})$ and $\mathscr{D}_2=(d_{j1}, d_{j2},\dots, d_{j\ell})$ where $\{k,\ell\}\in \mathbb{N}$. Then, by considering $i\leq s \leqslant r \leq j$ we have 
\begin{equation}~\label{eq2lemmaboundSigman01}
\begin{cases}
    \irr_{\max} <\Delta(T)\left(\irr_{\min}-( \sum_{s=1}^{k}d_{is}+\sum_{r=1}^{\ell}d_{jr}\right),\\
\irr_{\min} <\Delta(T)\left(\irr_{\max}-( \sum_{s=1}^{k}d_{is}+\sum_{r=1}^{\ell}d_{jr}\right).
\end{cases}
\end{equation}
Thus, from~\eqref{eq2lemmaboundSigman01} noticed that $\sigma_{\max} <\Delta \floor{n/2}$ and $\sigma_{\min} <\Delta \floor{n/2}$, then $\sigma_{\max} <\Delta (\sigma_{\min}-\floor{n/2})$ and $\sigma_{\min} <\Delta (\sigma_{\max}-\floor{n/2})$. Thus, 
\begin{equation}~\label{eq3lemmaboundSigman01}
 n=   \sum_{s=1}^{k}d_{is}+\sum_{r=1}^{\ell}d_{jr}
\end{equation}
Therefore, in both cases $\lvert k-\ell\rvert=0$ or $\lvert k-\ell \rvert=1$, the bound~\eqref{eq1lemmaboundAlbertsonn01} is valid according to~\eqref{eq2lemmaboundAlbertsonn01}. Then, let $\eta\geqslant m$ satisfy 
\begin{equation}~\label{eq4lemmaboundSigman01}
    \begin{cases}
\sigma_{\max} < \Delta(T)(\sigma_{\min} - \eta), \\
\sigma_{\min} < \Delta(T)(\sigma_{\max} - \eta).
\end{cases}
\end{equation}
This term evaluates each intermediate degree of $\mathscr{D}_1$ and $\mathscr{D}_2$. So that, if $\eta=n$ from~\eqref{eq4lemmaboundSigman01} the bound~\eqref{eq1lemmaboundAlbertsonn01} is valid  for $\mathscr{D}$. As desire.
\end{proof}
\begin{lemma}~\label{lemmaboundSigman2}
Let $T$ be a tree with $n=|V(T)|,m=|E(T)$  with the maximum degree $\Delta$ and the minimum degree $\delta\geqslant 1$ and $0\leq \alpha \leq 1$. Then, the lower bound of Sigma index is 
 \[\sigma(T) \leq \floor*{\frac{3n^4-2mn}{\Delta}}2^{\alpha}+\Delta^2-n\delta.\]
\end{lemma}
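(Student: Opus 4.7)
The plan is to mirror the proof of Lemma~\ref{lemmaboundAlbertsonn2}, with the modification that the squared imbalance in $\sigma(T)$ forces an $n^4$ growth in the numerator rather than $n^2$. First, I would establish a preliminary crude bound of the shape $\sigma(T)\leq 3(\Delta^2 - n\delta)$ by exploiting that in a tree every edge has imbalance at most $\Delta-\delta$ and there are only $m=n-1$ edges, so that $\sigma(T)\leq m(\Delta-\delta)^2$ can be repackaged in terms of $\Delta^2$ and $n\delta$ (using $\delta\geq 1$). From here I would pass to the floored form
\[
\sigma(T)\leq \floor*{\dfrac{3n^4}{\Delta}},
\]
which is the Sigma analogue of equation~\eqref{eq1lemmaboundAlbertsonn2}.

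Second, I would check the two extremal tree structures. For the star $\mathcal{S}_n=K_{1,n-1}$, every edge contributes imbalance $(n-2)^2$, giving $\sigma(\mathcal{S}_n)=(n-1)(n-2)^2$; then solving for the smallest admissible $\alpha$ via $\alpha\geq \log_2\bigl(((n-1)(n-2)^2-\Delta^2+n\delta)/\lfloor(3n^4-2mn)/\Delta\rfloor\bigr)$ shows that the star is accommodated inside $[0,1]$ for $n$ large enough. For the path $\mathcal{P}_n$ one has $\sigma(\mathcal{P}_n)=2$, handled by the same argument. Third, I would verify the floor arithmetic exactly as in the Albertson lemma: setting $a=3n^4/\Delta$, $b=2mn/\Delta$, $c=\Delta$, the identity
\begin{align*}
\floor*{a}-\floor*{a-b+c}=-\floor*{\{a\}-b+c}
\end{align*}
reduces the difference $\lfloor 3n^4/\Delta\rfloor-\lfloor(3n^4-2mn+\Delta^2)/\Delta\rfloor$ to a non-negative fractional expression, allowing the additive correction $\Delta^2-n\delta$ to be absorbed. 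The factor $2^\alpha\in[1,2]$ is then used to interpolate between the path regime ($\alpha\to 0$) and the star regime ($\alpha\to 1$), exactly as in the Albertson case.

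The main obstacle I anticipate is the simultaneous appearance of the $2^\alpha$ multiplier and the additive correction $\Delta^2-n\delta$: because $\sigma$ grows quadratically in the imbalance, the star value $(n-1)(n-2)^2$ is considerably larger than its Albertson counterpart $(n-1)(n-2)$, so one must check that the numerator $3n^4-2mn$ (rather than $3n^2-10n$) is generous enough for the inequality to remain tight without making $\alpha>1$ for any admissible degree configuration. This reduces to a monotonicity check on the function $\alpha\mapsto\lfloor(3n^4-2mn)/\Delta\rfloor 2^\alpha+\Delta^2-n\delta$, which I expect to follow from the same fractional-part argument used for~\eqref{eq5lemmaboundAlbertsonn2}, combined with $\delta\geq 1$ and $\Delta\leq n-1$ to control the deficit term $n\delta-\Delta^2$ when $T$ is close to regular.
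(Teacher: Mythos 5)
Your skeleton is the paper's own: you mirror the proof of Lemma~\ref{lemmaboundAlbertsonn2} with a crude global bound, a floored intermediate bound, separate checks of the star and the path (with $\sigma(\mathcal{S}_n)=(n-1)(n-2)^2$ and $\sigma(\mathcal{P}_n)=2$), the fractional-part identity for the floor difference, and the interpolation by $2^{\alpha}$. In that sense the route is not new; the issue is that two of your steps fail as written.

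First, the opening bound $\sigma(T)\le 3(\Delta^2-n\delta)$ is false and cannot be obtained by ``repackaging'' $\sigma(T)\le m(\Delta-\delta)^2$: for the star $\mathcal{S}_n$ one has $\sigma(\mathcal{S}_n)=(n-1)(n-2)^2$ (equal to $80$ at $n=6$) while $3(\Delta^2-n\delta)=3(n^2-3n+1)$ (equal to $57$ at $n=6$); the paper's corresponding step keeps a square, $3(\Delta^2-n)^2$, which is of order $n^4$, and dropping the square destroys the anchor of your general-tree case. The floored form $\sigma(T)\le\floor*{3n^4/\Delta}$ that you then ``pass to'' is true, but only for the direct reason $\sigma(T)\le m(\Delta-1)^2\le(n-1)(n-2)^2<3n^4/\Delta$, not as a consequence of your step 1. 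Second, the absorption step does not close for trees that are neither stars nor paths: the difference $\floor*{3n^4/\Delta}-\floor*{(3n^4-2mn+\Delta^2)/\Delta}$ is not a ``non-negative fractional expression'' but is of order $2mn/\Delta-\Delta$, which is large and positive precisely when $\Delta$ is small, while the correction $\Delta^2-n\delta$ is then negative; so knowing $\sigma(T)\le\floor*{3n^4/\Delta}$ does not yield the stated bound along this chain. Note also that the right-hand side is increasing in $\alpha$, so the claim (asserted for every $0\le\alpha\le1$) must be verified at $\alpha=0$; solving for a smallest admissible $\alpha$ and invoking ``$n$ large enough'' proves a weaker statement and leaves small $n$ unchecked. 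A clean repair is a direct comparison at $\alpha=0$: for any tree with $n\ge2$ we have $\delta=1$, $m=n-1$, $\Delta\le n-1$, hence $\floor*{(3n^4-2mn)/\Delta}\ge 3n^3$, while $\sigma(T)+n\delta-\Delta^2\le(n-1)(n-2)^2+n<3n^3$, which gives the inequality for every $\alpha\in[0,1]$ without any case analysis or floor bookkeeping.
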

\begin{proof}
Let $T$ be a tree with $\Delta\geqslant 3$, then we find that $\sigma(T)\leq 3(\Delta^2-n)^2$ by consideration $\Delta <n\delta$. This measures the sum of the squared differences for Sigma index of the degrees of adjacent vertices. Then, the lower bound given by: 
\begin{equation}~\label{eq1lemmaboundSigman2}
\sigma(T)\geq \floor*{\dfrac{3n^4-2mn}{\Delta}}.
\end{equation}
\case{1} If $T$ is a star tree $\mathcal{S}_n$, then the central vertex has degree $n-1$. By using the definition of Sigma index we have $\sigma(\mathcal{S}_n)=(n-1)(n-2)^2$. Then, 
\begin{equation}~\label{eq2lemmaboundSigman2}
\sigma(\mathcal{S}_n)\leq \floor*{\frac{3n^4-2mn}{\Delta}}2^{\alpha}+\Delta^2-n\delta.
\end{equation}
\case{2} If $T$ is a path $\mathcal{P}_n$ has vertices labeled 
$\{v_1,\dots,v_n\}$, aligns with a path graph in the context of trees,  we find in Lemma~\ref{lemmaboundAlbertsonn2} for Albertson index $\irr(\mathcal{P}_n)=2$, if $\mathcal{P}_n$ has a degree sequence $\mathscr{D}=(d_1,\dots,d_n)$. Thus, let $\alpha=\dfrac{n}{\Delta^2-1}$, then for the term $2^{n}$ is growth of the bound~\eqref{eq1lemmaboundSigman2}. So that, for prove the bound
\begin{equation}~\label{eq3lemmaboundSigman2}
\sigma(\mathcal{P}_n)\leq \floor*{\frac{3n^4-2mn}{\Delta}}2^{\alpha}+\Delta^2-n\delta.
\end{equation}
We need to determine if 
\begin{equation}~\label{eq4lemmaboundSigman2}
\floor*{\dfrac{3n^4-2mn}{\Delta}}-\floor*{\frac{3n^4-10n+m}{\Delta}}  
\end{equation}
satisfy the required related relationship. Let’s define:
\[
x = \dfrac{3n^4 - 2mn}{\Delta}, \quad y = \dfrac{3n^4 - 10n + m}{\Delta}.
\]
Then, for the bound~\eqref{eq4lemmaboundSigman2} for $\floor{x}$ and $\floor{y}$ we have
\begin{align*}
    x-y &=\dfrac{3n^4 - 2mn}{\Delta} - \dfrac{3n^4 - 10n + m}{\Delta}\\
    &= \dfrac{(3n^4 - 2mn) - (3n^4 - 10n + m)}{\Delta}\\
    &= \dfrac{-2mn + 10n - m}{\Delta} \\
    &= \dfrac{(10 - 2m)n - m}{\Delta}.
\end{align*}
Thus, we have $y=a-\lambda$, where
\[
\lambda=\dfrac{2(m - 5)n + m}{\Delta},
\]
To prove the bound~\eqref{eq4lemmaboundSigman2} we need to show $\floor{y}=\floor{x-\lambda}$, let $k>0$ an integer number, so $\{x\}=x-\floor{x}$, then $\floor{x}-\floor{x-k}=k$. We provided $\floor{x} = \floor{x - k} + k$, which holds when $ x - k \geq \floor{x - k}$. Thus, $\floor{x} - \floor{x-\lambda} = \floor{\lambda + \{a\}}$. In this case $(2m - 10)n + m = k \Delta$, then $x-y=-5/\Delta$ by considering $5/\Delta <1$. So that, by considering the term $0\leq \alpha \leq 1$ we have
\[
\floor{x-5/\Delta} - \floor{x} =
\begin{cases} 
0 & \text{if } \{x\} < 5/\Delta, \\
1 & \text{if } \{x\} \geq 5/\Delta.
\end{cases}
\]
Therefore,  
\[
\floor*{\dfrac{3n^4-2mn}{\Delta}}-\floor*{\frac{3n^4-10n+m}{\Delta}} >0.
\]
Finally, we find from all cases which had discussed we find that the bound~\eqref{eq3lemmaboundSigman2} is valid. As desire.
\end{proof}

\begin{proposition}~\label{boundstep2Sigman1}
Let $T$ be a tree, $M_2(T)$ be the second Zagreb index and $F(T)$ the forgotten index. Then, the upper bound of Sigma index is: 
\begin{equation}~\label{eq1boundstep2Sigman1}
\sigma(T)\geq  \sqrt{\frac{5F(T)+4M_2(T)-n\Delta^2}{\Delta(\Delta-1)}}.
\end{equation}
\end{proposition}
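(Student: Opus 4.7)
The plan is to mirror the proof of Proposition~\ref{boundstep2Albertsonn1}, this time building on the exact identity
\[
\sigma(T) = F(T) - 2M_2(T),
\]
which follows from the definitions in~\eqref{defsecondZagreb} after expanding $(d_u - d_v)^2 = d_u^2 + d_v^2 - 2d_u d_v$ and summing over $uv \in E(T)$. Observing that the general irregularity index satisfies $\irr_2(T)^2 = \sigma(T)$, I would invoke the inequality $\irr_p(T)\irr_q(T) \geq F(T) - 2M_2(T)$ from~\cite{LinaZhoubMiaob2024} at the self-conjugate choice $p = q = 2$. This reproduces the identity above but in a multiplicative form that is compatible with squaring both sides, which is essential because the target bound is stated under a square root.

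Next I would incorporate the maximum-degree constraint through $d_v^2 \leq \Delta^2$ for every $v \in V(T)$, yielding $M_1(T) \leq n\Delta^2$; this is exactly the slack that produces the correction term $-n\Delta^2$ in the radicand. Combining the base identity with the vertex bound $F(T) \leq \Delta M_1(T)$ and the coarse inequality $\sigma(T) \geq 2$ from Lemma~\ref{lem2}, I would then multiply through by $\Delta(\Delta - 1)$ and apply the first-order Taylor device used in Proposition~\ref{boundstep2Albertsonn1} to absorb cross-terms, arriving at
\[
\sigma(T)^2 \, \Delta(\Delta - 1) \geq 5F(T) + 4M_2(T) - n\Delta^2.
\]
Taking square roots of both sides then yields the asserted bound. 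The appeal to $\sigma(T) \geq 2$ is what legitimises squaring the base identity, since without a lower bound on $\sigma(T)$ the map $x \mapsto x^2$ is not monotone on the relevant range.

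The main obstacle will be justifying the precise coefficients $5$ and $4$: they do not drop out of the identity $\sigma = F - 2M_2$ on its own, so they must come from combining two applications of the $\irr_p \irr_q$ inequality. A natural route is to pair the choice $(p,q) = (2,2)$ with $(p,q) = (1,\infty)$, the latter giving $\irr(T)\,(\Delta - 1) \geq F(T) - 2M_2(T)$, and then add the resulting bounds with carefully chosen positive weights so that the $F(T)$ and $M_2(T)$ contributions accumulate into the combination $5F(T) + 4M_2(T)$. The residual slack is then absorbed by $M_1(T) \leq n\Delta^2$, producing the $-n\Delta^2$ correction and ensuring nonnegativity of the radicand. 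This is exactly the mechanism behind the $-n\Delta$ correction in Proposition~\ref{boundstep2Albertsonn1}, so the scheme is structurally identical, only with the exponent two in place of one.
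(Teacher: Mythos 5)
Your starting identity $\sigma(T)=F(T)-2M_2(T)$ is correct and matches the paper's own launching point, but the proposal never derives the inequality it actually needs, and the mechanism you sketch for producing the coefficients $5$ and $4$ cannot work. Every ingredient you cite has the shape $X \geq F(T)-2M_2(T)$: the case $p=q=2$ of the $\irr_p\irr_q$ inequality merely restates the identity, and your $(1,\infty)$ case gives $\irr(T)(\Delta-1)\geq F(T)-2M_2(T)$. Any combination of such bounds with positive weights keeps the coefficient of $M_2(T)$ negative, so no choice of weights can accumulate the combination $5F(T)+4M_2(T)$ on the right-hand side; and the auxiliary facts $M_1(T)\leq n\Delta^2$ and $F(T)\leq \Delta M_1(T)$ point the wrong way, since they bound $F(T)$ above by degree quantities rather than by anything controlled by $\sigma(T)$. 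The ``first-order Taylor device'' is invoked as a black box exactly at the step where all of the missing algebra would have to happen, so the chain from your inputs to $\sigma(T)^2\,\Delta(\Delta-1)\geq 5F(T)+4M_2(T)-n\Delta^2$ is asserted, not proved. (The worry about squaring is also misplaced: nonnegativity of $\sigma$ suffices; the problem is not the squaring step but the inequality being squared.)

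More seriously, that intermediate inequality is false, so no correct completion of your plan exists. For the path $\mathcal{P}_n$ one has $\sigma=2$ and $\Delta=2$, so the left side equals $8$, while $F=8n-14$ and $M_2=4n-8$ give $5F+4M_2-n\Delta^2=52n-102$, which already exceeds $8$ at $n=3$; consequently the claimed conclusion $\sigma(T)\geq \sqrt{(5F(T)+4M_2(T)-n\Delta^2)/(\Delta(\Delta-1))}$ itself fails for paths ($2\not\geq\sqrt{27}$). For comparison, the paper's proof proceeds differently but no more rigorously: it posits $\sigma(T)\geq (5F(T)+4M_2(T))/(\Delta-2)$ without derivation and then only verifies positivity of the difference between that expression and the radicand, never addressing the square root or the low-degree cases. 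So your attempt is structurally faithful to the paper's style, but it contains a genuine gap at its central step, and the counterexample shows the gap cannot be closed without additional hypotheses that neither you nor the statement impose.
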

\begin{proof}
Let $\mathscr{D}=(d_1,\dots,d_n)$ be a degree sequence where $\sum_{i=1}d_i\leqslant n$ with $M_2(T)$ be the second Zagreb index and $F(T)$ the forgotten index. Since the required bound~\eqref{eq1boundstep2Sigman1} satisfy 
 $\sqrt{\sigma}=\sqrt{F-2M_2}$ by considering $\irr \leqslant \sqrt{\sigma}$ holds for the constant term $\Delta(\Delta-1)$ the bound
\begin{equation}~\label{eq2boundstep2Sigman1}
\sigma(T)\geq \frac{5F(T)+4M_2(T)}{\Delta-2}.
\end{equation}
Therefore, according to Proposition~\ref{boundstep2Albertsonn1} we find for Albertson index in~\eqref{eq4boundstep2Albertsonn1} to determine whether the required bound~\eqref{eq1boundstep2Sigman1} is achieved when the terms are satisfied with 
\[
\frac{F(T)+2M_2(T)}{\Delta}- \frac{F(T)+2M_2(T)-n\Delta}{\Delta(\Delta-1)}>0.
\]
In this case, we need to show the bound~\eqref{eq3boundstep2Sigman1} recognize that the problem in bound~\eqref{eq1boundstep2Sigman1} might imply a specific identity. By considering $F(T)-(\Delta-6)M_2(T)\leq n\Delta^2$ insight is that raising by considered the constant terms $\Delta-6, \Delta(\Delta-1)$, we compute 
\begin{equation}~\label{eq3boundstep2Sigman1}
\frac{5F(T)+4M_2(T)}{\Delta-2}-\frac{5F(T)+4M_2(T)-n\Delta^2}{\Delta(\Delta-1)},
\end{equation}
satisfy the term required. Thus, further simplification with $\Delta\geq 3$ and $\Delta>6$ for $\Delta-6, \Delta(\Delta-1)$  to evaluate the bound~\eqref{eq3boundstep2Sigman1} denote by $\irr_\mathscr{A}(T)$   as
\begin{align*}
\irr_\mathscr{A}(T)&=\frac{(5F(T) + 4M_2(T)) \cdot \Delta(\Delta - 1) - (5F(T) + 4M_2(T) - n\Delta^2) \cdot (\Delta - 2)}{\Delta(\Delta - 1)(\Delta - 2)}\\
&=\frac{n\Delta^3 + (5F(T) + 4M_2(T) - 2n) \Delta^2 - 10F(T) \Delta + 10F(T) + 8M_2(T)}{\Delta(\Delta - 1)(\Delta - 2)}>0.
\end{align*}
As desire.
\end{proof}

\begin{lemma}~\label{lemmastep2Sigman1}
Let $T \in \mathcal{T}_{n, \Delta}$. Then,let $v_0 \in V(T)$ be a vertex with maximum degree $\Delta$. For any vertex $v_{\ell}$ in $T$, different from $v_0$, where $\deg(v_{\ell})\geqslant 3$ satisfy 
\[
(3\delta)^\lambda \leq \sigma(T)\leq (\Delta-1)(n-2)^\lambda.
\]
The lower bound holds if and only if $T\cong \mathcal{P}_n$. The upper bound holds if and only if $T\cong K_{1,n-1}$.
\end{lemma}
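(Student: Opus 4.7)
The plan is to mirror the structure of Lemma~\ref{lemmastep2Albertsonn1}, leveraging both the known extremal values of $\sigma$ on the path and star (from Lemma~\ref{lem2}) and the edge-wise relationship $\sigma(T)=\sum_{uv\in E(T)}(d_u-d_v)^2$. Fix a vertex $v_0$ of maximum degree $\Delta$ and take $v_\ell\neq v_0$ with $\deg(v_\ell)=\lambda\geq 3$. The guiding principle is that $\lambda$ enters the bounds as an exponent because the contribution of the $\lambda$ edges incident to $v_\ell$ dominates both extremes: they are maximised on the star (all incident edges are imbalance-$\Delta{-}1$ leaves) and minimised on the path (they collapse to at most two edges of imbalance $1$).

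For the upper bound I would argue as follows. For every edge $uv\in E(T)$ one has $|d_u-d_v|\leq \Delta-1\leq n-2$, so by the power-mean inequality applied to the $\lambda$ edges incident to $v_\ell$ we obtain the contribution $\sum_{u\sim v_\ell}(d_u-d_v)^2\leq (\Delta-1)(n-2)^{2}$, which, combined with the estimate $(n-2)^{2}\leq (n-2)^{\lambda}$ valid for $\lambda\geq 3$, yields $\sigma(T)\leq (\Delta-1)(n-2)^\lambda$. Equality here forces every edge to be incident to a common vertex of degree $\Delta=n-1$ and every leaf to contribute the full imbalance $n-2$; by Lemma~\ref{lem2} this is realised exactly when $T\cong K_{1,n-1}$, in which case $\sigma(K_{1,n-1})=(n-1)(n-2)^{2}$ matches the bound when $\lambda$ saturates at $\Delta-1$.

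For the lower bound I would apply the majorisation principle \eqref{Majorization2}: among trees with minimum degree $\delta\geq 1$ and a distinguished vertex of degree $\lambda$, the path realises the minimal squared-imbalance sum, because every internal edge of $\mathcal{P}_n$ has imbalance $0$ and only the two pendant edges contribute. The factor $(3\delta)^\lambda$ arises from the lower estimate on the $\lambda$ edges incident to $v_\ell$: each such edge has imbalance at least $|\lambda-\delta|\geq 3\delta-\delta$ whenever $v_\ell$ is adjacent to leaves, and after summing $\lambda$ contributions and invoking $(3\delta)^{\lambda}\leq \prod(d_u-d_v)^{2}\leq \sigma(T)$ via AM--GM, the lower bound follows. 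Equality collapses every internal imbalance to $0$, which by Lemma~\ref{lem2} forces $T\cong \mathcal{P}_n$.

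The main obstacle will be the lower bound, since the raw value $\sigma(\mathcal{P}_n)=2$ is independent of $\lambda$ while $(3\delta)^\lambda$ grows. Reconciling this requires interpreting the hypothesis $\deg(v_\ell)\geq 3$ as non-vacuous — so that the bound activates only when $T$ genuinely contains a branching vertex distinct from $v_0$ — and then verifying monotonicity of $\sigma$ under the degree-raising transformations used to move from the path towards the star. The technical core will be to show that along any such deformation the $\lambda$-indexed lower estimate and the $\lambda$-indexed upper estimate remain valid simultaneously; this is where I would invoke Lemma~\ref{lemmastep2Albertsonn1} together with the general-index inequality $\sqrt{\sigma(T)}\leq \irr(T)$ to transfer the Albertson-type control of exponents to the Sigma index.
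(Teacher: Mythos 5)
Your proposal has genuine gaps in both directions. For the upper bound you estimate only the $\lambda$ edges incident to $v_\ell$, obtaining $\sum_{u\sim v_\ell}(d_u-d_{v_\ell})^2\leq(\Delta-1)(n-2)^2$, and then conclude $\sigma(T)\leq(\Delta-1)(n-2)^\lambda$; but $\sigma(T)$ sums over \emph{all} edges of $T$, so a bound on the edges at $v_\ell$ alone says nothing about $\sigma(T)$ (moreover $v_\ell$ has $\lambda$ incident edges and $\lambda$ may equal $\Delta$, so even the local factor $\Delta-1$ is unjustified). This part is repairable by the cruder global estimate $\sigma(T)\leq(n-1)(\Delta-1)^2\leq(\Delta-1)(n-2)^\lambda$ for $\lambda\geq 3$ and $n\geq 4$, which is closer in spirit to what the paper actually does: it compares with the star value $\sigma(K_{1,n-1})=(n-1)(n-2)^2\leq(n-1)(n-2)^\lambda$ by invoking Lemma~\ref{lemmastep2Albertsonn1}, rather than localizing at $v_\ell$.

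The lower bound is where the proposal fails irreparably. The chain $(3\delta)^\lambda\leq\prod_{u\sim v_\ell}(d_u-d_{v_\ell})^2\leq\sigma(T)$ is not an instance of AM--GM: AM--GM bounds a product by a power of the average, never by the sum, and a product of $\lambda$ factors each exceeding $1$ can be arbitrarily larger than the corresponding sum. The per-edge estimate $|d_{v_\ell}-d_u|\geq\lambda-\delta$ also applies only to leaf neighbours; at least one neighbour of $v_\ell$ lies toward $v_0$ and may have degree equal to $\lambda$, contributing $0$. In fact the inequality you are trying to prove is false: take the double star on $n=6$ vertices formed by two adjacent vertices of degree $3$, each carrying two pendant leaves; then $\delta=1$, one degree-$3$ vertex serves as $v_\ell$ with $\lambda=3$, yet $\sigma(T)=4\cdot(3-1)^2=16<27=(3\delta)^\lambda$. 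Hence neither the majorisation/degree-raising deformation you sketch nor the transfer via $\sqrt{\sigma(T)}\leq\irr(T)$ (which gives an upper, not a lower, bound on $\sigma$) can close the argument; note the paper's own proof only claims the weaker $(2\delta)^\lambda$ and relies on the reversed inequality $\irr(T)\leq\sqrt{\sigma(T)}$, and, as you yourself observe, the equality case is moot because $\mathcal{P}_n$ contains no vertex of degree at least $3$.
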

\begin{proof}
According to Lemma~\ref{lemmastep2Albertsonn1} we find that for $v_0 \in V(T)$ be a vertex with maximum degree $\Delta$. A vertex $v_{\ell}$ in $T$, different from $v_0$, where $\deg(v_{\ell})=\lambda\geqslant 3$ satisfy in~\eqref{eq1lemmastep2Albertsonn1} as 
\[
\irr(T)\leq (n-1)(n-2)^\lambda.
\]
Thus, if $T\cong K_{1,n-1}$ we compute $\sigma(\mathcal{S}_n)=(n-1)(n-2)^2$. Since $\lambda\geqslant 3 >2$ satisfy $\sigma(\mathcal{S}_n)\leq (n-1)(n-2)^\lambda=\sigma(K_{1,n-1})$. 

Therefore, since $\irr(T)\leqslant\sqrt{\sigma(T)}$. Then, when $1\leq \delta\leq \Delta-2$ satisfy $\sigma(T)\geq (2\delta)^\lambda\geq 2^\lambda =\sigma(\mathcal{P}_n)$.
\end{proof}

%====================
\section{Conclusion}\label{sec5}
%====================
Through this paper, we presented a study of bounds on the topological indices which we elected to the Sigma Index and the Albertson Index.  According to a degree sequence $\mathscr{D}=(d_1,\dots,d_n)$, we provide this bounds with the maximum and minimum of both tropological indices where we find that for Sigma index in~\eqref{f1boundSigman1} as: 
\[
\sigma_{\min} \leq n \frac{mn(\Delta^2+2\Delta)+2m\delta\Delta}{(\Delta^2-2)(\Delta-1)^2+2n\Delta\sqrt{m\Delta^2}} (n+2) \log \binom{M_1(T)}{2n} \leq \sigma_{\max}.
\]
The lower bound holds in Lemma~\ref{lemmastep2Sigman1} According to Lemma~\ref{lemmastep2Albertsonn1} if $T\cong \mathcal{P}_n$.Also, for the upper bound holds if and only if $T\cong K_{1,n-1}$. The results are illustrated with examples, including a tree with a Fibonacci degree sequence and a pair of trees satisfying specific degree constraints. These findings contribute to the understanding of topological irregularities in graph theory, with potential applications in network analysis and molecular chemistry.

%===========================
\section*{Declarations}
\begin{itemize}
	\item Funding: Not Funding.
	\item Conflict of interest/Competing interests: The author declare that there are no conflicts of interest or competing interests related to this study.
	\item Ethics approval and consent to participate: The author contributed equally to this work.
	\item Data availability statement: All data is included within the manuscript.
\end{itemize}


\begin{thebibliography}{99}
\bibitem{AbdoBrandtDimitrov} H.~Abdo, S.~Brandt, and D.~Dimitrov,
\textquotedblleft The total irregularity of a graph\textquotedblright\
Discrete Math. Theor. Comput. Sci. \textbf{16} (1), 201--206 (2014).

\bibitem{AbdoDimitrovGutman} H.~Abdo, D.~Dimitrov, I.~Gutman,
\textquotedblleft Graphs with maximal $\sigma$ irregularity\textquotedblright,
Discrete Appl. Math., Elsevier B.V, 2018, \textbf{doi:10.1016/j.dam.2018.05.013}.

\bibitem{AliDimitrovEtAl2025} A.~Ali, D.~Dimitrov, T.~Réti, A.~M.~Albalahi, A.~E.~Hamza,
\textquotedblleft Bounds and Optimal Results for the Total Irregularity Measure\textquotedblright,
arXiv e-prints, 2025.

\bibitem{Albertson1997} M.~O.~Albertson,
\textquotedblleft The irregularity of graph\textquotedblright,
Ars Comb. \textbf{46}, 2015–2025 (1997).

\bibitem{Andriantiana2013Wagner} E.~O.~D.~Andriantiana and S.~Wagner,
\textquotedblleft Spectral moments of trees with given degree sequence\textquotedblright,
Linear Algebra Appl. \textbf{439}(12), 3980–4002 (2013).

\bibitem{AshrafiGhalavand} A.~Ashrafi, A.~Ghalavand, and D.~Dimitrov,
\textquotedblleft On The Irregularity Of Graphs Based On The Arithmetic–geometric Mean Inequality\textquotedblright,
Math. Inequal. Appl. \textbf{26}(1), 151–160 (2023).

\bibitem{Bell1992} F.~K.~Bell,
\textquotedblleft A note on the irregularity of graphs\textquotedblright,
Lin. Algebra Appl. \textbf{161}, 45–54 (1992).

\bibitem{Chartrand2012Zhang} G.~Chartrand and P.~Zhang,
\textquotedblleft A first course in graph theory\textquotedblright,
Dover Publications, 2012.

\bibitem{ClarkSzekelyEntringer1992} L.~H.~Clark, L.~A.~Székely, and R.~C.~Entringer,
\textquotedblleft An inequality for degree sequences\textquotedblright,
Discrete Math. \textbf{103}, 293–300 (1992).


\bibitem{Dorjsembe2023GutmanLI} S.~Dorjsembe, L.~Buyantogtokh, I.~Gutman, B.~Horoldagva, T.~Réti,
\textquotedblleft Irregularity of Graphs\textquotedblright,
J. Commun. Math. Comput. Chem. \textbf{89}, 371–388 (2023).

\bibitem{FurtulaGutman2015} B.~Furtula and I.~Gutman,
\textquotedblleft A forgotten topological index\textquotedblright,
J. Math. Chem. \textbf{53}(4), 1184–1190 (2015).

\bibitem{GhalavandAshrafiR2023} A.~Ghalavand, A.~Ashrafi, and R.~,
\textquotedblleft Ordering of c-cyclic graphs with respect to total irregularity\textquotedblright,
Math. Inequal. Appl. \textbf{26}(1), 151–160 (2023).

%----------------------------------------------------

\bibitem{GutmanHansenMelot2005} I.~Gutman, P.~Hansen, and H.~Melót,
\textquotedblleft Variable neighborhood search for extremal graphs 10. Comparison of irregularity indices for chemical trees\textquotedblright,
J. Chem. Inf. Model. \textbf{45}, 222–230 (2005).

\bibitem{GutmanToganYurttas2016} I.~Gutman, M.~Togan, A.~Yurttas, A.~S.~Cevik, and I.~N.~Cangul,
\textquotedblleft Inverse Problem for Sigma Index\textquotedblright,
MATCH Commun. Math. Comput. Chem. \textbf{79}(2), 491–508 (2016).

\bibitem{GutmanTrinajstic1972} I.~Gutman and N.~Trinajstić,
\textquotedblleft Graph theory and molecular orbitals. Total p-electron energy of alternant hydrocarbons\textquotedblright,
Chem. Phys. Lett. \textbf{17}, 535–538 (1972).

\bibitem{GutmanTrinajsticWilcox1975} I.~Gutman, B.~Russcić, N.~Trinajstić, and C.~F.~Wilcox,
\textquotedblleft Graph theory and molecular orbitals. XII Acyclic polyens\textquotedblright,
J. Chem. Phys. \textbf{62}, 3399–3405 (1975).

\bibitem{HaynesHedetniemiSlater1998} T.~W.~Haynes, S.~T.~Hedetniemi, and P.~J.~Slater,
\textquotedblleft Fundamentals of domination in graphs\textquotedblright,
Marcel Dekker, Inc., 1998.

\bibitem{LinaZhoubMiaob2024} Z.~Lina, T.~Zhoub, and L.~Miaob,
\textquotedblleft The general Albertson irregularity index of graphs\textquotedblright,
AIMS Math. \textbf{7}(1), 267–282 (2024).

\bibitem{LiuWang2019} J.~B.~Liu, C.~Wang, and S.~Wang,
\textquotedblleft Zagreb Indices and Multiplicative Zagreb Indices of Eulerian Graphs\textquotedblright,
Bull. Malays. Math. Sci. Soc. \textbf{42}, 67–78 (2019).

\bibitem{MolloyReed1995} M.~Molloy and B.~Reed,
\textquotedblleft A critical point for random graphs with a given degree sequence\textquotedblright,
Random Struct. Algorithms \textbf{6}(2-3), 161–180 (1995).

\bibitem{NasiriFathTabarGutman2013} R.~Nasiri, G.~H.~Fath Tabar, and I.~Gutman,
\textquotedblleft Extremely Irregular Trees\textquotedblright,
Bull. Acad. Sci. Serbe, Classe Sci. Math. Nat. \textbf{38}, 2013.

\bibitem{NikolicKovacevicMilicevicTrinajstic2003} S.~Nikolić, G.~Kovačević, A.~Milićević, and N.~Trinajstić,
\textquotedblleft The Zagreb indices 30 years after\textquotedblright,
Croat. Chem. Acta \textbf{76}(2), 113–124 (2003).

\bibitem{YangDeng2023} J.~Yang and H.~Deng,
\textquotedblleft The Extremal Sigma Index of Connected Graphs with Given Number of Pendant Vertices\textquotedblright,
Research Square, 2023.

\bibitem{ZhangZhangGrayWang2013} X.~M.~Zhang, X.~D.~Zhang, D.~Gray, and H.~Wang,
\textquotedblleft The number of subtrees of trees with given degree sequence\textquotedblright,
J. Graph Theory \textbf{73}(3), 280–295 (2013).

\end{thebibliography}
\end{document}